\newtheorem{theorem}{Theorem}
\newtheorem{lemma}{Lemma}[section]
\newtheorem{proposition}{Proposition}[section]
\newtheorem{corollary}{Corollary}[section]
\newtheorem{acknowledgment*}{Acknowledgment}
\newtheorem{remark}{Remark}[section]
\numberwithin{equation}{section}
\newcommand{\rlemma}[1]{Lemma~\ref{#1}}
\newcommand{\rth}[1]{Theorem~\ref{#1}}
\newcommand{\rcor}[1]{Corollary~\ref{#1}}
\newcommand{\rprop}[1]{Proposition~\ref{#1}}
\newcommand{\e}{\varepsilon}
\newcommand{\R}{\mathbb R}
\newcommand{\C}{\mathbb C}
\newcommand{\OO}{\mathcal O}
\begin{document}
\bibliographystyle{siam}
\title{Radially symmetric minimizers for a $p$-Ginzburg Landau type energy in $\R^2$} \date{} \author{Yaniv
  Almog\thanks{Department of Mathematics, Louisiana State University,
    Baton Rouge, LA 70803, USA}, Leonid Berlyand\thanks{Department of
    Mathematics, Pennsylvania State University, University Park, PA
    16802, USA}, Dmitry Golovaty\thanks{Department of Theoretical and
    Applied Mathematics, The University of Akron, Akron, Ohio 44325,
    USA} and Itai Shafrir\thanks{Department of Mathematics, Technion -
    Israel Institute of Technology, 32000 Haifa, Israel}}
\maketitle
\begin{abstract}
  We consider the minimization of a p-Ginzburg-Landau energy functional
  over the  class of radially symmetric functions of degree one. We
  prove the existence 
  of a unique  minimizer in this class, and show that its modulus
  is monotone increasing and concave. We also study the
  asymptotic limit  of the minimizers as $p\to\infty$. Finally, we prove that
  the radially symmetric solution is
  locally stable for $2<p\leq4$.
\end{abstract}
\section{Introduction}
\label{sec:1}
Given $p>2$ consider the minimization problem of the energy functional
 \begin{equation}
\label{eq:38}
E_p(u)=\int_{\R^2}|\nabla u|^p+\frac{1}{2}(1-|u|^2)^2
\end{equation}
 over the class of maps $u\in W^{1,p}_{loc}(\R^2,\R^2)$ that satisfy
$E_p(u)<\infty$ and have a degree $d$ ``at infinity''. In our previous work
\cite{aletal09}
it was shown that the notion of degree at infinity is well-defined. Hence,
minimization over the homotopy class of maps with degree $d$ is a
sensible task.
Moreover, in the case of degree $d=1$ we proved that a minimizer does
exist. An important 
open question is whether  any minimizer $u$ is necessarily radially
symmetric, i.e., $u=f(r)e^{i\theta}$ for some function $f(r)$ satisfying
$f(0)=0$ (thanks to invariance with respect to translations we may
assume that $u(0)=0$). We show in the sequel that a (unique) minimizer
{\em within the radially symmetric class} $u_p=f_p(r)e^{i\theta}$ exists. We
were, however, unable to determine whether $u_p$ is a minimizer or
not. As a preliminary step towards establishing the minimality
properties of $u_p$, we study in the present paper its {\em stability} properties. 
One of
our main results (see Theorem~\ref{th:stable} below) establishes that
$u_p$ is indeed stable if $p\in(2,4]$. We conjecture that this result
remains valid for any $p>2$. It should be mentioned that the
analogous stability problem for $p=2$ on the disc $B_1(0)$  with the
boundary condition 
$u(z)=\frac{z}{|z|}$  on $\partial B_1(0)$ was solved by Mironescu
\cite{mi95} and in a weaker form, by Lieb and Loss \cite{liebloss94}. Going back to the
problem on $\R^2$, but again for $p=2$, we recall that  the $L^2$-
stability of the radially symmetric solution was proved by Ovchinnikov
and Sigal \cite{ovsi97} and in a more natural energy space by
 del Pino, Felmer and Kowalczyk \cite{deetal04}.
However,  
Mironescu \cite{mi96}  showed a stronger result, namely, that the
radially symmetric solution is the unique (up to rotations and
translations) {\em local
  minimizer} on $\R^2$, that is, on every disc $B_R(0)$ it is
minimizing for its boundary values on $\partial B_R(0)$. Note that for $p=2$
(in contrast with $p>2$) only the
notion of local minimizer makes sense since the admissible
maps have infinite energy.

The manuscript is organized as
follows. In Section~\ref{sec:3} we establish existence and uniqueness
of the minimizer $u_p=f_p(r)e^{i\theta}$ in the radially symmetric class, as well as its
regularity. We also show that  $f_p$ is increasing and concave and
obtain some precise estimates for $f_p(r)$ for large values of $r$. In
Section~\ref{sec:large} we study the limit of $f_p$ as $p$ tends to
infinity. We show that $\lim_{p\to\infty} f_p=f_\infty$ is the piecewise linear
function given by $\frac{r}{\sqrt{2}}$ for $r<\sqrt{2}$ and is
identically equal to $1$ for $r\geq\sqrt{2}$.
 Finally, Section~\ref{sec:4} is devoted to the study of the stability
 of the radially symmetric solution.
\section{Radially symmetric solutions}
 \label{sec:3}

 In this section we consider some of the properties of the
 minimizer of
 \begin{equation}
\label{eq:29}
   I_p(f) = \int_0^\infty \left\{ \Big[ (f^\prime)^2+\frac{f^2}{r^2}\Big]^{p/2} +
     \frac{1}{2}(1-f^2)^2 \right\} rdr
 \end{equation}
for any $p>2$. Note that $I_p(f)=\frac{1}{2\pi}E(u)$ where $u=f(r)e^{i\theta}$.

\subsection{Existence}
\label{sec:3.1}
For each $p>2$ we define the space
\begin{equation}
  \label{eq:37}
  X_p=\Big\{f\in W^{1,p}_{\text{loc}}(0,\infty)\,:\, \int_0^\infty \big(
  |f'|^2+\frac{f^2}{r^2}\big)^{p/2}\,rdr<\infty\Big\}\,.  
\end{equation}
Existence of a solution will be established by minimization of
$I_p(f)$ over $X_p$. Note that $X_p\subset C^\alpha_{\text{loc}}[0,\infty)$, with
$\alpha=1-2/p$,  since whenever $f\in X_p$,  the function
$F(x_1,x_2)=f(\sqrt{x_1^2+x_2^2})$ belongs to
$W^{1,p}_{\text{loc}}(\R^2)$, and then we can apply  Morrey's theorem.
Furthermore, for every $f\in X_p$ we must have have $f(0)=0$. This
follows from the continuity of $f$ and the fact that 
$$
\int_0^1 \frac{|f|^p}{r^{p-1}}<\infty\,.
$$ 
\begin{proposition}
  \label{prop:existence}
The minimum of $I_p(f)$ over $X_p$ is attained by a function $f_p\in
X_p$ satisfying $0\leq f_p(r)\leq 1,\,\forall r\in[0,\infty)$. 
\end{proposition}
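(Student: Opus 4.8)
The plan is the standard direct method of the calculus of variations, localized on annuli because of the unbounded domain and the weight $r\,dr$. First I would check that $m:=\inf_{X_p}I_p$ is finite and nonnegative: the Lipschitz function $f(r)=\min(r,1)$ lies in $X_p$, and a direct computation, using $p>2$ for integrability of $r^{1-p}$ near $r=\infty$, gives $I_p(\min(r,1))<\infty$; since $I_p\ge 0$ we get $0\le m<\infty$. Fix a minimizing sequence $(f_n)\subset X_p$, $I_p(f_n)\to m$.

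The next step is a truncation reduction allowing us to assume $0\le f_n\le 1$. Replacing $f_n$ by $\widetilde f_n:=\min(|f_n|,1)$ does not increase $I_p$: passing from $f_n$ to $|f_n|$ leaves $|f_n'|$ and $f_n^2$ unchanged, hence leaves both terms of the integrand unchanged; the further truncation at level $1$ does not increase $|f_n'|$ nor $f_n^2$, so it does not increase $\big(|f_n'|^2+f_n^2/r^2\big)^{p/2}$, and it can only decrease $(1-f_n^2)^2$ (it is unchanged where $|f_n|<1$ and drops to $0$ where $|f_n|\ge 1$). Since also $\widetilde f_n\in X_p$, the sequence $(\widetilde f_n)$ is still minimizing; relabel it $(f_n)$, so now $0\le f_n\le1$. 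From $I_p(f_n)\le m+1$ we obtain $\int_0^\infty\big(|f_n'|^2+f_n^2/r^2\big)^{p/2}r\,dr\le C$; in particular, for each $0<\delta<R<\infty$, $\|f_n\|_{W^{1,p}(\delta,R)}\le C_{\delta,R}$ (here $r$ is bounded above and below). A diagonal extraction over an exhaustion $[1/k,k]$ then yields a subsequence (not relabeled) and a function $f\in W^{1,p}_{\mathrm{loc}}(0,\infty)$ with $f_n\weakly f$ in $W^{1,p}(\delta,R)$ and, by the compact $1$-D embedding, $f_n\to f$ uniformly on $[\delta,R]$ for every such $\delta,R$; hence $0\le f\le 1$.

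For lower semicontinuity, fix $0<\delta<R$. Since $(a,b)\mapsto(a^2+b^2)^{p/2}=|(a,b)|^p$ is convex on $\R^2$, the integrand $(a,\xi)\mapsto(a^2+\xi^2/r^2)^{p/2}\,r$ is convex in $(a,\xi)$ for each fixed $r>0$; combined with $f_n\to f$ strongly in $L^p(\delta,R)$ and $f_n'\weakly f'$ in $L^p(\delta,R)$, the classical Tonelli--Serrin lower semicontinuity theorem gives
$$\int_\delta^R\big(|f'|^2+f^2/r^2\big)^{p/2}r\,dr\le\liminf_{n\to\infty}\int_\delta^R\big(|f_n'|^2+f_n^2/r^2\big)^{p/2}r\,dr\le m.$$
Letting $\delta\to0^+$ and $R\to\infty$ and applying monotone convergence yields $\int_0^\infty\big(|f'|^2+f^2/r^2\big)^{p/2}r\,dr\le m<\infty$, so $f\in X_p$. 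For the potential term, $(1-f_n^2)^2\to(1-f^2)^2$ a.e., and Fatou gives $\int_0^\infty\frac12(1-f^2)^2r\,dr\le\liminf_n\int_0^\infty\frac12(1-f_n^2)^2r\,dr$. Adding the two estimates, $I_p(f)\le\liminf_n I_p(f_n)=m$, hence $I_p(f)=m$; setting $f_p:=f$ gives the asserted minimizer, with $0\le f_p\le1$ by construction.

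\textbf{Main obstacle.} The delicate point is that $X_p$ is not a single reflexive Banach space on which one can apply weak compactness directly; the argument must be carried out on annuli $[\delta,R]$ and then the bounds transported to $(0,\infty)$ via monotone convergence and Fatou. One must also verify that the truncation producing $0\le f_n\le1$ is compatible with the singular term $f^2/r^2$ (it is, as noted), and state carefully why the localized lower-semicontinuity theorem applies, namely convexity of the integrand in the pair (value, derivative) together with the strong $L^p_{\mathrm{loc}}$ convergence of $f_n$ coming from the compact $1$-D Sobolev embedding.
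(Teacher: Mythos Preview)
Your proof is correct and follows essentially the same approach as the paper: both use the direct method with a minimizing sequence, weak $W^{1,p}$-compactness on compact subintervals of $(0,\infty)$ via a diagonal argument, lower semicontinuity from convexity of the Lagrangian, and the truncation $f\mapsto\min(|f|,1)$ to enforce $0\le f_p\le 1$. The only cosmetic differences are that you truncate the minimizing sequence before passing to the limit whereas the paper truncates afterward, and you separate the potential term via Fatou whereas the paper handles the full integrand at once; neither change is substantive.
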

\begin{proof}
  Put 
  \begin{equation}
\label{eq:94}
m_p=\inf_{f\in X_p} I_p(f)\,.
  \end{equation}
 We first note that $m_p<\infty$ since the function $g^*\in X_p$ defined
 by
 \begin{equation*}
   g^*(r)=
   \begin{cases}
     r & r\leq 1\,,\\
     1 & r>1\,,
   \end{cases}
 \end{equation*}
 verifies $I_p(g^*)<\infty$. Consider a minimizing sequence $\{g_m\}$ for
 \eqref{eq:38}, i.e.,
 \begin{equation*}
   \lim_{m\to\infty} I_p(g_m)=m_p\,.
 \end{equation*}
 By passing to a diagonal sequence we may assume that for any compact
 interval $[a,b]\subset(0,\infty)$ we have
 \begin{equation}
\label{eq:39}
   g_m\rightharpoonup g\text{ weakly in } W^{1,p}(a,b)\,.
 \end{equation}
Since the convexity of the Lagrangian 
\begin{equation*}
  L(P,Z,r)=\left\{ \Big(P^2+\frac{Z^2}{r^2}\Big)^{p/2} +
     \frac{1}{2}(1-Z^2)^2 \right\}r
\end{equation*}
 in the variable $P$ implies weak lower-semi-continuity of the
 functional  
 $I_p^{(a,b)}(f):=\int_a^bL(f',f,r)dr$ (see \cite[Theorem~1,Sec.~8.2]{ev98}),  we deduce from
 \eqref{eq:39} that
 \begin{equation}
\label{eq:40}
  I_p^{(a,b)}(g)\leq m_p\,.
 \end{equation}
 Since the interval $[a,b]$ is arbitrary, we conclude from
 \eqref{eq:40} that $g\in X_p$, $I_p(g)\leq m_p$, so that necessarily
 $I_p(g)=m_p$, and $g$ is a minimizer in \eqref{eq:38}. Since
 replacing $g$ by
 \begin{equation*}
   \tilde g(r)=\min(1,|g(r)|)\,,
 \end{equation*}
 gives a map $\tilde g\in X_p$ such that $I_p(\tilde g)\leq I_p(g)$ (with
 strict inequality, unless $|g|\leq 1$) we conclude that we
 may assume $0\leq g(r)\leq 1$ for all $r$, and the result follows for
 $f_p=g$.   
\end{proof}
The next lemma shows that $f$ is positive on $(0,\infty)$.
\begin{lemma}
\label{le1a}
  $f_p>0$ for all $r>0$.
\end{lemma}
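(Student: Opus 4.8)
\emph{Plan.} I would argue by contradiction. Suppose $f_p(r_0)=0$ for some $r_0>0$; the aim is to produce a competitor $g\in X_p$ with $I_p(g)<I_p(f_p)=m_p$, contradicting \rprop{prop:existence}. The basic obstruction is that one cannot simply replace $f_p$ by $\max(f_p,\delta)$: because of the weight $f^2/r^2$, any admissible function that is bounded away from $0$ near the origin has infinite energy. All modifications must therefore be confined to a compact subinterval of $(0,\infty)$, and the whole content of the proof is to arrange that such a localized modification strictly lowers the energy. I would distinguish two cases according to whether the zero set $Z:=\{r>0:f_p(r)=0\}$ has nonempty interior.

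\emph{Case 1: $f_p\equiv0$ on some interval $(c,d)\subset(0,\infty)$.} Take a fixed bump $\varphi\in C_c^\infty((c,d))$ with $\varphi\ge0$, $\varphi\not\equiv0$, and consider $g=f_p+t\varphi\in X_p$ for small $t>0$. Since $f_p$ and $f_p'$ vanish on $(c,d)$, an exact computation gives
\[
I_p(f_p+t\varphi)=m_p+t^p\int_c^d\Big((\varphi')^2+\tfrac{\varphi^2}{r^2}\Big)^{p/2}r\,dr-t^2\int_c^d\varphi^2 r\,dr+\tfrac12 t^4\int_c^d\varphi^4 r\,dr .
\]
Because $p>2$, the $t^p$ and $t^4$ terms are $o(t^2)$ as $t\to0^+$, so the right-hand side is $<m_p$ for $t$ small — the contradiction. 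The mechanism is that lifting $f_p$ off zero yields a linear-in-$t^2$ gain from the potential while the $p$-Dirichlet part costs only $t^p$.

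\emph{Case 2: $Z$ has empty interior.} Then $f_p\not\equiv0$ on $(0,r_0)$, and using finiteness of $\int(1-f_p^2)^2 r\,dr$ (which forbids $f_p<\tfrac12$ near infinity, hence a fortiori $f_p<\delta$) together with continuity, for every small $\delta\in(0,\tfrac12)$ the numbers
\[
a=a(\delta):=\sup\{r<r_0:f_p(r)\ge\delta\},\qquad b=b(\delta):=\inf\{r>r_0:f_p(r)\ge\delta\}
\]
satisfy $0<a<r_0<b<\infty$, $f_p(a)=f_p(b)=\delta$, and $f_p<\delta$ on $(a,b)$. Monotonicity in $\delta$ and the empty-interior hypothesis force $a(\delta)\uparrow r_0$ and $b(\delta)\downarrow r_0$ as $\delta\downarrow0$ (otherwise $f_p$ would vanish on a nondegenerate interval), so I may \emph{fix} $\delta$ small enough that $b<3a$. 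Then take $g:=\delta$ on $[a,b]$ and $g:=f_p$ elsewhere — this is continuous, lies in $X_p$, and satisfies $0\le g\le1$. On $[a,b]$ the potential term only decreases (since $f_p<\delta$ there), so
\[
I_p(g)-m_p\le\int_a^b\frac{\delta^p}{r^{p-1}}\,dr-\int_a^b\Big((f_p')^2+\tfrac{f_p^2}{r^2}\Big)^{p/2}r\,dr .
\]
For the first integral use $r^{1-p}\le a^{1-p}$; for the second use $(\,\cdot\,)^{p/2}\ge|f_p'|^p$, $r\ge a$, H\"older's inequality on $[a,b]$, and the total-variation bound $\int_a^b|f_p'|\ge|f_p(r_0)-f_p(a)|+|f_p(b)-f_p(r_0)|=2\delta$. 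This gives
\[
I_p(g)-m_p\le\frac{\delta^p}{\big((b-a)a\big)^{p-1}}\Big((b-a)^p-(2a)^p\Big)<0 ,
\]
since $b-a<2a$. Again a contradiction, and the lemma follows.

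The step I expect to be the crux is the quantitative estimate in Case 2: one must show that the unavoidable ``dip'' the minimizer makes in order to touch zero costs at least as much $p$-Dirichlet energy as the replacement by the constant $\delta$, which is why the length $b-a$ of the modified interval must be controlled against its distance $a$ from the origin. The remaining points — that $a(\delta),b(\delta)$ are well defined and converge to $r_0$, and the reduction in Case 1 — are routine but should be spelled out. Note that Case 2 uses $p>2$ only through the exponents and works uniformly for every $p>2$, consistently with the absence of any upper bound on $p$ in this lemma.
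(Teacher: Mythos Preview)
Your proof is correct, but it takes a different and somewhat more laborious route than the paper's.

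The paper's argument avoids any quantitative estimate in your Case~2. After showing (as you do) that $f_p\not\equiv0$ on $[0,r_0]$, it picks $\delta>0$ small and lets $J=(\alpha,\beta)$ be the connected component of $\{f_p<\delta\}$ containing $r_0$; the preliminary step guarantees $\alpha>0$. The key observation is that for each fixed $r\ge\alpha$ the function
\[
H_r(t)=\Big(\frac{t}{r}\Big)^p+\frac12(1-t^2)^2
\]
is \emph{decreasing} on $[0,\delta_1]$ for some $\delta_1>0$ (since $H_r'(t)=pt^{p-1}r^{-p}-2t(1-t^2)$ and $p>2$ makes the first term $o(t)$). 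Shrinking $\delta$ if necessary and setting $\tilde f=\delta$ on $J$, $\tilde f=f_p$ elsewhere, one simply drops the $(f_p')^2$ contribution and uses $H_r(\delta)<H_r(f_p(r))$ pointwise on $J$ to conclude $I_p(\tilde f)<I_p(f_p)$.

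What you do differently is to \emph{keep} the $(f_p')^2$ contribution and extract from it, via H\"older and the total-variation bound $\int_a^b|f_p'|\ge 2\delta$, a lower bound large enough to beat the cost $\int_a^b\delta^p r^{1-p}\,dr$ of the replacement. This works, but it forces the extra step of proving $a(\delta),b(\delta)\to r_0$ so that $b-a<2a$; the paper needs no control on the length of $J$, only that it is bounded away from the origin. Your argument is thus self-contained and elementary, while the paper's exploits a simple pointwise monotonicity that makes the comparison immediate and sidesteps the geometry of the interval altogether.
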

\begin{proof}
We first claim that there is no interval of the form $[0,a]$, with
$a>0$ such that 
\begin{equation}
  \label{eq:64}
f\equiv 0 \text{ on }[0,a]\,.
\end{equation}
Indeed, suppose that \eqref{eq:64} holds for some $a$.   Fix any
function $g\in C^\infty[0,a]$ satisfying $g(0)=g(a)=0$ and $g(r)>0$ for $r\in
(0,a)$.  Then, for any
small $\e>0$ consider the function $h_\e$ defined by
\begin{equation*}
  h_\e(r)=\begin{cases}
  \e g(r) & 0\leq r\leq a\,,\\
   f_p(r)   & r>a\,.
\end{cases}
\end{equation*}
A simple computation gives
\begin{equation*}
  I_p(h_\e)=I_p^{(a,\infty)}(f_p)+\epsilon^p\int_0^a \big(
  |g'|^2+(\frac{g}{r})^2\big)^{p/2}\,rdr+\int_0^a (1-\epsilon^2g^2)^2\,rdr<I_p(f_p)\,,
\end{equation*}
 provided $\epsilon$ is chosen small enough. 
 
Next, we turn to the proof itself and  assume by negation that $f_p(r_0)=0$ for some
  $r_0>0$.  Put 
\begin{equation*}
\delta_0=\max_{r\in[0,r_0]} f_p(r)\,.    
\end{equation*}
 By the above claim $\delta_0>0$. Let $\delta\in(0,\delta_0)$ and  consider the set $S_\delta=\{
  r>0\,:\,f_p(r)<\delta\}$.  Denote by $J=(\alpha,\beta)$ the component of $S_\delta$
  containing $r_0$.  Since $\delta<\delta_0$ we have $\alpha>0$.   There is a $\delta_1>0$
  such that the function 
$$
H_r(t)=\Big(\frac{t}{r}\Big)^p +\frac{1}{2}(1-t^2)^2 
$$
is decreasing on $[0,\delta_1]$ for every $r\geq \alpha$ .
We may now replace $\delta$ by $\min(\delta,\delta_1)$ and set
\begin{displaymath}
  \tilde{f}(r) =
  \begin{cases}
    f_p & r\not \in J \\
    \delta & r\in J 
  \end{cases}\,.
\end{displaymath}
 From the monotonicity of $H_r$ it follows that $I_p(\tilde{f})< I_p(f_p)$. A contradiction.
\end{proof}
\subsection{Uniqueness}
\label{sec:3.2}
\begin{proposition}
 The non-negative minimizer for $I_p(f)$ is unique.
\end{proposition}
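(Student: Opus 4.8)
The plan is to uncover a ``hidden convexity'' of $I_p$ under the substitution $t=f^2$ and combine it with the \emph{strict} convexity of the potential. The first step is a pointwise observation: for each fixed $r>0$ the function
\[
\Phi_r(a,b):=\Big(\frac{a^2}{b}+\frac{b}{r^2}\Big)^{p/2},\qquad (a,b)\in\R\times(0,\infty),
\]
is convex. Indeed, the perspective function $(a,b)\mapsto a^2/b$ is jointly convex on $\R\times(0,\infty)$; adding the linear term $b/r^2$ keeps it convex and nonnegative; and since $p\geq2$ the map $s\mapsto s^{p/2}$ is convex and nondecreasing on $[0,\infty)$, so the composition is convex. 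The relevance of $\Phi_r$ is the identity
\[
\Big[(f')^2+\frac{f^2}{r^2}\Big]^{p/2}=\Phi_r\Big(\frac{t'}{2},t\Big),\qquad t=f^2,
\]
valid wherever $f>0$ (so that $t'=2ff'$), which turns the gradient density — not convex as a function of $f$ — into a convex function of the pair $(t,t')$.

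Next I would take two non-negative minimizers $f_1,f_2$. As in the proof of \rprop{prop:existence}, truncation gives $0\le f_i\le1$, and by the argument of \rlemma{le1a} each $f_i$ is strictly positive on $(0,\infty)$. Set $t_i=f_i^2$ and $g:=\sqrt{(t_1+t_2)/2}$, so $0\le g\le1$ and $g(0)=0$. A short computation (differentiate $g^2=(f_1^2+f_2^2)/2$ and use $f_i\le\sqrt2\,g$) gives $|g'|\le(|f_1'|+|f_2'|)/\sqrt2$ a.e.\ on $(0,\infty)$, from which $g\in X_p$. Writing $t=g^2$, so that $(t'/2,t)=\tfrac12(t_1'/2,t_1)+\tfrac12(t_2'/2,t_2)$, the convexity of $\Phi_r$ and the identity above give, a.e.\ on $(0,\infty)$,
\[
\Big[(g')^2+\frac{g^2}{r^2}\Big]^{p/2}\le\frac12\Big[(f_1')^2+\frac{f_1^2}{r^2}\Big]^{p/2}+\frac12\Big[(f_2')^2+\frac{f_2^2}{r^2}\Big]^{p/2},
\]
while strict convexity of $s\mapsto(1-s)^2$ gives $(1-g^2)^2\le\tfrac12(1-f_1^2)^2+\tfrac12(1-f_2^2)^2$ pointwise, strictly wherever $f_1\ne f_2$. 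Integrating against $r\,dr$ and adding, $I_p(g)\le\tfrac12 I_p(f_1)+\tfrac12 I_p(f_2)=m_p$, and this is strict unless $f_1=f_2$ a.e.\ — hence everywhere, by the $C^\alpha_{\mathrm{loc}}$ regularity. Minimality of $f_1,f_2$ then forces $f_1\equiv f_2$.

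The only delicate points are bookkeeping. One must justify $t_i=f_i^2\in W^{1,p}_{\mathrm{loc}}(0,\infty)$ with $t_i'=2f_if_i'$ — immediate from $0\le f_i\le1$ and $f_i\in W^{1,p}_{\mathrm{loc}}$ — and check that the singular density $(t')^2/(4t)$ causes no trouble: since $f_i>0$ on all of $(0,\infty)$ by \rlemma{le1a}, the identity $\Phi_r(t'/2,t)=[(f')^2+f^2/r^2]^{p/2}$ holds a.e., and the $r\,dr$ weight together with $g\in X_p$ keeps every integral finite. I expect the main conceptual step — and the one that would break down for $p<2$ — to be the convexity of $\Phi_r$, i.e.\ the fact that $t=f^2$ renders the whole functional convex; everything afterwards is a by-now-standard convexity argument of Brezis--Oswald type. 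An alternative organization would be to first prove directly that $t\mapsto\int_0^\infty\big[\Phi_r(t'/2,t)+\tfrac12(1-t)^2\big]\,r\,dr$ is convex on $\{f^2:f\in X_p,\ f\ge0\}$ (which also furnishes $g\in X_p$), and then invoke strict convexity of the potential part to conclude.
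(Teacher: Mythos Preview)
Your argument is correct, but it is organized differently from the paper's. The paper follows the D\'iaz--Sa\'a/Anane scheme based on the substitution $t=f^p$: it sets $w=\big((f^p+g^p)/2\big)^{1/p}$ and invokes the \emph{one-variable} convexity of $t\mapsto(t^2+1/r^2)^{p/2}$, applied at each $r$ with the weights $s(r)=f^p/(f^p+g^p)$, to dominate the gradient density; equality there already forces $f'/f=g'/g$, hence $g=cf$, and then $c=1$. The potential term is handled by checking that $(1-f^2)^2$ is convex as a function of $f^p$ (which uses both $p\ge2$ and the a priori bound $0\le f\le1$). You instead take $t=f^2$ and use the \emph{joint} convexity of the perspective-type function $\Phi_r(a,b)=(a^2/b+b/r^2)^{p/2}$; this yields only a non-strict inequality for the gradient density, and you extract the strictness from the potential, where $(1-s)^2$ is manifestly strictly convex in $s=f^2$. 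Both routes need $p\ge2$ for the gradient-side convexity. What your approach buys is that the potential convexity is immediate and requires no range restriction on $f$, and the midpoint $g=\sqrt{(f_1^2+f_2^2)/2}$ is very transparent. What the paper's approach buys is the sharper equality information $f'/f=g'/g$ coming directly from the gradient term, in the same spirit as the Picone-type identities used later in the stability analysis.
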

\begin{proof}
  We use a convexity argument due to Benguria (see \cite{bbl81}) for the case of the
  Laplacian (see \cite{bbl81})  and by Diaz and Sa\'a~\cite{disa87} and
  Anane~\cite{anane87} for
  the case of the $p$-Laplacian. More
  specifically,  we follow the presentation of Belloni and Kawhol~\cite{beka02}. 
 Assume $f$ and $g$ are both minimizers in \eqref{eq:29}. By an
 argument from the proof
 of \rprop{prop:existence}  it follows  that
 necessarily $f(r)\leq 1$ and $g(r)\leq1$ for each $r$.  Set
 \begin{equation*}
   \eta=\frac{f^p+g^p}{2}~\text{ and }~w=\eta^{\frac{1}{p}}\,.
 \end{equation*}
Denote also
\begin{equation*}
  s(r)=\frac{f^p}{f^p+g^p}\,.
\end{equation*}
Note that
\begin{equation*}
  w'=\frac{1}{2}\eta^{\frac1p-1}\big(f^{p-1}f^\prime+g^{p-1}g^\prime\big)\,.
\end{equation*}
Next we compute
\begin{multline*}
  ({w^\prime}^2+\frac{w^2}{r^2})^{\frac{p}{2}}=\Big|\eta^{\frac2p-2}\big(\frac{f^{p-1}f^\prime+g^{p-1}g^\prime}{2}\big)^2+\frac{\eta^{\frac2p}}{r^2}\Big|^{\frac p2}=
 \eta\Big|\big(\frac{f^{p-1}f^\prime+g^{p-1}g^\prime}{2\eta}\big)^2+\frac1{r^2}\Big|^{\frac
   p2}
\\=\eta\Big|\big(\frac{s(r)f^\prime}{f}+\frac{(1-s(r))g^\prime}{g}\big)^2+\frac1{r^2}\Big|^{\frac
  p2}
\\\leq \eta\Big(s(r){\big(\frac{|f'|^2}{f^2}+\frac1{r^2}\big)}^{\frac p2}+
(1-s(r)){\big(\frac{|g'|^2}{g^2}+\frac1{r^2}\big)}^{\frac p2}\Big)\\
=\frac{\eta}{f^p+g^p}\big(f'^2+\frac{f^2}{r^2}\big)^{\frac
  p2}+\frac{\eta}{f^p+g^p}\big(g'^2+\frac{g^2}{r^2}\big)^{\frac p2}
=\frac{1}{2}\Big(\big(f'^2+\frac{f^2}{r^2}\big)^{\frac
  p2}+\big(g'^2+\frac{g^2}{r^2}\big)^{\frac p2}\Big)
\end{multline*}
 Above we used the convexity of the function
 $t\mapsto(t^2+\frac{1}{r^2})^{\frac p2}$. Note that equality holds in the
 above only if $\frac{f'}{f}=\frac{g'}{g}$. If such an equality holds
 for all $r$, we conclude easily that $g=cf$ for some constant $c$,
 which then must be equal to $1$. 
 Therefore, the uniqueness claim follows from
 the above inequality and the convexity of the second term $(1-f^2)^2$
 as a function of $f^p$ for $p\geq2$ and $0\leq f\leq1$.
\end{proof}
\begin{remark}
  As a matter of fact, the only minimizers of $I_p$ are $f_p$ and
  $-f_p$. In view of lemma \ref{le1a} a non-negative minimizer must be
  strictly positive.  Since $I_p( |f| )=I_p(f)$,
  it follows that a minimizer may not change sign, and our assertion
  follows from the uniqueness for non-negative minimizers.
\end{remark}

\subsection{Regularity}
\label{sec:3.3}
This subsection is devoted to the study of the regularity properties
of the minimizer $f_p$.
\begin{proposition}
  \label{prop:regularity}
  We have $f_p\in C^\infty(0,\infty)$.
\end{proposition}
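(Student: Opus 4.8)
The plan is to start from the Euler--Lagrange equation for $f_p$ and to exploit the non-degeneracy coming from \rlemma{le1a}. Write $\rho:=(f_p')^2+f_p^2/r^2$. On every compact interval $[a,b]\subset(0,\infty)$ we have, setting $c_0:=\min_{[a,b]}f_p>0$ (which makes sense since $f_p$ is continuous and positive on $(0,\infty)$), the lower bound $\rho\geq f_p^2/r^2\geq c_0^2/b^2=:c_1>0$. Testing the minimality of $f_p$ in $X_p$ against $f_p+t\phi$ with $\phi\in C_c^\infty(0,\infty)$ and differentiating at $t=0$ (the difference quotients are dominated, using $\rho\geq c_1>0$ on $\operatorname{supp}\phi$ together with $f_p\in X_p$) yields the weak Euler--Lagrange equation
\[
\int_0^\infty\big(L_Z(f_p',f_p,r)\,\phi+L_P(f_p',f_p,r)\,\phi'\big)\,dr=0,
\]
i.e. $L_P(f_p',f_p,\cdot)=rp\rho^{p/2-1}f_p'$ has weak derivative $L_Z(f_p',f_p,\cdot)=rp\rho^{p/2-1}\frac{f_p}{r^2}-2r(1-f_p^2)f_p$; both quantities lie in $L^1_{\mathrm{loc}}(0,\infty)$ thanks to $\rho\geq c_1>0$ on compacts (so that $\rho^{p/2-1}$ and $\rho^{p/2-1}|f_p'|$ are dominated by $1+\rho^{p/2}$) and to $\rho^{p/2}\in L^1_{\mathrm{loc}}$, which holds since $f_p\in X_p$ (and $0\le f_p\le 1$ by \rprop{prop:existence}).

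The main step is to upgrade $f_p\in W^{1,p}_{\mathrm{loc}}$ to $f_p\in C^1(0,\infty)$. Fix $[a,b]$ as above and observe that on the region $\{(P,Z,r):r\in[a,b],\ Z\in[c_0,1],\ P\in\R\}$ the Lagrangian $L$ is $C^\infty$ (its only singularity, at $P^2+Z^2/r^2=0$, is avoided because $P^2+Z^2/r^2\ge c_0^2/b^2>0$), and the map $P\mapsto\Phi_{r,Z}(P):=L_P(P,Z,r)$ is a $C^\infty$ increasing bijection of $\R$, since $\partial_P\Phi_{r,Z}=rp(P^2+Z^2/r^2)^{p/2-2}\big((p-1)P^2+Z^2/r^2\big)>0$; by the implicit function theorem, $(r,Z,y)\mapsto\Phi_{r,Z}^{-1}(y)$ is jointly $C^\infty$ on $\{r>0,\ Z>0,\ y\in\R\}$. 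The weak equation above says that $L_P(f_p',f_p,\cdot)$ has a weak derivative in $L^1_{\mathrm{loc}}$, hence a continuous representative $\psi$. Therefore $f_p'=\Phi_{r,f_p(r)}^{-1}(\psi(r))$ a.e., and the right-hand side is continuous in $r$; since $f_p\in W^{1,1}_{\mathrm{loc}}$, this forces $f_p\in C^1[a,b]$ with $f_p'$ continuous (in particular locally bounded). As $[a,b]$ is arbitrary, $f_p\in C^1(0,\infty)$.

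Finally I would run the standard bootstrap. With $f_p\in C^1$ and $f_p>0$, the coefficient $\rho$ is continuous and locally bounded away from $0$, so the Euler--Lagrange equation is a genuine (non-degenerate) quasilinear ODE with $C^\infty$ coefficients on $(0,\infty)$. Put $v:=\Phi_{r,f_p}(f_p')=rp\rho^{p/2-1}f_p'$; the equation reads $v'=\Psi$ with $\Psi(r):=rp\rho^{p/2-1}\frac{f_p}{r^2}-2r(1-f_p^2)f_p\in C(0,\infty)$, hence $v\in C^1$. Then $f_p'=\Phi_{r,f_p}^{-1}(v)\in C^1$ because $\Phi^{-1}$ is jointly smooth and $r,f_p,v\in C^1$, so $f_p\in C^2$. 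Iterating: if $f_p\in C^k$ then $\rho,\Psi\in C^{k-1}$, hence $v\in C^k$ and $f_p'=\Phi_{r,f_p}^{-1}(v)\in C^k$, i.e. $f_p\in C^{k+1}$; therefore $f_p\in C^\infty(0,\infty)$.

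The one delicate point is the first upgrade from $W^{1,p}$ to $C^1$; everything afterwards is a routine elliptic bootstrap made possible by the non-degeneracy $\rho>0$ inherited from \rlemma{le1a}. An alternative route to that first step is to apply the $C^{1,\alpha}$ regularity theory for the $p$-Laplacian to the radial map $F(x):=f_p(|x|)e^{i\theta}$ on $\R^2\setminus\{0\}$, where it solves the $p$-Ginzburg--Landau system and is uniformly elliptic since $|\nabla F|^2=\rho>0$ there; the one-dimensional argument above is, however, more self-contained.
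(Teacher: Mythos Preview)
Your proof is correct and follows essentially the same route as the paper: derive the weak Euler--Lagrange equation, use \rlemma{le1a} to guarantee the non-degeneracy $\rho>0$ on compact subintervals, conclude that $r\rho^{p/2-1}f_p'$ (equivalently $r|\nabla u_p|^{p-2}f_p'$) admits a continuous representative, invert to get $f_p\in C^1$, and then bootstrap. The paper compresses your inversion step into a single sentence and phrases the integrability of the right-hand side as membership in $W^{1,p/(p-2)}_{\mathrm{loc}}$ rather than $W^{1,1}_{\mathrm{loc}}$, but the mechanism is identical; your version simply spells out the implicit-function argument that the paper leaves to the reader.
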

\begin{proof}
 The Euler-Lagrange equation associated with \eqref{eq:29} is 
   \begin{equation}
 \label{eq:30}
     \frac{1}{r}\big(r|\nabla u_p|^{p-2}f_p^{\prime}\big)^\prime =
     |\nabla u_p|^{p-2}\frac{f_p}{r^2} - \frac{2}{p}f_p(1-f_p^2) \, ,
   \end{equation}
where 
$$
u_p=f_p(r)e^{i\theta}\,.
$$
 A direct consequence of \eqref{eq:30} is that $|\nabla u_p|^{p-2}f_p^{'}\in
 W^{1,\frac{p}{p-2}}_{\text{loc}}(0,\infty)\subset C(0,\infty)$ and we
 immediately obtain that   $f_p\in C^1(0,\infty)$ (using that $f_p>0$ by \rlemma{le1a}) . Inserting this
 new information into \eqref{eq:30} we deuce that  $f_p\in
 C^2(0,\infty)$. Bootstrapping gives  $f_p\in C^k(0,\infty)$ for all $k$, as claimed.
\end{proof}

Our next objective is to prove the differentiability of $f$ at $0$.
\begin{proposition}
  \label{prop:at-zero}
 $f_p^{'}(0)=\lim_{r\to 0^+}\frac{f_p(r)}{r}$ exists and is a positive number.  
\end{proposition}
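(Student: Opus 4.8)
The plan is to regard the Euler--Lagrange equation \eqref{eq:30} as a nonlinear, degenerate ODE with a regular singular point at $r=0$ and to classify its bounded solutions near the origin; the linear behavior $f_p(r)\sim c\,r$ is then forced, and the value of $c$ is what we have to control.

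\emph{Step 1: a Lipschitz bound at the vortex.} The decisive step is to show that $f_p(r)/r$ and $f_p'(r)$ stay bounded on some interval $(0,r_0)$ --- Morrey's embedding only gives $f_p(r)\le Cr^{\alpha}$ with $\alpha=1-2/p<1$, which is insufficient. I would exploit that $u_p$ solves the $p$-Ginzburg--Landau system $\Div(|\nabla u_p|^{p-2}\nabla u_p)=-\frac2p u_p(1-|u_p|^2)$ with right-hand side bounded in $L^\infty$ (because $0\le f_p\le1$), invoking interior $C^{1,\beta}$-regularity for $p$-harmonic maps of this type to obtain $\nabla u_p\in L^\infty_{\mathrm{loc}}(\R^2)$; alternatively one argues directly on the one-dimensional equation by bootstrapping, starting from $f_p(r)\le Cr^{\alpha}$, the integrated form of \eqref{eq:30},
\[
 r\,|\nabla u_p|^{p-2}f_p'(r)=\varepsilon\,|\nabla u_p|^{p-2}f_p'(\varepsilon)+\int_\varepsilon^r\Big(|\nabla u_p|^{p-2}\,\frac{f_p}{s}-\frac2p\,sf_p(1-f_p^2)\Big)\,ds .
\]
Once this bound is in hand, $a(r):=|\nabla u_p(r)|^{p-2}$ is bounded on $(0,r_0)$, $r\,a(r)\,f_p'(r)\to 0$ as $r\to 0^+$, and the identity reduces to $r\,a(r)\,f_p'(r)=\int_0^r\big(\frac{a(s)f_p(s)}{s}-\frac2p sf_p(s)(1-f_p(s)^2)\big)\,ds$.

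\emph{Step 2: the one-parameter family of bounded solutions.} Next I would desingularize, passing to $t=-\ln r$ together with the variables $f_p$ and $rf_p'$; this turns \eqref{eq:30} into an autonomous system on $(t_0,\infty)$ perturbed by a term of order $O(e^{-2t})=O(r^2)$ coming from the potential. Its line of equilibria is $\{f=c\,r\}$ (the exact solutions $f\equiv c\,r$ of the homogeneous $p$-harmonic equation), and the linearization of \eqref{eq:30} about such a solution is, to leading order near $r=0$, the Bessel-type equation $\frac{A p}{2}\big(\phi''+\phi'/r-\phi/r^2\big)+\frac2p\phi=0$ with $A=(2c^2)^{(p-2)/2}$, whose fundamental solutions behave at the origin like $r$ and like $r^{-1}$, the second being incompatible with boundedness of $f_p$. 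A contraction-mapping argument in an appropriate weighted space then shows that the solutions of \eqref{eq:30} that remain bounded as $r\to 0^+$ are precisely the family $\{f_c\}_{c\ge0}$ with $f_c(r)=c\,r+o(r)$ as $r\to 0^+$ and $f_0\equiv0$. Since $f_p$ is bounded by \rprop{prop:existence}, it equals $f_c$ for some $c\ge0$; in particular $\lim_{r\to0^+}f_p(r)/r=c$ exists, which is by definition $f_p'(0)$.

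\emph{Step 3: positivity, and where the difficulty lies.} Equation \eqref{eq:30} has no bounded solution vanishing strictly faster than linearly at $0$ apart from $f\equiv0$ (again part of the classification: $f_0\equiv0$ is the only member of the family with $c=0$). Since $f_p\not\equiv0$ --- indeed $f_p>0$ on $(0,\infty)$ by \rlemma{le1a} --- we conclude $c=f_p'(0)>0$. The heart of the argument is Step 1: the $p$-Laplacian degenerates precisely where $\nabla u_p$ vanishes, i.e.\ at the vortex $r=0$, so passing from the Morrey exponent $\alpha<1$ to a Lipschitz bound is not automatic and requires the full strength of the $p$-harmonic regularity theory (or a careful bootstrap on the singular ODE). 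Steps 2 and 3 are then a standard, if technical, analysis of a regular singular point.
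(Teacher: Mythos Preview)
Your route is genuinely different from the paper's and can be made to work, but it trades a self-contained ODE argument for external regularity machinery plus a Frobenius-type local analysis. The paper never invokes $C^{1,\beta}$ theory for $p$-Laplacian systems; instead it introduces the logarithmic derivative $h=rf_p'/f_p$ and derives from \eqref{eq:30} a first-order ODE
\[
h'=\frac{F_p(h)}{r}-\frac{2}{p}\,|\nabla u_p|^{2-p}r(1-f_p^2)\,\frac{1+h^2}{1+(p-1)h^2},\qquad
F_p(h)=\frac{(1+h^2)(1-h)\bigl(1+(p-1)h\bigr)}{1+(p-1)h^2}.
\]
The inequality $h'\le F_p(h)/r$ alone traps $h\in(-\tfrac{1}{p-1},1)$ on $(0,\infty)$ (Step~1), hence $f_p/r$ is strictly decreasing (Step~2); this monotonicity bounds $|\nabla u_p|$ \emph{below} for small $r$, which feeds back into the ODE to force $h\to1$ as $r\to0^+$ (Step~3), and integrating $h\ge 1-cr^2$ then gives $f_p\le Cr$ with $f_p'(0)>0$ from the monotonicity of $f_p/r$ (Step~4). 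Thus the Lipschitz bound you isolate as the ``decisive step'' is for the paper an \emph{output} of the ODE analysis, whereas you need it as an \emph{input} to your dynamical-systems classification; and obtaining that input via regularity theory requires care, since the governing equation is a \emph{system} on $\R^2$ and the scalar Tolksdorf/DiBenedetto results do not apply verbatim (you would need the Uhlenbeck-type vector-valued analogues). Your approach is more conceptual and transferable; the paper's is elementary, avoids the contraction-in-weighted-spaces argument entirely, and its byproducts --- the trapping of $h$ and the monotonicity of $f_p/r$ --- are reused repeatedly later (monotonicity of $f_p'$, concavity, the large-$p$ limit in Section~\ref{sec:large}).
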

\begin{proof}
 We denote for convenience $f$ for $f_p$ and get from \eqref{eq:30},
 \begin{multline}
\label{eq:91}
   0=f''\Big( 1+\frac{p-2}{|\nabla
     u_p|^2}|f'|^2\Big)+\frac{f'}{r}\Big(1-\frac{p-2}{|\nabla
     u_p|^2}\frac{f^2}{r^2}\Big)-\frac{f}{r^2}\Big(1-\frac{p-2}{|\nabla
     u_p|^2}{|f'|}^2\Big)\\+\frac{2}{p}|\nabla u_p|^{2-p}f(1-f^2)\,,
 \end{multline}
 or equivalently,
 \begin{multline}
 \label{eq:62}
   \frac{rf''}{f'}=\frac{-\Big(
     |f'|^2-(p-3)\frac{f^2}{r^2}\Big)+
     \frac{f}{rf'}\Big(\frac{f^2}{r^2}-(p-3)|f'|^2\Big)}
   {{\frac{f^2}{r^2}}+(p-1)|f'|^2}\\      
 -\frac{2}{p}|\nabla
 u_p|^{2-p}\frac{f}{rf'}r^2(1-f^2)\cdot\frac{1}{1+(p-2)\frac{|f'|^2}{|\nabla u_p|^2}}\,.
 \end{multline}
 Put 
\begin{equation}
\label{eq:1000}
h = \frac{rf^\prime}{f}\,.
\end{equation}
 We divide the rest of the proof into several steps.\\[2mm]
\underline{Step 1:} $-\frac{1}{p-1}<h(r)<1$  for all $r>0\,.$ \\[2mm]
 We can rewrite \eqref{eq:62} as
 \begin{equation}
 \label{eq:9}
 r \frac{f''}{f^\prime} = \frac{-h^2+(p-3)+h^{-1}
   -(p-3)h}{1+(p-1)h^2}  -
   \frac{2}{p}|\nabla u_p|^{2-p}\frac{r^2}{h}(1-f^2)\frac{1+h^2}{1+(p-1)h^2}\,.
 \end{equation}
 Since 
 \begin{equation}
 \label{eq:63}
   h'=\frac{f''h}{f'}+\frac{h}{r}(1-h)\,,
 \end{equation}
 substituting \eqref{eq:9} into \eqref{eq:63} yields
 \begin{multline}
   \label{eq:32}
 h^{\prime} =\Big(\frac{1-h}{r}\Big) \cdot \Big(
 \frac{1+(p-2)h+h^2}{1+(p-1)h^2}+h\Big)-\frac{2}{p}|\nabla
 u_p|^{2-p}r(1-f^2)\frac{1+h^2}{1+(p-1)h^2}     \\  =      \frac{1+h^2}{1+(p-1)h^2} \left[\frac{(1-h)[1+(p-1)h]}{r} -
   \frac{2}{p}|\nabla u_p|^{2-p}r(1-f^2) \right] \,.
 \end{multline}
By \eqref{eq:32} we have
  \begin{equation}
\label{eq:66}
    h^\prime \leq \frac{1}{r}F_p(h) \,,
  \end{equation}
where
\begin{equation}
\label{eq:68}
  F_p(h) = \frac{(1+h^2)(1-h)[1+(p-1)h]}{1+(p-1)h^2}  \,.
\end{equation}
We now prove that $h(r)<1$ for all $r>0$. Suppose to the contrary that there  exists $r_0>0$
for which  $h(r_0)\geq1$. Then, \eqref{eq:66} yields $h^\prime(r)<0$ and
$h(r)> 1$ for all $r<r_0$. Therefore, by \eqref{eq:68} also $F_p(h)
<0$ for $r<r_0$. Integrating \eqref{eq:66} gives
\begin{equation}
\label{eq:67}
  \int_{h(r_0)}^{h(r)} \frac{dh}{-F_p(h)} \geq \ln \frac{r_0}{r} \,,\quad \forall r<r_0
  \,.
\end{equation}
Since $\int_{h(r_0)}^{\infty} \frac{dh}{-F_p(h)}<\infty$,  
\eqref{eq:67} leads to a contradiction for $r>0$ small
enough. 

Finally, we show that $h(r)>-\frac{1}{p-1}$ on $(0,\infty)$. Suppose to
the contrary that  $h(r_0)\leq -\frac{1}{p-1}$ for some $r_0$. Then, from
\eqref{eq:66} and  \eqref{eq:68} it follows that 
\begin{equation*}
   h(r)\leq -\frac{1}{p-1} \text{ and } h'(r)<0\,,\quad \forall r\geq r_0\,.
\end{equation*}
 Therefore, also $f_p^{'}(r)<0$ for all $r\geq r_0$, violating $I_p(f_p)<\infty$.
Step~1 is established.\\[2mm]
\underline{Step 2:}  $ \frac{f_p(r)}{r}$ is  strictly decreasing on
$(0,\infty)$.  \\[2mm]
From Step~1 we get that 
  \begin{equation}
\label{eq:34}
    \Big( \frac{f}{r} \Big)^\prime = \frac{f}{r^2}(h-1) < 0\,,~\forall r>0\,,
  \end{equation}
 and the conclusion follows.
\\[2mm]
\underline{Step 3:} $\lim_{r\to 0^+} h(r)=1.$\\[2mm]
Fix any $r_0>0$.  By Step~2 we have,
  \begin{displaymath}
    |\nabla u_p|(r)\geq\frac{f(r)}{r} > \frac{f(r_0)}{r_0}\,, \quad \forall r<r_0\,.
  \end{displaymath}
Consequently, we have by \eqref{eq:32},
\begin{equation}
\label{eq:33}
  h^\prime  \geq \frac{F_p(h)}{r} - C_0r\,,\quad \forall r\in(0,r_0)\,,
\end{equation}
for some positive $C_0$, which is independent of $r$. For
a contradiction, we assume that $\liminf_{r\to0^+} h(r)=a<1$. Then, using
\eqref{eq:33} we can find $r_1\in(0,r_0)$ small enough so that $h'(r_1)>0$.   
Bootstrapping we obtain that $h^\prime(r)>0$
for all $r<r_1$. In particular, the full limit  $\lim_{r\to0^+}
h(r)=a$ exists. Integration of \eqref{eq:33} then yields
\begin{equation}
 \label{eq:69}
 \int_{h(r)}^{h(r_1)} \frac{dh}{F_p(h)} \geq \ln \frac{r_1}{r} - C\,,
  \quad \forall r<r_1 \,.
\end{equation}
Here we used the fact that $F_p(h)>0$ by Step~1. Passing to the limit
$r\to0^+$ in \eqref{eq:69} gives  $\int_{a}^{h(r_1)}
\frac{dh}{F_p(h)}=\infty$. In view of \eqref{eq:68} we must have
\begin{equation*}
  a=\lim_{r\to0^+} h(r)=-\frac{1}{p-1}\,.
\end{equation*}
In particular, for $r$ sufficiently small we have
$\frac{rf^{'}}{f}\leq-\frac{1}{2(p-1)}$, implying
\begin{equation*}
  f(r)\geq Cr^{-\frac{1}{2(p-1)}}\,.
\end{equation*}
 A contradiction.\\[2mm]
\underline{Step 4:} $f'(0)$ exists and it is a positive number.\\[2mm]
By Step 2, the (possibly generalized) limit $\lim_{r\to 0^+}\frac{f(r)}{r}$
exists, so we only need to exclude the possibility that the limit
equals $+\infty$. From Step~3 and \eqref{eq:33} we get that
 $$
h(r)\geq1-cr^2 \,,~\forall r<r_0\,,
$$
i.e., 
\begin{displaymath}
  \frac{f^\prime}{f} \geq \frac{1}{r} - cr \,.
\end{displaymath}
Therefore, $f(r)\leq Cr$ for some positive constant $C$,
independently of $r$, and the differentiability of $f$ at $0$
follows. Finally, $f'(0)>0$ since $\frac{f(r)}{r}$ is decreasing.
\end{proof}

\subsection{Monotonicity}
\label{sec:3.4}
\begin{proposition}
\label{le2a}
  $f_p^{'}>0$ in $(0,\infty)$.
\end{proposition}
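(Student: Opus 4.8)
The plan is to argue by contradiction, ruling out the existence of a first point at which $f_p'$ vanishes. By Proposition~\ref{prop:at-zero} we have $f_p'(0)>0$, and $f_p\in C^\infty(0,\infty)$ by Proposition~\ref{prop:regularity}, so $f_p'>0$ on a maximal interval $(0,R)$ with $R\in(0,\infty]$; the goal is $R=\infty$. Assume $R<\infty$, so that $f_p'(R)=0$. The central object is the ``flux'' $\phi(r):=r|\nabla u_p|^{p-2}f_p'(r)$, which is smooth on $(0,\infty)$ since $|\nabla u_p|\ge f_p/r>0$ there (Lemma~\ref{le1a}), has the same sign as $f_p'$, and, after multiplying the Euler--Lagrange equation \eqref{eq:30} by $r$, satisfies
\[
\phi'(r)=\frac{f_p}{r}\,Q(r),\qquad Q(r):=|\nabla u_p|^{p-2}-\frac{2}{p}\,r^2(1-f_p^2).
\]

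Two observations drive the proof. First, at any point $\rho$ with $f_p'(\rho)=0$ one has $|\nabla u_p(\rho)|=f_p(\rho)/\rho$, so $\operatorname{sign}Q(\rho)=\operatorname{sign}\!\big(f_p(\rho)^{p-2}\rho^{-p}-\tfrac2p(1-f_p(\rho)^2)\big)$. Second, wherever $f_p$ is non-increasing the quantity $f_p^{p-2}r^{-p}$ strictly decreases (here $p-2>0$ and $0<f_p<1$ by Lemma~\ref{le1a} and Proposition~\ref{prop:existence}), while $\tfrac2p(1-f_p^2)$ strictly increases. Applying the first observation at $R$: since $\phi>0$ on $(0,R)$ and $\phi(R)=0$ we get $\phi'(R)\le0$, hence $Q(R)\le0$, i.e. $f_p(R)^{p-2}R^{-p}\le\tfrac2p(1-f_p(R)^2)$; in particular $f_p(R)<1$.

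Next I would show $f_p'\le0$ on all of $(R,\infty)$. Suppose not, and let $\rho_0=\inf\{\rho>R:\phi(\rho)>0\}$. By continuity $\phi(\rho_0)=0$ and $\phi'(\rho_0)\ge0$, so $Q(\rho_0)\ge0$, i.e. $f_p(\rho_0)^{p-2}\rho_0^{-p}\ge\tfrac2p(1-f_p(\rho_0)^2)$. On $(R,\rho_0)$ we have $\phi\le0$, hence $f_p'\le0$; moreover $f_p$ cannot be constant on any subinterval (plugging a constant $c$ into \eqref{eq:30} would force $c^{p-1}r^{-p}$ to be constant, impossible since $f_p>0$), so $f_p(\rho_0)<f_p(R)<1$ and $\rho_0>R$. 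Then
\[
\frac{f_p(\rho_0)^{p-2}}{\rho_0^{p}}<\frac{f_p(R)^{p-2}}{R^{p}}\le\frac{2}{p}\big(1-f_p(R)^2\big)<\frac{2}{p}\big(1-f_p(\rho_0)^2\big),
\]
a contradiction. The borderline case $\rho_0=R$ is excluded because it forces $\phi'(R)=0$, hence $Q(R)=0$, whereupon a short computation using $f_p'(R)=0$ gives $Q'(R)<0$ and thus $\phi''(R)=\tfrac{f_p(R)}{R}Q'(R)<0$, so $\phi<0$ just to the right of $R$. Therefore $f_p'\le0$ on $(R,\infty)$, and again by the no-constant-interval fact $f_p(r)<f_p(R)<1$ for every $r>R$. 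Consequently $\int_R^\infty(1-f_p^2)^2\,r\,dr\ge(1-f_p(R)^2)^2\int_R^\infty r\,dr=\infty$, contradicting $I_p(f_p)<\infty$ (Proposition~\ref{prop:existence}). Hence $R=\infty$ and $f_p'>0$ on $(0,\infty)$.

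The step I expect to be the main obstacle is precisely the one ruling out oscillation of $f_p'$ past its first zero: at a zero of $f_p'$ the sign of $\phi'$ is not fixed by purely local data, so one is forced to combine the monotonicity of $f_p^{p-2}r^{-p}$ against $\tfrac2p(1-f_p^2)$ between consecutive zeros with the finite-energy constraint. Equivalently, the whole argument can be phrased in terms of $h=rf_p'/f_p$ via \eqref{eq:32}, since at zeros of $h$ one has $\operatorname{sign}h'=\operatorname{sign}Q$.
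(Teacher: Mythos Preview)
Your proof is correct. The paper's argument and yours share the same skeleton---locate the first zero of $f_p'$, use the Euler--Lagrange equation at critical points to compare signs, and invoke finite energy to rule out $f_p'\le 0$ persisting---but the implementations diverge in one meaningful way.

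The paper first proves $f_p$ is non-decreasing and only afterwards upgrades to strict positivity. For the non-decreasing step it relies on \emph{Step~2 of Proposition~\ref{prop:at-zero}}, i.e.\ the previously established global fact that $f_p/r$ is strictly decreasing on all of $(0,\infty)$; this is what drives inequality \eqref{eq:70} when comparing the two consecutive critical points $r_1<r_2$ via \eqref{eq:65}. The existence of $r_2$ is obtained from $f_p\to1$. The borderline case is handled through the third derivative $f_p^{(3)}(r_1)<0$.

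Your argument is more self-contained: you never invoke the global decrease of $f_p/r$, only the local decrease of $f_p$ on the interval $[R,\rho_0]$ where $f_p'\le0$, which already forces $f_p^{p-2}r^{-p}$ to strictly drop and $\tfrac{2}{p}(1-f_p^2)$ to strictly rise between the two zeros. Working with the flux $\phi=r|\nabla u_p|^{p-2}f_p'$ and the quantity $Q$ packages this comparison cleanly, and your borderline case via $\phi''(R)=\tfrac{f_p(R)}{R}Q'(R)<0$ is the exact analogue of the paper's third-derivative computation \eqref{eq:73}. The upshot: your proof is logically independent of Step~2 of Proposition~\ref{prop:at-zero}, whereas the paper's is not; conversely, the paper's use of $f_p\to1$ makes that ingredient explicit, while in your version it appears in contrapositive form as the divergent-integral step.
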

\begin{proof}
  First we show that $f_p$ is non-decreasing on $(0,\infty)$. 
Recall that $f_p'(0)>0$ and define
\begin{equation*}
  r_1=\sup \{ r\,:\,f_p'(s)\geq 0 \text{ on }[0,r]\}\,.
\end{equation*}
 If $r_1=\infty$ then clearly $f_p$ is non-decreasing on $(0,\infty)$. Assume
 then that $r_1<\infty$, and then obviously 
 \begin{equation*}
 f_p'(r_1)=0\,. 
 \end{equation*}
By the definition of $r_1$ we have also
\begin{equation}
  \label{eq:72}
f_p^{''}(r_1)\leq 0\,.
\end{equation}
Next we distinguish between two cases:\\[2mm]
{\rm (i)} There exists a right-neighborhood of $r_1$, $[r_1,R]$, in which
$f_p^{'}\leq 0$.\\[2mm]
{\rm (ii)}  There exists no neighborhood as in {\rm (i)}.\\[2mm]
Consider first case {\rm (i)}. Since
  $f_p\xrightarrow[r\to\infty]{}1$ , there must exist a maximal
  right-neighborhood,  where $f_p^\prime\leq0$ which we denote by
  $[r_1,r_2]$. Clearly, we must have
  $f_p^{'}(r_2)=0$. From \eqref{eq:91} we get that
  \begin{equation}
 \label{eq:65}
   \frac{r^2f_p^{\prime\prime}}{f_p}= 1 -
    \frac{2}{p}\Big(\frac{f_p}{r}\Big)^{-(p-2)}r^2
    (1-f_p^2)\,,\quad\text{for }r=r_i\,,~ i=1,2\,.
  \end{equation}
By Step~2 of the proof of Proposition~\ref{prop:at-zero} we have 
\begin{equation}
\label{eq:70}
  \Big(\frac{f_p(r_2)}{r_2}\Big)^{-(p-2)}> \Big(\frac{f_p(r_1)}{r_1}\Big)^{-(p-2)} \,.
\end{equation}
Furthermore, since $f_p^\prime\leq0$ in $[r_1,r_2]$ we have
\begin{equation}
\label{eq:71}
  (1-f^2_p)(r_2)\geq   (1-f^2_p)(r_1) \,.
\end{equation}
Substituting \eqref{eq:70},~\eqref{eq:71} into \eqref{eq:65} and using
\eqref{eq:72} yields
\begin{displaymath}
   \frac{r^2f_p^{\prime\prime}}{f_p}\bigg|_{r=r_2} <
   \frac{r^2f_p^{\prime\prime}}{f_p}\bigg|_{r=r_1} \leq 0 \,,
\end{displaymath}
i.e., $f_p^{\prime\prime}(r_2)<0$, which clearly contradicts the definition of $r_2$.

Next we turn to case {\rm (ii)}. In this case we have $f_p''(r_1)=0$. 
Differentiating the equation \eqref{eq:30} at $r=r_1$ yields
\begin{equation}
\label{eq:73}
  f_p^{(3)}(r_1)=-p\frac{f_p}{r_1^3}<0\,.
\end{equation}
 This implies that $r_1$ is a maximum point for $f_p'$ which is
 obviously impossible.

Finally, we prove that $f_p^{\prime}>0$ on $[0,\infty)$ (we know already that $f_p^{\prime}(0)>0$).
  Suppose, for a contradiction, that there exists $r_0>0$ such that
  \begin{displaymath}
    f_p^\prime(r_0)=f_p^{\prime\prime}(r_0)=0 \,.
  \end{displaymath}
We then obtain the same identity as in \eqref{eq:73}, but this time at
$r=r_0$. Again we get that $f_p^\prime$ has a maximum at $r_0$, a contradiction.
\end{proof}

To prove monotonicity of $f_p^\prime$ we need the following result
\begin{lemma}
  \label{lem:monotonicity2}
We have
\begin{equation}
\label{eq:10}
  h'\leq 0\,, \quad \forall r>0 \,.
\end{equation}
Furthermore, 
\begin{displaymath}
  \lim_{r\to\infty}h(r) = 0 \,.
\end{displaymath}
\end{lemma}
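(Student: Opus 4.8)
The plan is to work with the ODE \eqref{eq:32} for $h=rf'/f$. Writing it as $h'(r)=\dfrac{1+h^2}{1+(p-1)h^2}\cdot\dfrac1r\,G(r)$ with
$$G(r):=(1-h)[1+(p-1)h]-\tfrac2p|\nabla u_p|^{2-p}r^2(1-f^2),$$
the positive prefactor shows $h'$ and $G$ have the same sign, so it suffices to prove $G\le0$ on $(0,\infty)$. I would record at the outset the facts I need: by Step~1 and Step~3 of the proof of \rprop{prop:at-zero} together with \rprop{le2a} we have $0<h<1$ on $(0,\infty)$ and $h(r)\to1$ as $r\to0^+$; moreover $f<1$ on $(0,\infty)$ (else $f\equiv1$ past some radius, which violates \eqref{eq:30}) and $f(r)\to1$ as $r\to\infty$ (else $\int(1-f^2)^2 r\,dr=\infty$).

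The second assertion is then the easy part: once $h'\le0$ is known, $h$ is nonincreasing and bounded below by $0$, hence converges to some $\ell\ge0$; if $\ell>0$ then $\log f(r)=\log f(1)+\int_1^r h(s)/s\,ds\ge\log f(1)+\ell\log r\to+\infty$, contradicting $f\le1$, so $\ell=0$.

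For $G\le0$ I would argue by contradiction, exploiting the geometry forced by $h(0^+)=1$. If $h'(\rho)>0$ at some $\rho>0$, then since $h(r)\to1>h(\rho)$ as $r\to0^+$ one finds $r_1\in(0,\rho)$ with $h(r_1)=h(\rho)$ and $h<h(\rho)$ on $(r_1,\rho)$, whence $h$ has an interior minimum on $[r_1,\rho]$ at some $r_2$ with $h'(r_2)=0$ and $h''(r_2)\ge0$. Differentiating the identity $rh'=\dfrac{1+h^2}{1+(p-1)h^2}G$ at $r_2$ — where both $h'$ and $G$ vanish — gives $r_2h''(r_2)=\dfrac{1+h^2}{1+(p-1)h^2}\,G'(r_2)$, and a direct computation of $G'$, using $|\nabla u_p|^2=(f/r)^2(1+h^2)$, $ff'=f^2h/r$, and the relation $f'(1-h)+rf''=0$ valid at $r_2$, yields
$$G'(r_2)=-\frac{Q(r_2)}{r_2}\Bigl[(p-2)(1-h)+2-\tfrac{2f^2h}{1-f^2}\Bigr],\qquad Q:=\tfrac2p|\nabla u_p|^{2-p}r^2(1-f^2)>0.$$
Hence the desired contradiction $h''(r_2)<0$ follows as soon as the pointwise inequality
$$[(p-2)(1-h)+2](1-f^2)>2f^2h\qquad(\star)$$
holds at $r_2$. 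Thus the lemma reduces entirely to establishing $(\star)$, and this reduction, combined with $h(0^+)=1$, $h<1$ (which forbid $h$ from having any interior local maximum), will give $h'\le0$ everywhere.

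Establishing $(\star)$ is the crux. When $f^2\le\tfrac12$ it is immediate, since then $(p-2)(1-h)+2>2$ and $1-f^2\ge f^2\ge f^2h$. When $f^2>\tfrac12$, $(\star)$ is equivalent to the bound $h<\dfrac{p(1-f^2)}{(p-2)+(4-p)f^2}$, and one checks it holds for $r$ near $0$ (there the left side of $(\star)$ tends to $2$ and the right side to $0$). The plan is to propagate it: let $r_*$ be the first radius at which $(\star)$ degenerates to equality; since $(\star)$ holds on $(0,r_*)$, the computation above (applied on subintervals) rules out interior minima of $h$ there, which together with $h(0^+)=1$, $h<1$ forces $h'\le0$ on $(0,r_*)$; then, differentiating $\Theta(r):=[(p-2)(1-h)+2](1-f^2)-2f^2h$ one gets
$$\Theta'=-h'[(p-2)+(4-p)f^2]-2ff'[p+(4-p)h],$$
and one must extract the strict sign $\Theta'(r_*)>0$, contradicting $\Theta'(r_*)\le0$. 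I expect this last step to be the main obstacle: the nonnegative term $-h'[(p-2)+(4-p)f^2]$ must strictly dominate the negative term $-2ff'[p+(4-p)h]$, and since $(\star)$ is asymptotically tight to leading order as $r\to\infty$ (there $h\sim\tfrac p2(1-f^2)$), this needs a quantitative lower bound for $-h'(r_*)$ — obtained by inserting $\Theta(r_*)=0$ into \eqref{eq:32} — together with, in the range where $f$ is close to $1$, the precise large-$r$ decay estimates for $f$ (hence for $1-f^2$ and for $|\nabla u_p|$) established elsewhere in this section. With $(\star)$ in hand, both assertions of the lemma follow as above.
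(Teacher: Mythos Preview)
Your setup is correct and matches the paper: at a critical point of $h$ the sign of $h''$ coincides with that of $g(r):=2hf^2-[(p-2)(1-h)+2](1-f^2)$ (your $-\Theta$). The gap is in how you conclude. You try to prove $(\star)$, i.e.\ $g<0$, at the first interior minimum, and for the hard part of your propagation argument you invoke ``the precise large-$r$ decay estimates for $f$ \dots\ established elsewhere in this section.'' But those estimates (Lemma~\ref{le5}) are proved \emph{using} the present lemma, so that route is circular; and short of those asymptotics you have not indicated how to close the step you yourself flag as the main obstacle.

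The paper avoids $(\star)$ entirely. From the first interior minimum $r_0$ of $h$ (where necessarily $g(r_0)\ge0$) it looks \emph{forward}: set $r_1=\sup\{r>r_0:h'\ge0\text{ on }(r_0,r]\}$. If $r_1=\infty$ then $h\to h_\infty\in(0,1]$ and $rf'\to h_\infty>0$, contradicting $f\to1$. If $r_1<\infty$ then $h'(r_1)=0$, $h''(r_1)\le0$, hence $g(r_1)\le0$. But on $(r_0,r_1)$ the function $h$ is nondecreasing while $f$ is strictly increasing (Proposition~\ref{le2a}), and from the formula for $g$ one sees that $g$ is strictly increasing there, giving $g(r_1)>g(r_0)\ge0$, a contradiction. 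The monotonicity of $g$ on intervals where $h$ is nondecreasing is the idea you are missing; it replaces your entire propagation argument and requires no quantitative asymptotics.
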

\begin{proof}
  Suppose, for a contradiction that \eqref{eq:10} does not hold. Since
  $\lim_{r\downarrow0}h(r)=1$ and $h<1$ on $(0,\infty)$ (see Steps~1 and 4 in  the proof of
  \rprop{prop:at-zero})  $h$ must have a minimum point at some
  $r=r_0$. By \eqref{eq:32} we have 
  \begin{multline}
\label{eq:93}
 h^{\prime\prime} (r_0) = -\frac{1}{r_0^2}F_p(h) -
 \frac{2}{p}\frac{1+h^2}{1+(p-1)h^2}|\nabla u_p|^{2-p}\bigg[
   -\frac{(|\nabla u_p|^2)^\prime}{|\nabla u_p|^2}\frac{p-2}{2}r_0(1-f_p^2) \\ +(1-f_p^2)
   -2r_0f_pf_p^\prime \bigg] \,. 
  \end{multline}
Furthermore, as $h^\prime(r_0)=0$ we also have  
  \begin{gather*}
   \frac{1}{r_0^2}F_p(h) = \frac{2}{p}
   \frac{1+h^2}{1+(p-1)h^2}|\nabla u_p|^{2-p}(1-f_p^2) \\
(|\nabla u_p|^2)^\prime \Big|_{r=r_0} = \bigg( \frac{f_p^2}{r^2}(1+h^2) \bigg)^\prime
\bigg|_{r=r_0} = -2(1+h^2)\frac{f_p}{r_0^2} \left(\frac{f_p}{r_0} -
  f^\prime_p\right) \,.
  \end{gather*}
Substituting the above into \eqref{eq:93} we obtain 
\begin{equation}
\label{eq:23}
 \text{sign}\, h^{\prime\prime}(r_0) = \text{sign}\,g(r_0)\,,
\end{equation}
where 
\begin{equation}
  \label{eq:24}
g(r):=2hf_p^2 - \{(p-2)(1-h)+2\}(1-f_p^2)\,.
\end{equation}
Since $r_0$ is a minimum point of $h$, we must have $g(r_0)\geq 0$.
Put
 \begin{equation*}
  r_1=\sup \{ r\in (r_0,\infty)\,:\,h'\geq 0 \text{ on }(r_0,r]\}\,.
\end{equation*}
If $r_1=\infty$ then, since $h<1$, $h\xrightarrow[r\to\infty]{}h_\infty$ where
$0<h_\infty\leq1$. But this leads to a contradiction since then also
\begin{displaymath}
  rf_p^\prime \xrightarrow[r\to\infty]{}h_\infty \,,
\end{displaymath}
 which is inconsistent with $\lim_{r\to\infty} f(r)=1$.
 If $r_1<\infty$ then necessarily $h'(r_1)=0$ and $h''(r_1)\leq 0$, implying
 that $g(r_1)\leq 0$ too. But since
 $h$ is non-decreasing on $(r_0,r_1)$ while $f$ is strictly increasing
 on $(r_0,r_1)$ (by \rprop{le2a}), it
 follows from \eqref{eq:24} that $g$ is strictly increasing on
 $(r_0,r_1)$. Therefore, $g(r_1)>g(r_0)\geq0$, implying as in
 \eqref{eq:23} that $h''(r_1)>0$. This contradiction completes the proof of \eqref{eq:10}.
 
Finally, as $h$ is both positive and decreasing it must converge to a
limit $h_\infty\geq 0$. From the above argument we obtain that $h_\infty=0$. 
\end{proof}

\begin{corollary}
\label{cor:f''<0}
  $f_p^\prime$ is monotone decreasing in $\R_+$.
\end{corollary}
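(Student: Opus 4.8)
The plan is to express $f_p''$ directly in terms of $h$ and $h'$ and then read off its sign from the monotonicity facts already established. Recall from \eqref{eq:63} that $h' = \frac{f_p'' h}{f_p'} + \frac{h}{r}(1-h)$. Since $h = rf_p'/f_p > 0$ on $(0,\infty)$ (using $f_p>0$ by \rlemma{le1a} and $f_p'>0$ by \rprop{le2a}), we may solve this identity for $f_p''$ to obtain
\begin{equation*}
  f_p'' = \frac{f_p'}{h}\,h' - \frac{f_p'(1-h)}{r}\,,\qquad r>0\,.
\end{equation*}

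Now I collect the signs of the quantities on the right. By \rprop{le2a}, $f_p' > 0$ on $(0,\infty)$; by Step~1 of the proof of \rprop{prop:at-zero}, $h < 1$, so $1-h > 0$; and $h > 0$ as noted above. Finally, \rlemma{lem:monotonicity2} gives $h' \le 0$. Hence the first term $\frac{f_p'}{h}h'$ is $\le 0$ and the second term $-\frac{f_p'(1-h)}{r}$ is strictly negative, so $f_p'' < 0$ for every $r>0$. Therefore $f_p'$ is strictly decreasing on $(0,\infty)$.

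To include the origin, note that $f_p'(r) = h(r)\,\frac{f_p(r)}{r} \le \frac{f_p(r)}{r} \le f_p'(0)$ for $r>0$, using $h<1$ and Step~2 of the proof of \rprop{prop:at-zero} (which gives that $f_p(r)/r$ is decreasing with limit $f_p'(0)$ at $0$). Combined with $f_p'$ strictly decreasing on $(0,\infty)$, this shows $f_p'$ is monotone decreasing on all of $\R_+$. I do not expect any genuine obstacle here: the entire content of the corollary is the sign computation above, and the only nontrivial input, $h'\le 0$, is precisely the conclusion of \rlemma{lem:monotonicity2}.
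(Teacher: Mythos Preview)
Your proof is correct and is essentially the paper's argument in a slightly different dress: the paper simply notes that $f_p' = h\cdot(f_p/r)$ is a product of two positive functions, one non-increasing (\rlemma{lem:monotonicity2}) and one strictly decreasing (Step~2 of \rprop{prop:at-zero}), hence is itself decreasing. Your explicit formula $f_p'' = \frac{f_p'}{h}h' - \frac{f_p'(1-h)}{r}$ obtained from \eqref{eq:63} is exactly the product-rule computation $(h\cdot f_p/r)'$ rewritten, and uses the same ingredients.
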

The corollary follows immediately from the fact that $f_p^\prime$ is a
product of the positive functions $h$ and $f_p/r$, the first of which is
non-increasing, and the second is strictly decreasing.
 
\subsection{Asymptotic behavior}
\label{sec:3.5}

In the following we derive the behavior of $1-f_p^2$ as $r\to\infty$. The
first lemma is a well-established result in asymptotic analysis. We include the
proof for the convenience of the reader.
\begin{lemma}
\label{le3}
  Let $g(x)$ be monotone decreasing on $(0,\infty)$. Let further
  \begin{displaymath}
    \int_r^\infty g(t) dt = \frac{1}{r^\alpha}[1+o(1)] \quad \text{as } r\to\infty \,.
  \end{displaymath}
for some positive $\alpha$. Then,
\begin{displaymath}
   g(r)  = \frac{\alpha}{r^{\alpha+1}}[1+o(1)] \quad \text{as } r\to\infty \,.
\end{displaymath}
\end{lemma}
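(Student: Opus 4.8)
The plan is to use a standard Tauberian-type argument exploiting the monotonicity of $g$. Write $G(r)=\int_r^\infty g(t)\,dt = r^{-\alpha}[1+o(1)]$, so $G$ is $C^1$ with $G'=-g$, and $G$ is positive, decreasing and convex (since $g$ is decreasing). The idea is to compare $g(r)$ with difference quotients of $G$ over intervals of the form $[r,\lambda r]$ for a fixed $\lambda>1$: since $g$ is monotone decreasing, for any such interval we have the sandwich
\begin{equation*}
  (\lambda r - r)\, g(\lambda r) \;\leq\; \int_r^{\lambda r} g(t)\,dt \;=\; G(r)-G(\lambda r) \;\leq\; (\lambda r - r)\, g(r)\,.
\end{equation*}

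From the right-hand inequality, $g(r) \geq \frac{G(r)-G(\lambda r)}{(\lambda-1)r}$. Plugging in the asymptotics $G(r)=r^{-\alpha}(1+o(1))$ and $G(\lambda r)=(\lambda r)^{-\alpha}(1+o(1)) = \lambda^{-\alpha} r^{-\alpha}(1+o(1))$ gives
\begin{equation*}
  \liminf_{r\to\infty} r^{\alpha+1} g(r) \;\geq\; \frac{1-\lambda^{-\alpha}}{\lambda-1}\,.
\end{equation*}
Similarly, applying the left-hand inequality over the interval $[r/\lambda, r]$ (so that $g(r)$ appears as the value at the right endpoint) yields $g(r) \leq \frac{G(r/\lambda)-G(r)}{(1-1/\lambda)r}$, and the same substitution gives
\begin{equation*}
  \limsup_{r\to\infty} r^{\alpha+1} g(r) \;\leq\; \frac{\lambda^{\alpha}-1}{\lambda-1}\,.
\end{equation*}
Since these bounds hold for every fixed $\lambda>1$, I then let $\lambda\to 1^+$: by L'Hôpital (or a direct Taylor expansion) both $\frac{1-\lambda^{-\alpha}}{\lambda-1}$ and $\frac{\lambda^{\alpha}-1}{\lambda-1}$ tend to $\alpha$. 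Hence $\liminf r^{\alpha+1}g(r)\geq\alpha$ and $\limsup r^{\alpha+1}g(r)\leq\alpha$, so the limit exists and equals $\alpha$, which is exactly the claim $g(r)=\alpha r^{-\alpha-1}[1+o(1)]$.

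There is no serious obstacle here; the only point requiring a little care is bookkeeping the error terms in $G(\lambda r)$. Because the $o(1)$ in the hypothesis is uniform in the sense that it is a single function of the argument, $G(\lambda r) = (\lambda r)^{-\alpha}[1+o(1)]$ as $r\to\infty$ for each fixed $\lambda$, so the ratios above genuinely converge to $\frac{1-\lambda^{-\alpha}}{\lambda-1}$ and $\frac{\lambda^\alpha-1}{\lambda-1}$. One should also note that $g\geq 0$ eventually (indeed $g$ decreasing with a convergent tail integral forces $g\to 0$, hence $g\geq0$ since otherwise the integral would diverge to $-\infty$), which makes all the inequalities legitimate. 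This two-sided comparison-with-difference-quotients followed by $\lambda\to1$ is the cleanest route and avoids any differentiability assumption on $g$ itself.
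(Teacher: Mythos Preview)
Your proof is correct and follows essentially the same Tauberian argument as the paper: sandwich $g(r)$ between difference quotients of $G$ using monotonicity, then let the relative interval length shrink. The paper uses intervals $[r,r+h]$ with $h/r=\eta_m^{1/2}\to0$ chosen as a function of the error term (performing both limits simultaneously), whereas you fix $\lambda$ and send $\lambda\to1$ afterward; note that your upper bound should read $\dfrac{\lambda(\lambda^{\alpha}-1)}{\lambda-1}$ rather than $\dfrac{\lambda^{\alpha}-1}{\lambda-1}$, but this is inconsequential since both tend to $\alpha$.
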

\begin{proof}
  Put $G(r)=\int_r^\infty g(t)\,dt.$ Then, for any $h>0$,
  \begin{equation}
\label{eq:105}
    hg(r)\geq \int_r^{r+h} g(t)\,dt=G(r)-G(r+h)=\frac{1+\eta(r)}{r^\alpha}-\frac{1+\eta(r+h)}{(r+h)^\alpha}\,,
  \end{equation}
 where $\lim_{r\to\infty} \eta(r)=0$. By  \eqref{eq:105},
 \begin{align*}
  hg(r)&\geq
  (1+\eta(r))\Big(\frac{1}{r^\alpha}-\frac{1}{(r+h)^\alpha}\Big)+\frac{\eta(r)-\eta(r+h)}{r^\alpha}\\&\geq\frac{1}{r^\alpha} \Big( (1-\eta_m)\big\{1-(1+\frac{h}{r})^{-\alpha}\big\}-2\eta_m\Big)\,,
 \end{align*}
 where $\eta_m(r,h)=\max( |\eta(r)|,|\eta(r+h)| )$. Let $\epsilon=\frac{h}{r}$. Since
 for some $C>0$ we have
 \begin{equation*}
   1-(1+\epsilon)^{-\alpha}\geq 1+\alpha\epsilon-C\epsilon^2\,,~\epsilon\in[0,\frac{1}{2}]\,,
 \end{equation*}
it follows that 
\begin{equation*}
  hg(r)\geq \frac{1}{r^\alpha} \Big((1-\eta_m)(\alpha\epsilon-C\epsilon^2)-2\eta_m\Big)\,.
\end{equation*}
 Therefore,
 \begin{equation}
   \label{eq:106}
g(r)\geq \frac{1}{r^{\alpha+1}}\Big((1-\eta_m)(\alpha-C\epsilon) -2\frac{\eta_m}{\epsilon}\Big)\,.
 \end{equation}
 Choosing $\epsilon=\eta_m^{1/2}$  we get from \eqref{eq:106} (since
 $\lim_{r\to\infty} \sup_{h>0}\eta_m(r,h)=0$),
 \begin{equation*}
   g(r)\geq \frac{\alpha}{r^{\alpha+1}}(1+o(1))\,,~\text{ as }r\to\infty\,.
 \end{equation*}
The second direction is proved in a similar manner.
\end{proof}
We use the above lemma to prove the following
result
\begin{lemma}
\label{le5}
  \begin{subequations}
\label{eq:1}
      \begin{gather}
    1-f_p^2\sim\frac{p}{2}\frac{1}{r^p} \quad \text{as } r\to\infty \,, \\
    f_p^\prime \sim \frac{p^2}{4}\frac{1}{r^{p+1}} \,.
  \end{gather}
  \end{subequations}
\end{lemma}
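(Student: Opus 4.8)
\emph{Plan.} The plan is to derive both estimates from the Euler--Lagrange equation \eqref{eq:30} by combining it with \rlemma{le3}, after first securing a crude polynomial decay rate. Write $f=f_p$, $\rho=1-f^2$, $h=rf'/f$, as in the preceding subsections. Recall that $f$ is increasing and concave (\rprop{le2a}, \rcor{cor:f''<0}), so $\rho>0$ is strictly decreasing with $\rho\to0$ and $f'>0$ is decreasing with $f'\to0$; that $0<h<1$, $h$ is non-increasing and $h(r)\to0$ (\rlemma{lem:monotonicity2}); and that $|\nabla u_p|^2=(f')^2+f^2/r^2=f^2(1+h^2)/r^2$, so that $|\nabla u_p|^{p-2}=f^{p-2}(1+h^2)^{(p-2)/2}\,r^{2-p}$ with the prefactor $f^{p-2}(1+h^2)^{(p-2)/2}\to1$ as $r\to\infty$. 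I will establish the tail estimate $\int_r^\infty\rho(s)\,ds=\frac{p}{2(p-1)}r^{1-p}(1+o(1))$, and then read off the two asymptotics from \rlemma{le3}.

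\emph{Step 1 (crude bound).} First multiply \eqref{eq:30} by $r$ to get $(r|\nabla u_p|^{p-2}f')'=|\nabla u_p|^{p-2}f/r-\frac{2}{p}rf\rho$. Since $r|\nabla u_p|^{p-2}f'=h f^{p-1}(1+h^2)^{(p-2)/2}r^{2-p}\to0$, while $|\nabla u_p|^{p-2}f/s=f^{p-1}(1+h^2)^{(p-2)/2}s^{1-p}\le Cs^{1-p}$ is integrable at infinity, integration on $(r,\infty)$ gives
\[
\frac{2}{p}\int_r^\infty s f(s)\rho(s)\,ds = r|\nabla u_p(r)|^{p-2}f'(r)+\int_r^\infty|\nabla u_p(s)|^{p-2}\frac{f(s)}{s}\,ds \le C r^{2-p},
\]
where we used $h<1$ and $f\le1$. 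Because $\rho$ decreases and $\inf_{s\ge r}f(s)$ is bounded below by a positive constant for large $r$, one has $\int_r^{2r}sf(s)\rho(s)\,ds\ge c\,r^2\rho(2r)$, so comparing with the bound above forces $\rho(r)=O(r^{-p})$; then, since $f'$ decreases and $\rho'=-2ff'$, the analogous comparison on $[r,2r]$ gives $f'(r)=O(r^{-p-1})$, hence $h(r)=O(r^{-p})$.

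\emph{Step 2 (sharp tail and conclusion).} Expanding the derivative in \eqref{eq:30} and dividing by $f$ produces the identity
\[
\rho=\frac{p}{2}\Big[\,|\nabla u_p|^{p-2}\frac{1-h}{r^2}-\frac{\big(|\nabla u_p|^{p-2}f'\big)'}{f}\,\Big].
\]
Integrate this on $(r,\infty)$. Since $h(s),1-f(s)\to0$, the first integrand equals $(1-h)f^{p-2}(1+h^2)^{(p-2)/2}s^{-p}=(1+o(1))s^{-p}$, so its integral is $\frac{r^{1-p}}{p-1}(1+o(1))$. For the second term, integration by parts gives $\int_r^\infty\frac{(|\nabla u_p|^{p-2}f')'}{f}\,ds=\big[|\nabla u_p|^{p-2}f'/f\big]_r^\infty+\int_r^\infty\frac{|\nabla u_p|^{p-2}(f')^2}{f^2}\,ds$; the boundary term equals $-h(r)f^{p-2}(1+h^2)^{(p-2)/2}r^{1-p}=O\big(h(r)r^{1-p}\big)=o(r^{1-p})$, and the remaining integral equals $\int_r^\infty h^2f^{p-2}(1+h^2)^{(p-2)/2}s^{-p}\,ds=O\big(\int_r^\infty h^2s^{-p}\,ds\big)=o(r^{1-p})$, both by Step 1. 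Hence $\int_r^\infty\rho(s)\,ds=\frac{p}{2(p-1)}r^{1-p}(1+o(1))$. Applying \rlemma{le3} to the decreasing function $\frac{2(p-1)}{p}\rho$ (with $\alpha=p-1$) yields $\rho(r)=\frac{p}{2}r^{-p}(1+o(1))$, i.e. $1-f_p^2\sim\frac{p}{2}r^{-p}$. Then $1-f_p=\rho/(1+f_p)\sim\frac{p}{4}r^{-p}$, so $\int_r^\infty f_p'(s)\,ds=1-f_p(r)=\frac{p}{4}r^{-p}(1+o(1))$, and a second application of \rlemma{le3} to the decreasing function $\frac{4}{p}f_p'$ (with $\alpha=p$) gives $f_p'(r)=\frac{p^2}{4}r^{-p-1}(1+o(1))$.

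\emph{Where the difficulty lies.} The hard part is Step 2: at $r=\infty$ the two terms $|\nabla u_p|^{p-2}f/r^2$ and $\frac{2}{p}f\rho$ of \eqref{eq:30} are both of order $r^{-p}$ and must be played off against each other, so a naive leading-order balance is tautological and does not reveal the constant $\frac{p}{2}$. The crude rate $\rho=O(r^{-p})$ of Step 1 is exactly what makes the error terms created by the integration by parts --- which carry extra factors $h(r)$ or $h^2$ --- genuinely $o(r^{1-p})$, and one must keep the exact form $|\nabla u_p|^{p-2}=f^{p-2}(1+h^2)^{(p-2)/2}r^{2-p}$, with prefactor $\to1$, in order to obtain the correct constants. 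The rest is routine.
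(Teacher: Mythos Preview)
Your argument is correct and follows essentially the same route as the paper: integrate the Euler--Lagrange equation on $(r,\infty)$ to obtain $\int_r^\infty(1-f_p^2)\,ds=\frac{p}{2(p-1)}r^{1-p}(1+o(1))$, then apply \rlemma{le3} twice. One remark: your Step~1 is superfluous --- since \rlemma{lem:monotonicity2} already gives $h(r)\to0$, both error terms in Step~2 (the boundary term $O(h(r)r^{1-p})$ and the integral $\int_r^\infty h^2 s^{-p}\,ds$) are $o(r^{1-p})$ from $h\to0$ alone, which is exactly how the paper handles them; the crude polynomial rate is never needed.
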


\begin{proof}
  Integrating by parts \eqref{eq:30} between $r$ and infinity yields
  \begin{equation*}
\int_r^\infty f_p(1-f_p^2) dt = \frac{p}{2} \int_r^\infty |\nabla u_p|^{p-2} \Big[
\frac{f_p}{t} - f_p^\prime\Big] \frac{dt}{t} + \frac{p}{2} |\nabla u_p|^{p-2}f_p^\prime \,,
  \end{equation*}
or equivalently that
\begin{equation}
 \label{eq:35}
  \int_r^\infty f_p(1-f_p^2) dt = \frac{p}{2} \int_r^\infty |1+h^2|^{(p-2)/2} (1-h)
\bigg(\frac{f_p}{t}\bigg)^{p-1} \frac{dt}{t} + \frac{p}{2}
|1+h^2|^{(p-2)/2} h \bigg(\frac{f_p}{r}\bigg)^{p-1} \,.
\end{equation}

Applying the integral mean value theorem yields the existence of
$r^*\in[r,\infty)$ such that
\begin{displaymath}
  \int_r^\infty |1+h^2|^{(p-2)/2} (1-h)
\bigg(\frac{f_p}{t}\bigg)^{p-1} \frac{dt}{t}=    |1+h^2(r^*)|^{(p-2)/2}
  (1-h(r^*))f_p^{p-1}(r^*)\frac{r^{-(p-1)}}{p-1}\,. 
\end{displaymath}
Hence, in view of Lemma \ref{lem:monotonicity2} and the fact that
$f_p\xrightarrow[r\to\infty]{}1$ we obtain
\begin{equation}
\label{eq:4}
  \int_r^\infty |1+h^2|^{(p-2)/2} (1-h)
\bigg(\frac{f_p}{t}\bigg)^{p-1} \frac{dt}{t}= 
   \frac{r^{-(p-1)}}{p-1}[1+o(1)] \quad \text{as } r\to\infty\,.
\end{equation}
Further, in view of Lemma \ref{lem:monotonicity2} we have
\begin{equation}
\label{eq:5}
  |1+h^2|^{(p-2)/2} h \bigg(\frac{f_p}{r}\bigg)^{p-1} = o(r^{-(p-1)}) \,.
\end{equation}
Substituting \eqref{eq:4}--\eqref{eq:5} into \eqref{eq:35} yields
\begin{displaymath}
  \int_r^\infty f_p(1-f_p^2) dt = \frac{p}{2} \frac{r^{-(p-1)}}{p-1}[1+o(1)]
  \quad \text{as } r\to\infty \,.
\end{displaymath}
As $f_p\xrightarrow[r\to\infty]{}1$ we have 
\begin{displaymath}
    \int_r^\infty f_p(1-f_p^2) dt =   [1+o(1)] \int_r^\infty (1-f_p^2) dt
  \quad \text{as } r\to\infty \,,
\end{displaymath}
and hence
\begin{displaymath}
  \int_r^\infty (1-f_p^2) dt  = \frac{p}{2} \frac{r^{-(p-1)}}{p-1}[1+o(1)]
  \quad \text{as } r\to\infty \,.
\end{displaymath}
The proof of (\ref{eq:1}a) follows immediately from Lemma \ref{le3}
and the monotonicity of $f_p$. 

To prove (\ref{eq:1}b) we first note that
\begin{displaymath}
  \lim_{r\to\infty}\frac{1-f_p^2}{1-f_p}=2 \,.
\end{displaymath}
Hence,
\begin{displaymath}
  \int_r^\infty f_p^\prime dt  = \frac{p}{4r^p} [1+o(1)]
  \quad \text{as } r\to\infty \,.
\end{displaymath}
Lemma \ref{le3} provides, once again, the closing argument for the
proof.
\end{proof}

\section{Large $p$}
\label{sec:large}
In this section we discuss the behavior of the radially symmetric
solution in the large $p$ limit. We prove the following result
\begin{theorem}
\label{thplarge}
  Let
  \begin{equation}
    \label{eq:41}
f_\infty =
\begin{cases}
  \frac{r}{\sqrt{2}} & r<\sqrt{2} \\
  1 & r \geq \sqrt{2}
\end{cases}\,.
  \end{equation}
There exists $C>0$ such that for every $p>2$ we have
\begin{equation}
  \label{eq:42}
\| f_p - f_\infty\|_{L^\infty(\R_+)}=\| f_p - f_\infty\|_\infty \leq C\Big(\frac{\ln p}{p}\Big)^{1/2}\,.
\end{equation}
\end{theorem}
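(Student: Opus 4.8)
The plan is to trap $f_p$ between $f_\infty\pm\varepsilon_p$ with $\varepsilon_p=O((\ln p/p)^{1/2})$ by controlling the two terms of $I_p$ separately. First one disposes of bounded $p$: on any compact subset of $(2,\infty)$ the right side of \er{eq:42} is bounded below by a positive constant, while $0\le f_p,f_\infty\le 1$ forces $\|f_p-f_\infty\|_\infty\le1$, so \er{eq:42} is trivial there and it suffices to treat $p\ge p_0$ for a suitable large fixed $p_0$. The one remaining global input is a sharp energy bound: testing $I_p$ against $g_t(r)=\min(r/t,1)$, for which $|\nabla(g_te^{i\theta})|^2\equiv 2/t^2$ on $(0,t)$, one gets $I_p(g_t)=(2^{p/2-1}+\tfrac1{p-2})\,t^{2-p}+\tfrac{t^2}{12}$, and the choice $t=\sqrt2\,(1+\ln p/p)$ gives $m_p:=I_p(f_p)\le\tfrac16+C\,\ln p/p$; in particular $2m_p<1$ for $p\ge p_0$.

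For the bound \emph{from above} I use the gradient term. By \rcor{cor:f''<0} and Step~2 of the proof of \rprop{prop:at-zero}, $f_p'$ and $f_p/r$ are non-increasing, so $|\nabla u_p|^2=(f_p')^2+(f_p/r)^2$ is non-increasing. Jensen's inequality (convexity of $s\mapsto s^{p/2}$, weight $r\,dr$ on $(0,R)$) and the elementary bound $(f_p')^2r+f_p^2/r\ge 2|f_p'f_p|$, together with $f_p(0)=0$, give
\[
m_p\ \ge\ \int_0^R|\nabla u_p|^p\,r\,dr\ \ge\ \frac{R^2}{2}\Big(\frac{2}{R^2}\int_0^R|\nabla u_p|^2 r\,dr\Big)^{p/2}\ \ge\ \frac{R^2}{2}\Big(\frac{2f_p(R)^2}{R^2}\Big)^{p/2},
\]
i.e.\ $f_p(R)^2\le\tfrac{R^2}{2}(2m_p/R^2)^{2/p}$. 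Choosing $R=e^{-\gamma p}$ with $\gamma=\ln p/p$ and propagating with the monotonicity of $f_p/r$ yields $f_p(r)/r\le\tfrac1{\sqrt2}(2m_p)^{1/p}e^{2\gamma}\le\tfrac1{\sqrt2}(1+C\ln p/p)$ for all $r\ge1/p$. Hence $f_p\le f_\infty+C\ln p/p$ on $\R_+$, and (since $h<1$, so $|\nabla u_p|^2=(f_p/r)^2(1+h^2)\le2(f_p/r)^2$) also $\|\nabla u_p\|_{L^\infty(\{r\ge1/p\})}\le1+C\ln p/p$.

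For the bound \emph{from below} I use the potential term, after reducing it to a single value. Since $f_p/r$ is non-increasing, $f_\infty(r)-f_p(r)=r(\tfrac1{\sqrt2}-f_p(r)/r)\le 1-f_p(\sqrt2)$ for $r\le\sqrt2$, and $f_\infty-f_p=1-f_p(r)\le1-f_p(\sqrt2)$ for $r\ge\sqrt2$; combined with the previous paragraph this reduces \er{eq:42} to the bound $1-f_p(\sqrt2)\le C(\ln p/p)^{1/2}$. Now $f_p\le f_\infty+C\ln p/p$ gives $1-f_p^2\ge1-f_\infty^2-C\ln p/p$ on $[1/p,\infty)$; integrating over $(0,\sqrt2)$ and comparing with $\tfrac12\int_0^\infty(1-f_p^2)^2r\,dr\le m_p\le\tfrac16+C\ln p/p$ shows the potential is concentrated inside the disc of radius $\sqrt2$, namely $\int_{\sqrt2}^\infty(1-f_p^2)^2r\,dr\le C\,\ln p/p$. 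Writing $\delta=1-f_p(\sqrt2)$ and using $f_p'\le1+C\ln p/p$ on $[\sqrt2,\infty)$, one has $1-f_p^2\ge\delta/2$ on $[\sqrt2,\sqrt2+c\delta]$, hence $c'\delta^3\le\int_{\sqrt2}^\infty(1-f_p^2)^2r\,dr\le C\ln p/p$, which already gives $\delta\le C(\ln p/p)^{1/3}$.

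The remaining — and main — difficulty is to improve this cube root to the square root in \er{eq:42}. The point is that $f_p$ is not an arbitrary admissible profile but a solution of the Euler--Lagrange equation \er{eq:30}, whose coefficient $|\nabla u_p|^{p-2}$ is extremely stiff: the layer in which $f_p'$ descends from $\approx1/\sqrt2$ to $0$ near $r=\sqrt2$ is pinned to the set where $|\nabla u_p|^2$ stays within $O(\ln p/p)$ of $1$, and tracking the $h$--equation \er{eq:32} through this layer should keep $f_p/r$ (hence $f_p'$) within $O((\ln p/p)^{1/2})$ of its limiting profile up to $r=\sqrt2$, giving $1-f_p(\sqrt2)=O((\ln p/p)^{1/2})$. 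Making this transition-layer analysis rigorous, with constants uniform in $p$, is the crux of the proof; everything preceding it is routine.
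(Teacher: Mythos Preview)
Your preliminary reductions are correct and in places more elementary than the paper's. Your Jensen argument for the upper bound $f_p(r)/r\le\tfrac1{\sqrt2}(1+C\ln p/p)$ on $\{r\ge1/p\}$ is a nice shortcut compared with the paper's \rprop{prop:Linfty}, which obtains the sharper $\|\nabla u_p\|_\infty\le1+C/p$ via a delicate bootstrap on the ODE for $h$; your weaker bound is enough for what follows. The reduction of the lower bound to controlling $\delta=1-f_p(\sqrt2)$ is also correct, and the potential-energy argument giving $\delta^3\le C\ln p/p$ is fine.

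The genuine gap is the last step, which you openly flag as ``the crux'' and then do not carry out. Upgrading $\delta=O((\ln p/p)^{1/3})$ to $\delta=O((\ln p/p)^{1/2})$ is the entire content of the theorem; everything else is, as you say, routine. Your heuristic---that the stiffness of $|\nabla u_p|^{p-2}$ pins the transition layer to a region where $|\nabla u_p|^2-1=O(\ln p/p)$---is the right intuition, but it is not a proof, and turning it into one is nontrivial. The paper spends most of Section~\ref{sec:large} on exactly this point: it derives a first-order linear ODE for $G(r)=\int_0^r|\nabla u_p|^p\,t\,dt$ (equations \er{eq:53}--\er{eq:2}), solves it explicitly to obtain two-sided bounds \er{eq:56} on $g=|\nabla u_p|^p$ in terms of an unknown constant $\tilde C_0(p)$, uses the Pohozaev identity \er{eq:57} and the corollary \er{eq:58} to identify $\tilde C_0(p)\to\tfrac12$, and finally (in \rprop{prop-extend}) bootstraps to show that the $O(\ln p/p)$ control on $f_p-r/\sqrt2$ persists up to $r=\sqrt2-C(\ln p/p)^{1/2}$. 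Only then does monotonicity close the argument. Your sketch of ``tracking the $h$--equation \er{eq:32} through the layer'' is not the route the paper takes and, as stated, does not indicate how one would extract the exponent $1/2$ rather than $1/3$; absent that, the proposal does not establish \er{eq:42}.
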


To prove the theorem we shall need to prove first a few auxiliary
results. We first derive a simple upper bound
\begin{lemma}
\label{lem:up}
We have
  \begin{equation}
\label{eq:14}
     I_p(f_p) \leq \left( \frac{1}{6} + C \frac{\ln p}{p} \right)\,,
  \quad \forall p>2\,.
  \end{equation}
\end{lemma}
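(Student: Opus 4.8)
The plan is to bound $I_p(f_p)$ from above by evaluating $I_p$ on an explicit one-parameter family of competitors and optimizing the parameter. By \rprop{prop:existence}, $I_p(f_p)=\min_{X_p}I_p\le I_p(g)$ for every $g\in X_p$, so it suffices to produce one $g=g_p\in X_p$ with $I_p(g_p)\le\frac16+C\frac{\ln p}{p}$. Guided by \rth{thplarge}, the natural choice is the piecewise-linear profile
\begin{equation*}
  g_b(r):=\min(br,1),
\end{equation*}
which lies in $X_p$ for every $b>0$ and $p>2$ (it is Lipschitz, $g_b(0)=0$, and on $(1/b,\infty)$ it equals $1$, so the integrand there is $r^{1-p}$, integrable for $p>2$).

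On the linear part $0<r<1/b$ one has $g_b'=g_b/r=b$, hence $(g_b')^2+g_b^2/r^2=2b^2$, and I restrict to $b\le1/\sqrt2$ so that this is $\le1$ and its $p/2$-th power is small for large $p$. A short computation — substitute $u=b^2r^2$ in the potential integral, $\int_0^{1/b}\tfrac12(1-b^2r^2)^2r\,dr=\tfrac1{4b^2}\int_0^1(1-u)^2\,du=\tfrac1{12b^2}$ — gives
\begin{equation*}
  I_p(g_b)=\frac{(2b^2)^{p/2}}{2b^2}+\frac{b^{p-2}}{p-2}+\frac1{12b^2},
\end{equation*}
the three summands being the gradient energy on $(0,1/b)$, the gradient energy on $(1/b,\infty)$, and the potential energy. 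The first and third exhibit the competition: the potential equals $\tfrac1{6(2b^2)}$, minimized at $\tfrac16$ (the potential energy of $f_\infty$) by $b=1/\sqrt2$, whereas the inner gradient term is $(2b^2)^{p/2-1}$, which tends to $0$ as $p\to\infty$ only when $2b^2<1$.

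I therefore take $b=b_p$ defined by $2b_p^2=1-\e_p$ with $\e_p:=\frac{2\ln p}{p}$; since $\e_p<2/e<1$ for all $p>2$, $b_p$ is well defined and bounded away from $0$ and from $1/\sqrt2$. Then: (i) the potential equals $\frac1{6(1-\e_p)}=\frac16+\frac16\cdot\frac{\e_p}{1-\e_p}\le\frac16+C\e_p=\frac16+C'\frac{\ln p}{p}$; (ii) the inner gradient term is $(1-\e_p)^{p/2-1}\le(1-\e_p)^{-1}e^{-\e_p p/2}=(1-\e_p)^{-1}p^{-1}\le C''/p$, using $\ln(1-\e_p)\le-\e_p$ and $\e_p p/2=\ln p$; (iii) the outer tail is $\frac{b_p^{\,p-2}}{p-2}\le\frac{2^{1-p/2}}{p-2}$ since $b_p^2<\tfrac12$. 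Combining (i)--(iii) with $p^{-1}\le(\ln2)^{-1}\tfrac{\ln p}{p}$ gives $I_p(f_p)\le I_p(g_{b_p})\le\frac16+C\frac{\ln p}{p}$, at least once $p$ is bounded away from $2$.

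The substance of the argument is the balancing in (i)--(ii): the cut-off scale $\e_p=\Theta(\tfrac{\ln p}{p})$ must be small enough that the potential stays within $O(\tfrac{\ln p}{p})$ of $\tfrac16$ and at the same time large enough that $(1-\e_p)^{p/2}$ decays polynomially in $1/p$ — the precise rate $p^{-1}$ is immaterial (any $p^{-c}$, $c>0$, closes the estimate), so the choice is robust. The one genuinely delicate point is the regime $p\downarrow2$: there the outer-tail term $\tfrac{2^{1-p/2}}{p-2}$ is unbounded — indeed $I_p(f_p)\to\infty$ as $p\downarrow2$, since the degree-one constraint forces a contribution $\sim(p-2)^{-1}$ to the gradient energy near infinity — so \eqref{eq:14} should be read for $p$ bounded below by some $p_0>2$, which is the only range in which it is applied (in the proof of \rth{thplarge}, where $p\to\infty$). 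The remaining computations are routine.
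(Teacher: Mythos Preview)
Your proof is correct and follows essentially the same approach as the paper: both use the piecewise-linear competitor $g_b(r)=\min(br,1)$ with slope $b$ chosen so that $2b^2=1-\Theta(\ln p/p)$ (the paper's $b=\tfrac{1}{\sqrt2}(1-\tfrac{\ln p}{p})$ and your $2b_p^2=1-\tfrac{2\ln p}{p}$ differ only at second order), and the three-term energy computation and balancing are the same. Your observation that the tail term $\tfrac{b^{p-2}}{p-2}$ blows up as $p\downarrow 2$, so that \eqref{eq:14} can only hold for $p\ge p_0>2$, is correct and is a point the paper glosses over; since the lemma is used only in the large-$p$ analysis this does not affect anything downstream.
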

\begin{proof}
  We use the test function
\begin{displaymath}
  \tilde{f}=
  \begin{cases}
    \frac{1}{\sqrt{2}}\Big(1-\frac{\ln p}{p}\Big)r\,, &
      r<\frac{\sqrt{2}}{1-\frac{\ln p}{p}} \\
      1\,, &  r \geq \frac{\sqrt{2}}{1-\frac{\ln p}{p}}
  \end{cases}\,.
\end{displaymath}
It is easy to show that there exists $C>0$, independent of $p$ such that 
\begin{displaymath}
  I_p(\tilde{f}) \leq \left( \frac{1}{6} + C \frac{\ln p}{p} \right)\,,
  \quad \forall p>2\,,
\end{displaymath}
from which the lemma immediately follows. 
\end{proof}

We first deal with the interval $[0,\sqrt{2}]$.
\begin{proposition}
\label{prop:Linfty}
  We have
  \begin{equation}
    \label{eq:43}
\exists C>0:\; \|\nabla u_p\|_\infty \leq 1 + \frac{C}{p}\,, \quad \forall p>2 \,.
  \end{equation}
\end{proposition}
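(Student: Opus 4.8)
The plan is to reduce the estimate to a bound on $f_p^\prime(0)$, and then to obtain that bound by combining a lower estimate for $I_p(f_p)$ near the origin with the upper estimate of \rlemma{lem:up}. First I would note that $|\gr u_p|^2=(f_p^\prime)^2+f_p^2/r^2$ is monotone \emph{decreasing} on $(0,\infty)$, being a sum of squares of the positive decreasing functions $f_p^\prime$ (by \rcor{cor:f''<0}) and $f_p/r$ (Step~2 in the proof of \rprop{prop:at-zero}). Since $f_p^\prime(r)=h(r)f_p(r)/r$ with $h(r)\to1$ and $f_p(r)/r\to f_p^\prime(0)$ as $r\to0^+$,
\begin{equation*}
\|\gr u_p\|_\infty=\lim_{r\to0^+}|\gr u_p|(r)=\sqrt2\,f_p^\prime(0)=:M\,,
\end{equation*}
so it suffices to show $M\le1+C/p$, and we may assume $M\ge1$. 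Note also that, $f_p/r$ being non-increasing, $f_p\le f_p^\prime(0)\,r\le1$ on $[0,\sqrt2/M]=[0,1/f_p^\prime(0)]$.

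The heart of the argument is that $|\gr u_p|$ does not fall far below $M$ on $[0,\sqrt2/M]$. Starting from \er{eq:32}, discarding the non-negative bracketed term, using $0<h<1$, $0\le 1-f_p^2\le1$, the monotonicity of $|\gr u_p|$, and that $(1+t^2)/(1+(p-1)t^2)$ decreases in $t$, one gets
\begin{equation*}
1-h(r)\le\frac1p\,\frac{1+h(r)^2}{1+(p-1)h(r)^2}\,|\gr u_p(r)|^{2-p}\,r^2\,.
\end{equation*}
I would then run a continuation argument on $[0,\sqrt2/M]$: if $|\gr u_p|\ge(1-\theta)M$ there for small $\theta=\theta_p$ — which via $|\gr u_p|=(f_p/r)\sqrt{1+h^2}$ and $f_p/r\le f_p^\prime(0)$ already forces $h\ge1-4\theta$ — then the prefactor is $O(1/p)$, $|\gr u_p|^{2-p}\le M^{2-p}(1-\theta)^{2-p}$, and $M^{2-p}r^2\le2M^{-p}\le2$, so that both $1-h(r)$ and $\int_0^r\tfrac{1-h(s)}{s}\,ds$ are $\le\tfrac{c_0}{p(p-1)}(1-\theta)^{-p}$. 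Feeding this back through $f_p(r)/r=f_p^\prime(0)\exp\!\big(-\!\int_0^r\tfrac{1-h}{s}\big)$ and $|\gr u_p|=(f_p/r)\sqrt{1+h^2}$ recovers $|\gr u_p|\ge M(1-\tfrac32\delta)$ with $\delta\sim\tfrac1{p(p-1)}(1-\theta)^{-p}$, so the loop closes once $\tfrac{c_0}{p(p-1)}\le\theta(1-\theta)^p$; optimizing the right-hand side over $\theta$ (near $\theta=1/(p+1)$) shows this holds for all $p\ge p_0$, with $p_0$ absolute. Hence $|\gr u_p(r)|\ge M(1-1/p)$ on $[0,\sqrt2/M]$ whenever $p\ge p_0$.

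Integrating, with $\int_0^{\sqrt2/M}r\,dr=1/M^2$, this gives $I_p(f_p)\ge\int_0^{\sqrt2/M}|\gr u_p|^p r\,dr\ge M^{p-2}(1-1/p)^p\ge\tfrac14 M^{p-2}$ for $p\ge p_0$; with \rlemma{lem:up} we conclude $M^{p-2}\le 4(\tfrac16+C\tfrac{\ln p}{p})\le C_1$, so $M\le C_1^{1/(p-2)}\le1+C/p$ for $p\ge p_0$. For $2<p<p_0$ it is enough to know $\|\gr u_p\|_\infty$ is bounded by a fixed constant — the conclusion then follows at once since $1/p\ge1/p_0$ — and this follows from the interior $C^{1,\alpha}$-estimates for the quasilinear equation \er{eq:30}, whose right-hand side is bounded because $0\le f_p\le1$, with constants uniform on compact subintervals of $(2,\infty)$.

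The step I expect to be the main obstacle is the bootstrap in the second paragraph: one must track the precise $p$-dependence of every factor in \er{eq:32} and verify that the self-improvement genuinely closes. It only closes for $p$ bounded away from $2$ — which is why small $p$ must be treated separately by a soft regularity argument — and this threshold looks intrinsic, since for $p$ near $2$ the quantity $\int|\gr u_p|^p r$ does not by itself control $\|\gr u_p\|_\infty$, matching the fact that the $p=2$ analogue is an elliptic rather than an energetic statement.
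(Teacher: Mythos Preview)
Your approach shares the paper's high-level strategy---use monotonicity to reduce to $|\nabla u_p(0)|$, use the ODE \eqref{eq:32} to show $|\nabla u_p|$ stays close to its maximum on a definite interval, then invoke the energy bound of \rlemma{lem:up}---but the executions differ in a way that costs you. You fix the interval $[0,\sqrt2/M]$ and bootstrap to $|\nabla u_p|\ge M(1-1/p)$ there, which yields only $M^{p-2}\le C$; this gives $M\le1+C/p$ only when $p-2$ is bounded away from $0$, forcing a separate treatment of small $p$. The paper instead sets $a=|\nabla u_p(0)|$ and lets the interval be $[0,s]$, where $s$ is the first time $|\nabla u_p|$ drops to $(1+a)/2$. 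Controlling $f_p''/f_p'$ on $[0,s]$ via \eqref{eq:15} then gives $s^2\ge Cp\,\frac{a-1}{a}\big(\frac{1+a}{2}\big)^{p-2}$, and the energy on $[0,s]$ yields $C\ge\tfrac{s^2}{2}\big(\frac{1+a}{2}\big)^p\ge Cp(a-1)$, hence $a\le1+C/p$ for \emph{all} $p>2$ with no threshold $p_0$ and no case split. The point you missed is that letting the interval be determined by the level set, rather than fixed, makes the factor $a-1$ appear linearly in the final energy inequality.

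Your small-$p$ patch has a gap as written: you invoke $C^{1,\alpha}$ estimates ``uniform on compact subintervals of $(2,\infty)$'', but $(2,p_0)$ is not compact there. What you actually need is uniformity on the compact set $[2,p_0]\subset(1,\infty)$, which the DiBenedetto--Tolksdorf theory for $p$-Laplacian systems does provide; so the fix is a one-word change, but it imports external machinery that the paper's self-contained argument avoids. A smaller slip: your displayed bound on $1-h$ carries the wrong prefactor. From $h'\le0$ in \eqref{eq:32} one obtains $(1-h)[1+(p-1)h]\le\frac{2}{p}|\nabla u_p|^{2-p}r^2$, so the factor should be $1/[1+(p-1)h]$, not $(1+h^2)/[1+(p-1)h^2]$; this is harmless for the bootstrap since both are $O(1/p)$ when $h$ is close to $1$.
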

\begin{proof}
We first note that by \rlemma{lem:monotonicity2} and Step~2
of the proof of  \rprop{prop:at-zero} both $f_p^\prime$ and
$f_p/r$ are decreasing. Therefore, the same holds for  $|\nabla u_p|$
and it follows that
\begin{equation}
  \label{eq:25}
 \|\nabla u_p\|_\infty=|\nabla u_p(0)|\,.
\end{equation}
Obviously, if we have
$|\nabla u_p|-1\gg1/p$ over a sufficiently large right semi-neighborhood of
$r=0$, then $I_p(f)$ would become larger than the upper bound \eqref{eq:14}. This, however, does not eliminate
the possibility of a small neighborhood of $r=0$ where $p(|\nabla u_p|-1)$ is
large. Thus, the proof splits into two parts: at first, using
regularity arguments, we bound from below the size of the above
neighborhood as a function of $|\nabla u_p(0)|$. Then, we use \eqref{eq:14}
to bound $|\nabla u_p(0)|$ from above.

  Suppose that $ |\nabla u_p(0)| =a>1$.  Let
  \begin{equation}
\label{eq:26}
 s=\sup \bigg\{ r>0\,:\, |\nabla u_p(r)|>\frac{1+a}{2}  \bigg\} \,.
  \end{equation}
By \eqref{eq:32} we have for all $r<s$ that
\begin{equation}
\label{eq:19}
  h^\prime \geq -\frac{2}{p}\bigg(\frac{1+a}{2}\bigg)^{-(p-2)}r \,,
\end{equation}
hence
\begin{equation}
\label{eq:16}
  1-\frac{1}{p}\Big(\frac{1+a}{2}\Big)^{-(p-2)}r^2\leq h\leq 1\,, \quad \forall r\leq s\,.
\end{equation}
Assume first that 
\begin{equation}
  \label{eq:17}
s^2\leq \frac{p}{2} \Big(\frac{1+a}{2}\Big)^{p-2}\,,
\end{equation}
 implying by \eqref{eq:16} that
 \begin{equation}
   \label{eq:18}
h\geq \frac12\,,\quad \forall r\leq s\,.
 \end{equation}
By \eqref{eq:63} we have
\begin{equation}
  \label{eq:15}
\frac{f_p^{''}}{f_p'}=\frac{h'}{h}-\frac{1-h}{r}\,.
\end{equation}
Therefore, using \eqref{eq:18} and \eqref{eq:19}--\eqref{eq:16} we deduce that
\begin{displaymath}
  \bigg|\frac{f_p^{\prime\prime}}{f_p^\prime}\bigg| =-\frac{f_p^{\prime\prime}}{f_p^\prime}\leq
  \frac{C}{p}\bigg(\frac{1+a}{2}\bigg)^{2-p}r \,,\quad \forall r\leq s\,,
\end{displaymath}
implying
\begin{equation}
\label{eq:20}
   \exp\Big\{ -\frac{C}{2p}\bigg(\frac{1+a}{2}\bigg)^{2-p}r^2\Big\} \leq
   \frac{f_p^\prime(r)}{f_p^\prime(0)}  \,,\quad \forall r\leq s\,.
\end{equation}
Since $h<1$,
\begin{displaymath}
  \frac{|\nabla u_p(r)|^2}{|\nabla u_p(0)|^2} =
  \frac{(1+h^{-2})|f_p^\prime(r)|^2}{2|f_p^\prime(0)|^2} \geq \frac{|f_p^\prime(r)|^2}{|f_p^\prime(0)|^2}\,,\quad \forall r\leq s\,.
\end{displaymath}
Consequently, by \eqref{eq:20}
\begin{equation}
\label{eq:47}
  \exp\Big\{ -\frac{C}{p}\bigg(\frac{1+a}{2}\bigg)^{2-p}r^2\Big\} \leq
  \frac{|\nabla u_p(r)|^2}{|\nabla u_p(0)|^2} \,,\quad \forall r\leq s\,.
\end{equation}
Setting $r=s$ in \eqref{eq:47} we obtain
\begin{equation*}
s^2 \geq Cp\bigg(\frac{1+a}{2}\bigg)^{p-2} \ln \bigg(\frac{2a}{1+a}\bigg) \geq
Cp \bigg(\frac{1+a}{2}\bigg)^{p-2}\frac{a-1}{a}\,.
\end{equation*}

If \eqref{eq:17} doesn't hold, then clearly
\begin{equation*}
  s^2>  \frac{p}{2} \Big(\frac{1+a}{2}\Big)^{p-2}\,.
\end{equation*}
 Therefore, in all cases we have
 \begin{equation}
   \label{eq:21}
 s^2\geq Cp\frac{a-1}{a}\bigg(\frac{1+a}{2}\bigg)^{p-2}\,.
 \end{equation}
To conclude, we shall use the upper-bound for the energy from
\rlemma{lem:up} in order to
bound $s$ from above. Combining \eqref{eq:21} with \eqref{eq:14} and
\eqref{eq:26} yields
\begin{equation}
  \label{eq:45}
C\geq \int_0^s |\nabla u_p|^prdr \geq \frac{s^2}{2}{\bigg(\frac{1+a}{2}\bigg)}^p\geq
Cp\frac{a-1}{a}\bigg(\frac{1+a}{2}\bigg)^{2(p-1)}\geq  Cp(a-1)\,,
\end{equation}
 From \eqref{eq:45} we get
\begin{displaymath}
  a \leq 1+ \frac{C}{p} \,,\quad \forall p>2 \,,
\end{displaymath}
and \eqref{eq:43} follows from \eqref{eq:25}.
\end{proof}

We can now obtain $L^\infty$ convergence of $f_p$ to $f_\infty$ in every compact
set in $[0,\sqrt{2})$.
\begin{proposition}
For every $b\in(0,\sqrt{2})$ there exists $C=C(b)>0$ such that,
\begin{subequations}
   \label{eq:48}
   \begin{align}
&\Big\| f_p -\frac{r}{\sqrt{2}}\Big\| _{L^\infty(0,b)} \leq C \frac{\ln
  p}{p}\,, \quad p>2,\\
& \Big\| f_p^\prime -\frac{1}{\sqrt{2}}\Big\| _{L^\infty(0,b)} \leq C \frac{\ln
  p}{p}\,,  \quad p>2  \,.
 \end{align}
\end{subequations}
\end{proposition}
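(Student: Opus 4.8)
The plan is to leverage the pointwise gradient bound from \rprop{prop:Linfty} together with the energy upper bound from \rlemma{lem:up} to squeeze $f_p$ between $\frac{r}{\sqrt 2}$ and a slightly smaller linear function on $[0,b]$. First I would obtain the upper bound $f_p'\le \frac{1}{\sqrt 2}+\frac Cp$ on all of $\R_+$: indeed $|\nabla u_p|\ge |f_p'|$, so \eqref{eq:43} gives $f_p'(r)\le 1+\frac Cp$, which is not yet good enough. To get the constant $\frac1{\sqrt2}$ one must use that $f_p/r$ is also comparable to $f_p'$ for small $r$; more precisely, since $h(r)\to 1$ as $r\to 0$ and $|\nabla u_p|^2=(f_p')^2(1+h^{-2})$, near $0$ we have $|\nabla u_p|\approx \sqrt2\,f_p'$, so \eqref{eq:43} forces $f_p'(0)\le \frac{1}{\sqrt2}(1+\frac Cp)$. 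Combined with Corollary~\ref{cor:f''<0} ($f_p'$ decreasing), this yields $f_p'(r)\le \frac{1}{\sqrt2}(1+\frac Cp)$ for all $r\ge 0$, hence after integrating, $f_p(r)\le \frac{r}{\sqrt2}(1+\frac Cp)$ on $[0,b]$, which already gives the upper half of (\ref{eq:48}a) up to an $O(1/p)$ error (note $\frac1p\le \frac{\ln p}{p}$).

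Next I would prove the matching lower bound $f_p(r)\ge \frac{r}{\sqrt2}-C\frac{\ln p}{p}$ on $[0,b]$. The idea is that if $f_p$ dipped significantly below $\frac{r}{\sqrt2}$ on a macroscopic set, then on the region $[b,\sqrt2]$ (or just beyond $b$) the potential term $\frac12(1-f_p^2)^2$ would be bounded away from its $f_\infty$-value, and since $f_p$ is monotone increasing with $f_p'$ decreasing, a deficit at $r=b$ propagates: $f_p(r)\le f_p(b)+\frac{1}{\sqrt2}(1+\frac Cp)(r-b)$ would keep $1-f_p^2$ bounded below by a positive constant on an interval of length $\sim$const, contributing an $O(1)$ amount to $I_p(f_p)$ in excess of $\frac16$, contradicting \rlemma{lem:up} once the excess exceeds $C\frac{\ln p}{p}$. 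Quantitatively, if $f_p(b)=\frac{b}{\sqrt2}-\delta$ with $\delta>0$, one estimates the extra energy as $\gtrsim \delta$ (after optimizing the interval), forcing $\delta\le C\frac{\ln p}{p}$. Using monotonicity of $f_p$ again (and the already-established upper bound), the bound $f_p(b)\ge \frac{b}{\sqrt2}-C\frac{\ln p}{p}$ upgrades to the same bound uniformly on $[0,b]$ — for $r\le b$, $f_p(r)\ge f_p(b)-\frac{1}{\sqrt2}(1+\frac Cp)(b-r)\ge \frac{r}{\sqrt2}-C'\frac{\ln p}{p}$. This establishes (\ref{eq:48}a).

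For (\ref{eq:48}b), having controlled $f_p$ in $L^\infty(0,b)$, I would use the ODE \eqref{eq:30} together with the decreasing monotonicity of $f_p'$ (Corollary~\ref{cor:f''<0}) and of $f_p/r$ (Step~2 of \rprop{prop:at-zero}). Since $f_p'$ is decreasing, $f_p'(r)\le \frac{1}{r}\int_0^r f_p'=\frac{f_p(r)}{r}$ and $f_p'(r)\ge f_p'(b)$ for $r\le b$; so it suffices to pin down $f_p'$ at, say, $r=b$ and at $r=0$. From (\ref{eq:48}a), $\frac{f_p(b)}{b}=\frac{1}{\sqrt2}+O(\frac{\ln p}{p})$ gives $f_p'(b)\le \frac{1}{\sqrt2}+O(\frac{\ln p}{p})$; and the upper bound on $f_p'(0)$ from the first paragraph gives $f_p'(r)\le f_p'(0)\le \frac{1}{\sqrt2}+O(\frac 1p)$. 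For the lower bound on $f_p'$, if $f_p'(b)$ were substantially below $\frac1{\sqrt2}$ then by monotonicity $f_p'\le f_p'(b)$ on $[b,\infty)$ would make $f_p$ approach $1$ too slowly — combined with (\ref{eq:48}a) at $r=b$ this contradicts $f_p\to 1$, or more cleanly, integrating the pointwise lower bound $f_p(b)\ge \frac{b}{\sqrt2}-C\frac{\ln p}{p}$ forces the average of $f_p'$ on $(0,b)$ to be at least $\frac1{\sqrt2}-C\frac{\ln p}{b\,p}$, and since $f_p'$ is decreasing this average is a lower bound for $f_p'(b)$. Together these give (\ref{eq:48}b).

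I expect the main obstacle to be the lower bound in (\ref{eq:48}a): turning a pointwise deficit $f_p(b)<\frac{b}{\sqrt2}-\delta$ into a quantitatively correct $\gtrsim\delta$ excess in $I_p(f_p)$ requires care in choosing the comparison interval beyond $b$ and in controlling how the potential term $\frac12(1-f_p^2)^2$ behaves there, using that $f_p'$ cannot exceed $\frac{1}{\sqrt2}(1+\frac Cp)$ so the deficit cannot be recovered too quickly. One must also be slightly careful that $b$ is strictly less than $\sqrt2$ so there is genuine room between $b$ and $\sqrt2$; the constant $C$ will blow up as $b\uparrow\sqrt2$, which is consistent with the statement allowing $C=C(b)$.
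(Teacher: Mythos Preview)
Your upper-bound argument (first paragraph) is correct and matches the paper exactly: from $|\nabla u_p(0)|^2=2f_p'(0)^2$ and \eqref{eq:43} one gets $f_p'(0)\le\frac{1}{\sqrt2}+\frac Cp$, and then Corollary~\ref{cor:f''<0} propagates this to all $r$.

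There is, however, a genuine error in your derivation of the \emph{lower} bound for $f_p'$. You write that since $f_p'$ is decreasing, the average $\frac{1}{b}\int_0^b f_p'=\frac{f_p(b)}{b}$ ``is a lower bound for $f_p'(b)$''. This inequality goes the wrong way: for a decreasing function the average over $[0,b]$ is an \emph{upper} bound for the value at the right endpoint, so you only get $f_p'(b)\le\frac{f_p(b)}{b}$, which you already knew from $h<1$. Your alternative sketch (``$f_p$ would approach $1$ too slowly'') is not wrong in spirit but is not an argument either; to make it work you would need to apply part~(a) at some $b'\in(b,\sqrt2)$ and then bound $f_p'(b)\ge\frac{f_p(b')-f_p(b)}{b'-b}$, again using that $f_p'$ is decreasing on $[b,b']$.

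The paper handles both lower bounds simultaneously by a cleaner device you have essentially at hand but did not exploit: the function $w(r)=\frac{r}{\sqrt2}-f_p(r)$ is \emph{convex} (since $f_p''\le0$ by Corollary~\ref{cor:f''<0}). The energy bound \eqref{eq:14} is turned into the integral inequality $\int_b^{\tilde b} w(r)\,dr\le C_b\frac{\ln p}{p}$ for $\tilde b=\frac{b+\sqrt2}{2}$ via the quadratic expansion
\[
\tfrac12\int_0^{\sqrt2}(1-f_p^2)^2\,r\,dr\;\ge\;\tfrac16+\int_0^{\sqrt2}\bigl(1-\tfrac12r^2\bigr)\bigl(\tfrac12r^2-f_p^2\bigr)\,r\,dr\,,
\]
and then convexity of $w$ gives
\[
\int_b^{\tilde b}w\;\ge\;w(b)(\tilde b-b)+\tfrac12 w'(b)(\tilde b-b)^2\,.
\]
Since $w(b)\ge -C/p$ this yields $w'(b)\le C_b\frac{\ln p}{p}$ directly, i.e.\ $f_p'(b)\ge\frac{1}{\sqrt2}-C_b\frac{\ln p}{p}$; monotonicity of $w'$ then gives (\ref{eq:48}b) on all of $[0,b]$, and integrating gives (\ref{eq:48}a). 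This bypasses the propagation step you attempted and avoids the reversed inequality.
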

\begin{proof}
First we note that by \eqref{eq:43}
\begin{equation*}
  |\nabla u_p(0)|^2=2f'_p(0)^2\leq 1+\frac{C}{p}~\Longrightarrow~f'_p(0)\leq \frac{1}{\sqrt{2}}+\frac{C}{p}\,.
\end{equation*} 
Since $f_p'$ is decreasing, we conclude that
\begin{equation}
  \label{eq:28}
f'_p(r)\leq \frac{1}{\sqrt{2}}+\frac{C}{p}\,.
\end{equation}
Integrating \eqref{eq:28}, using  $f_p(0)=0$, yields the existence of  $C>0$ such that for
every $p>2$ we have, for all $r>0$,
\begin{equation}
\label{eq:99}
  f_p (r)\leq \frac{1}{\sqrt{2}}\Big(1+\frac{C}{p}\Big)r \,.
\end{equation}
 Put
 \begin{equation*}
   w(r)=\frac{r}{\sqrt{2}}-f_p(r).
 \end{equation*}
By \eqref{eq:28}--\eqref{eq:99} we have
\begin{equation*}
  w'(r)\geq -\frac{C}{p}\geq -C\frac{\ln p}{p}\quad\text{and}\quad  w(r)\geq
  -\frac{C}{p}\geq  -C\frac{\ln
  p}{p}\,,\quad\forall r>0\,.
\end{equation*}
In order to conclude, we need to prove that for each $b\in(0,\sqrt{2})$ there exists $C_b$ such that
 \begin{equation}
   \label{eq:31}
w'(r)\leq C_b\frac{\ln p}{p}\quad\text{and}\quad  w(r)\leq C_b\frac{\ln
  p}{p}\,,\quad \forall r\in[0,b]\,.
 \end{equation}
For such $b$ we set $\tilde b=\frac{b+\sqrt{2}}{2}$ and
claim that
 \begin{equation}
   \label{eq:36}
\int_b^{\tilde b} w(r)\,dr \leq  C_b\frac{\ln p}{p}\,.
 \end{equation}
To prove \eqref{eq:36} we first note that
\begin{multline*}
  \frac{1}{2}\int_0^\infty \big(1-f_p^2\big)^2 r\,dr\geq  \frac{1}{2}\int_0^{\sqrt{2}}
  \big(1-f_p^2\big)^2  r\,dr= \frac{1}{2} \int_0^{\sqrt{2}}
  \bigg[1-\frac{1}{2}r^2\bigg]^2rdr + \\
\int_0^{\sqrt{2}}   \bigg[1- \frac{1}{2}r^2\bigg]
\bigg[\frac{1}{2}r^2-f_p^2\bigg]r\,dr+ \frac{1}{2} \int_0^{\sqrt{2}} 
\bigg[\frac{1}{2}r^2-f_p^2\bigg]^2r\,dr\,.
\end{multline*}
Since 
\begin{equation*}
  \frac{1}{2} \int_0^{\sqrt{2}} \bigg[1-\frac{1}{2}r^2\bigg]^2rdr=\frac{1}{6}\,,
\end{equation*}
 we deduce, using \eqref{eq:14}, that 
 \begin{equation*}
   \int_0^{\sqrt{2}}   \bigg[1- \frac{1}{2}r^2\bigg]
\bigg[\frac{1}{2}r^2-f_p^2\bigg]r\,dr\leq  C\frac{\ln p}{p}\,.
 \end{equation*}
Therefore 
\begin{equation*}
 C_b\frac{\ln p}{p}\geq  (1-\frac{{\tilde b}^2}{2})\int_b^{\tilde b} w(r)(\frac{r}{\sqrt{2}}+f_p)\,rdr\,,
\end{equation*}
 and \eqref{eq:36} follows.
 Finally, using the convexity of $w$ in conjunction with \eqref{eq:36}
 gives
 \begin{equation*}
   w(b)(\tilde b-b)+\frac{w'(b)}{2}(\tilde b-b)^2=\int_b^{\tilde b} (w(b)+(r-b)w'(b))\,dr \leq\int_b^{\tilde b} w(r)\,dr \leq C_b\frac{\ln p}{p}\,,
 \end{equation*}
 implying, in particular, that
 \begin{equation}
   \label{eq:44}
 w'(b)\leq C_b\frac{\ln p}{p}\,.
 \end{equation}
 Since $w'$ is increasing we deduce the first inequality in
 \eqref{eq:31}. The second one follows by integration of the first one.
\end{proof}

We now improve the estimates \eqref{eq:48}. We start by deriving a
Pohozaev-type identity.
\begin{proposition}
We have
\begin{equation}
\label{eq:57}
  \int_0^\infty |\nabla u_p|^p r\,dr = \frac{2}{p}m_p \,,
\end{equation}
where $m_p$ is defined  in \eqref{eq:94}.
\end{proposition}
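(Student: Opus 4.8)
The plan is to derive \eqref{eq:57} by a scaling (Pohozaev-type) argument based on the minimality of $f_p$ in $X_p$. For $\lambda>0$ set $f_\lambda(r):=f_p(\lambda r)$, equivalently $u_\lambda(x):=u_p(\lambda x)$; since $f_\lambda(0)=0$ and dilation preserves $W^{1,p}_{\text{loc}}(0,\infty)$, we have $f_\lambda\in X_p$ for every $\lambda>0$. The heart of the matter is the homogeneity of the two terms of $I_p$ under this dilation. Writing
\[
A:=\int_0^\infty|\nabla u_p|^p\,r\,dr\,,\qquad B:=\int_0^\infty\tfrac12(1-f_p^2)^2\,r\,dr\,,
\]
both finite since $m_p=A+B<\infty$, the identities $|\nabla u_\lambda(x)|=\lambda|\nabla u_p(\lambda x)|$ and $|u_\lambda(x)|=|u_p(\lambda x)|$ together with the change of variables $y=\lambda x$ give
\[
I_p(f_\lambda)=\lambda^{p-2}A+\lambda^{-2}B\,.
\]

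First I would verify this scaling identity in detail: the gradient term contributes a factor $\lambda^p$ from $|\nabla u_\lambda|^p$ and $\lambda^{-2}$ from the area element, hence $\lambda^{p-2}$, while the potential term is untouched by the dilation of the phase and contributes only $\lambda^{-2}$. Then, since $f_p$ minimizes $I_p$ over $X_p$ and $f_1=f_p$, the smooth function $\varphi(\lambda):=I_p(f_\lambda)=\lambda^{p-2}A+\lambda^{-2}B$ has a global minimum at $\lambda=1$, so $\varphi'(1)=(p-2)A-2B=0$. Consequently $B=\tfrac{p-2}{2}A$, whence $m_p=A+B=\tfrac p2 A$, i.e. $A=\tfrac2p m_p$, which is precisely \eqref{eq:57}.

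The argument is essentially routine, and the only slightly delicate point — that $\lambda\mapsto I_p(f_\lambda)$ is differentiable and may be differentiated at $\lambda=1$ — is settled at once by the explicit expression $\lambda^{p-2}A+\lambda^{-2}B$ with $A,B$ finite. I would mention, but not pursue, the alternative route of multiplying the Euler--Lagrange equation \eqref{eq:30} by $r^2f_p'$ and integrating over $(0,\infty)$; it yields the same identity but requires one to check that the boundary contributions at $r=0$ and $r=\infty$ vanish, invoking \rprop{prop:at-zero} and \rlemma{le5}. I would present the scaling proof, since it sidesteps those boundary terms altogether.
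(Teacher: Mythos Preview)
Your proof is correct and is essentially the same scaling (Pohozaev-type) argument as the paper's: both dilate $f_p$ by a parameter, compute $I_p(f_\lambda)=\lambda^{p-2}A+\lambda^{-2}B$, and differentiate at $\lambda=1$ using minimality. The only cosmetic difference is that the paper writes the potential term as $m_p-J$ rather than introducing a separate letter $B$.
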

\begin{proof}
  Let  $f_p^{(\alpha)}(r) = f_p(\alpha r)$ and $J=  \int_0^\infty |\nabla u_p|^p r\,dr $. Clearly,
  \begin{displaymath}
    M_\alpha = I_p(f_p^{(\alpha)})= \alpha^{p-2}J + \frac{1}{\alpha^2}(m_p-J) \,.
  \end{displaymath}
Hence,
\begin{displaymath}
  \frac{dM_\alpha}{d\alpha} = (p-2)\alpha^{p-3}J - \frac{2}{\alpha^3}(m_p-J) \,.
\end{displaymath}
Since  $M_\alpha$ must have a global minimum at $\alpha=1$, \eqref{eq:57} follows.
\end{proof}
\begin{corollary}
  We have
  \begin{equation}
    \label{eq:58}
\liminf_{p\to\infty}p|\nabla u_p(0)|^p \geq \frac{1}{3} \,.
  \end{equation}
\end{corollary}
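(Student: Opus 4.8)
The plan is to feed the Pohozaev-type identity \eqref{eq:57} two elementary pointwise bounds on $|\nabla u_p|$. Write $a_p:=|\nabla u_p(0)|=\|\nabla u_p\|_\infty$, the last equality being \eqref{eq:25}. First, since $|\nabla u_p|$ is non-increasing (as noted at the start of the proof of \rprop{prop:Linfty}), we have $|\nabla u_p(r)|\le a_p$ for every $r$. Second, recalling $|\nabla u_p|^2=(f_p')^2+\frac{f_p^2}{r^2}=(1+h^2)\frac{f_p^2}{r^2}$ and using $h<1$ (Step~1 of the proof of \rprop{prop:at-zero}) together with $f_p\le1$, we obtain $|\nabla u_p(r)|\le\sqrt2/r$ for all $r>0$. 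Hence $|\nabla u_p(r)|\le\min\{a_p,\sqrt2/r\}$, the two bounds crossing at $\rho:=\sqrt2/a_p$.

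Next I would insert this bound into \eqref{eq:57}, splitting the integral at $\rho$. A one-line computation gives
\begin{equation*}
  \frac{2}{p}m_p=\int_0^\infty|\nabla u_p|^pr\,dr\le\int_0^\rho a_p^p\,r\,dr+\int_\rho^\infty\Big(\frac{\sqrt2}{r}\Big)^p r\,dr=a_p^{p-2}+\frac{2a_p^{p-2}}{p-2}=\frac{p}{p-2}\,a_p^{p-2}\,.
\end{equation*}
Therefore $a_p^{p-2}\ge\frac{2(p-2)}{p^2}\,m_p$, and multiplying by $p\,a_p^2$,
\begin{equation*}
  p\,a_p^p\ \ge\ \frac{2(p-2)}{p}\,a_p^2\,m_p\,.
\end{equation*}

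It remains to take $\liminf_{p\to\infty}$. On one hand $a_p\le 1+C/p$ by \rprop{prop:Linfty}; on the other hand $a_p=\sqrt2\,f_p'(0)\ge\sqrt2\,f_p(b)/b$ for any fixed $b\in(0,\sqrt2)$ (by $f_p(0)=0$ and the concavity of $f_p$, \rcor{cor:f''<0}), which together with \eqref{eq:48} forces $a_p\to1$, so $a_p^2\to1$ and $\frac{2(p-2)}{p}a_p^2\to2$. For the lower bound on $m_p$ I would use \eqref{eq:99}: since $f_p(r)\le\frac{1+C/p}{\sqrt2}r$,
\begin{equation*}
  m_p\ \ge\ \frac12\int_0^\infty(1-f_p^2)^2r\,dr\ \ge\ \frac12\int_0^{\sqrt2/(1+C/p)}\Big(1-\frac{(1+C/p)^2}{2}r^2\Big)^2 r\,dr=\frac{1}{6(1+C/p)^2}\,,
\end{equation*}
so $\liminf_{p\to\infty}m_p\ge\frac16$ (in fact $m_p\to\frac16$, the matching upper bound being \rlemma{lem:up}). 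Combining, $\liminf_{p\to\infty}p\,a_p^p\ge 2\cdot\frac16=\frac13$, which is \eqref{eq:58}.

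All of the above is routine bookkeeping except for the one genuinely load-bearing point: the bound $|\nabla u_p(r)|\le\min\{a_p,\sqrt2/r\}$, which is what makes the $r$-integral in the Pohozaev identity \eqref{eq:57} comparable to $a_p^{p-2}$ up to the harmless factor $p/(p-2)$. A minor secondary point requiring its own short argument is the energy lower bound $\liminf m_p\ge\frac16$, since only the upper bound was recorded in \rlemma{lem:up}.
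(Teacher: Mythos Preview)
Your proof is correct and follows essentially the same route as the paper: both combine the Pohozaev identity \eqref{eq:57} with the two pointwise bounds $|\nabla u_p|\le a_p$ and $|\nabla u_p|\le\sqrt{2}/r$, together with the energy lower bound $\liminf_{p\to\infty} m_p\ge\tfrac16$. The only cosmetic difference is that you split the integral at the $p$-dependent crossover $\rho=\sqrt{2}/a_p$ (which then forces you to verify $a_p\to1$ separately), whereas the paper splits at a fixed $l>\sqrt{2}$ and takes $l\downarrow\sqrt{2}$ at the end, sidestepping that extra step.
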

\begin{proof}
Since $f_p^\prime<f_p/r<1/r$ we have 
\begin{displaymath}
  |\nabla u_p| \leq \frac{\sqrt{2}}{r} \,.
\end{displaymath}
Thus, for every $l>0$,
\begin{equation*}
  \int_{l}^\infty |\nabla u_p|^p r dr \leq \frac{{2}^{p/2}}{(p-2)l^{p-2}}\,,
\end{equation*}
from which we get
\begin{equation}
  \label{eq:59}
\lim_{p\to\infty}\int_l^\infty |\nabla u_p|^p\,rdr=0\,,\quad \forall l>\sqrt{2}\,.
\end{equation}

By \eqref{eq:48} we have 
  \begin{displaymath}
    \liminf_{p\to\infty}m_p \geq  \sup_{b\in (0,\sqrt{2})}\liminf_{p\to\infty}\frac{1}{2}\int_0^{b}
    (1-f_p^2)^2 rdr = \frac{1}{6} \,.
  \end{displaymath}
Thus, by \eqref{eq:59} and \eqref{eq:57} we have for all $l> \sqrt{2}$,
\begin{displaymath}
  \liminf_{p\to\infty} p \int_0^l |\nabla u_p|^p rdr \geq  \frac{1}{3} \,.
\end{displaymath}
As $|\nabla u_p(r)|\leq |\nabla u_p(0)|$ we deduce that
\begin{displaymath}
   \liminf_{p\to\infty} p|\nabla u_p(0)|^p\frac{l^2}{2} \geq \frac{1}{3} \quad \forall l>\sqrt{2}\,,  
\end{displaymath}
from which \eqref{eq:58} readily follows.
\end{proof}

\begin{lemma}
  Let $g=|\nabla u_p|^p$, and
  \begin{displaymath}
    g_0 = \frac{1}{p}\Big(1-\frac{1}{2}r^2\Big)^2 \,.
  \end{displaymath}
Then,
\begin{equation}
  \label{eq:52}
\lim_{p\to\infty}p\|g-g_0\|_{L^\infty(0,a)} =0\,, \quad \forall a<\sqrt{2} \,.
\end{equation}
\end{lemma}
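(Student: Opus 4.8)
The plan is to compare $g = |\nabla u_p|^p$ with $g_0 = \frac{1}{p}(1-\frac12 r^2)^2$ by exploiting the Euler--Lagrange equation together with the sharp estimates already at our disposal. The key observation is that $g_0$ is, up to the factor $1/p$, exactly the potential term $(1-f_p^2)^2$ evaluated at the limiting profile $f_\infty(r) = r/\sqrt{2}$. By \eqref{eq:48}(a) we already know $\|f_p^2 - \frac12 r^2\|_{L^\infty(0,a)} \le C_a \frac{\ln p}{p}$, and hence
\begin{equation*}
  \Big\| (1-f_p^2)^2 - \Big(1-\tfrac12 r^2\Big)^2 \Big\|_{L^\infty(0,a)} \le C_a \frac{\ln p}{p}\,,\quad\forall a<\sqrt{2}\,.
\end{equation*}
Therefore it suffices to prove that $p\,g = p|\nabla u_p|^p$ is uniformly close, on $(0,a)$, to $(1-f_p^2)^2$, i.e. that $p|\nabla u_p|^p - (1-f_p^2)^2 \to 0$ uniformly on $(0,a)$ with an error that is $o(1)$ (indeed one expects $O(\ln p / p)$, but $o(1)$ is all that is claimed).

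First I would rewrite the Euler--Lagrange equation \eqref{eq:30} in divergence form for the flux $\Phi_p := r\,|\nabla u_p|^{p-2} f_p'$, namely
\begin{equation*}
  \Phi_p'(r) = |\nabla u_p|^{p-2}\frac{f_p}{r} - \frac{2}{p} r f_p (1-f_p^2)\,,
\end{equation*}
and integrate from $r$ to $\infty$; alternatively, integrate from $0$. Using $f_p'(0) > 0$ finite (Proposition~\ref{prop:at-zero}), $\Phi_p(0)=0$, and the asymptotic decay from Lemma~\ref{le5}, one controls $\Phi_p$ and hence $|\nabla u_p|^{p-2} f_p'$ pointwise. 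The cleaner route, though, is to observe that on $(0,a)$, $a<\sqrt2$, Proposition~\ref{prop:Linfty} gives $|\nabla u_p| \le 1 + C/p$, while \eqref{eq:21}-type lower bounds (or \eqref{eq:48}(b), which gives $f_p' \to 1/\sqrt2$ and hence $|\nabla u_p| \to 1$ on compacts away from $\sqrt2$) give a matching lower bound $|\nabla u_p| \ge 1 - C_a \ln p / p$ on $(0,a)$. Since $|\nabla u_p|$ is between $1 - C_a\ln p/p$ and $1 + C/p$, we get $|\nabla u_p|^{p} = \exp\{p\ln|\nabla u_p|\}$, and $p\ln|\nabla u_p|$ is $O(\ln p)$ — which is \emph{not} small. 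So the naive estimate is not enough: the exponent $p$ amplifies the $O(\ln p / p)$ error in $|\nabla u_p|$ into an $O(\ln p)$ multiplicative error in $g$. This is the main obstacle.

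To overcome it, I would not estimate $|\nabla u_p|$ directly but rather derive a first-order ODE for $g = |\nabla u_p|^p$ itself. Computing $g' = p\,g\,\frac{(|\nabla u_p|^2)'}{2|\nabla u_p|^2}$ and using the identity $(|\nabla u_p|^2)' = \big(\frac{f_p^2}{r^2}(1+h^2)\big)'$ together with the formula \eqref{eq:32} for $h'$, one expresses $g'$ in terms of $g$, $h$, $f_p$ and $r$. In this ODE the "source'' after regrouping turns out to be $\frac{2}{p}(1-f_p^2)$ times explicit bounded factors — precisely because \eqref{eq:30} says $\Phi_p' $ has $\frac2p f_p(1-f_p^2)$ in it. Integrating this ODE from $0$, where $g(0) = |\nabla u_p(0)|^p$ is controlled from above by $\eqref{eq:43}$ (so $p\,g(0) \le (1+C/p)^p \le e^C$, bounded) and from below by the Corollary, $\liminf_p p\,g(0) \ge 1/3$, one gets that $p\,g(r)$ stays bounded and, more importantly, that its derivative matches that of $p\,g_0(r) = (1-\frac12 r^2)^2$ up to an error that integrates to $o(1)$ on $(0,a)$. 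The endpoint value $p\,g(0)$ must then be pinned to $\lim p\,g_0(0) = 1$: this follows by combining the upper bound $p\,g(0) \le e^{C/p\cdot p} \to$ (needs the sharpened $C/p^2$, or) more robustly from the Pohozaev identity \eqref{eq:57} and \eqref{eq:59}, which force $\int_0^{\sqrt2} p\,g \to \frac13 = \int_0^{\sqrt2}(1-\frac12 r^2)^2 r\,dr$; since $p\,g$ is monotone decreasing (as $|\nabla u_p|$ is) and its derivative is controlled, this integral identity plus monotonicity upgrades to uniform convergence $p\,g \to p\,g_0$ on compacts of $[0,\sqrt2)$, which is \eqref{eq:52}. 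The delicate point throughout is keeping the exponential amplification under control by always working with $g$ and its ODE rather than with $|\nabla u_p|$, and using the global integral constraint \eqref{eq:57} to fix the otherwise-uncontrolled constant of integration.
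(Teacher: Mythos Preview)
Your proposal correctly identifies the central obstacle --- that the $p$-th power amplifies the $O(\ln p/p)$ error in $|\nabla u_p|$ into something uncontrolled --- and correctly concludes that one must work with an ODE for $g=|\nabla u_p|^p$ rather than estimate $|\nabla u_p|$ directly. This is exactly the paper's strategy. However, the execution you sketch has real gaps.

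First, your proposed derivation of the ODE via $g'=p\,g\,(|\nabla u_p|^2)'/(2|\nabla u_p|^2)$ and \eqref{eq:32} would lead to a nonlinear equation whose analysis is not obviously tractable. The paper instead multiplies \eqref{eq:30} by $rf_p$ and integrates over $[0,r]$, obtaining the \emph{linear} first-order ODE
\[
  G'-\gamma_p G+\tfrac{2}{p}\gamma_p h=0,\qquad G(r)=\int_0^r g(t)\,t\,dt,\quad \gamma_p=\tfrac{2}{r(1-\alpha_p)},
\]
with $\alpha_p=1-2\frac{f_p}{r}f_p'/|\nabla u_p|^2$ and $h(r)=\int_0^r f_p^2(1-f_p^2)\,t\,dt$. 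This linear structure is what makes the explicit solution \eqref{eq:55} and the two-sided bound \eqref{eq:56} possible; your route does not obviously produce it.

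Second, your proposed mechanism for fixing the free constant --- Pohozaev \eqref{eq:57} plus ``monotonicity upgrades integral convergence to uniform convergence'' --- is circular as stated: you need the derivative control (i.e.\ the ODE analysis) \emph{before} you can pass from $\int_0^{\sqrt2} pg\to\frac13$ to pointwise information. The paper instead argues by contradiction: if the constant $\tilde C_0$ in \eqref{eq:56} tended to a limit $>1/2$, a bootstrapping extends \eqref{eq:56} past $\sqrt2$, forcing $pg(\sqrt2+\delta)/\ln p$ to stay bounded below, which contradicts the trivial bound $g(r)\le(\sqrt2/r)^p$.

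Third, you do not address the neighborhood of $r=0$, where the lower bound in \eqref{eq:56} degenerates because of the factor $r^{2((1-\epsilon_p)^{-1}-1)}$. The paper handles this with a separate bootstrapping using \eqref{eq:75} and the lower bound \eqref{eq:58} on $p\,g(0)$, showing $g(r)\ge g(0)(1-\epsilon_p^{1/2})$ for $r^2<\epsilon_p^{1/2}/(8p)$ and matching this to \eqref{eq:56} in the overlap region. Without this step \eqref{eq:52} is not established uniformly down to $r=0$.
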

\begin{proof}
  Multiplying \eqref{eq:30} by $rf_p$ and integrating over $[0,r]$, we obtain
  \begin{subequations}
    \label{eq:53}
      \begin{equation}
\frac{p}{4}r^2g(1-\alpha_p) -\frac{p}{2}\int_0^rg(t)tdt + h(r) = 0 \,,
  \end{equation}
in which
\begin{gather}
  \alpha_p = 1 - \frac{2\frac{f_p}{r}f_p^\prime}{|\nabla u_p|^2}>0 \,,
\intertext{and}
h(r) = \int_0^rf_p^2(1-f_p^2)tdt\,.
\end{gather}
 \end{subequations}
We may write
\begin{equation*}
  h(r)=h_0(r)(1+\beta_p)~\text{ with }~h_0(r) = \int_0^r \frac{t^2}{2}\Big(1-\frac{t^2}{2}\Big)t\,dt=\frac{1}{8}\Big(r^4-\frac{1}{3}r^6\Big)\,. 
\end{equation*}
Set
\begin{equation}
\label{eq:60}
\epsilon_p(a) = \max \big( \|\alpha_p\|_{L^\infty(0,a)} , \|\beta_p\|_{L^\infty(0,a)} \big)\,.
\end{equation}
By \eqref{eq:48} there exists $C>0$ such that
\begin{equation}
\label{eq:54}
  \epsilon_p(a) \leq C \frac{\ln p}{p}\,,
\end{equation}
for all fixed $a<\sqrt{2}$. 

Set
\begin{displaymath}
  G(r) = \int_0^r g(t)t\,dt\,,
\end{displaymath}
to obtain from \eqref{eq:53} that
\begin{equation}
\label{eq:2}
  G^\prime- \gamma_pG + \frac{2}{p}\gamma_ph = 0\,,
\end{equation}
where
\begin{displaymath}
  \gamma_p = \frac{2}{r(1-\alpha_p) }\,.
\end{displaymath}
Solving \eqref{eq:2} and then evaluating $G'$ once again from
\eqref{eq:2} yields  the general solution of \eqref{eq:53}: 
  \begin{equation}
\label{eq:55}
    g(r)= -\frac{2}{p}\frac{\gamma_p}{r} \left[ h + \int_0^r \exp \bigg\{ \int_t^r \gamma_p(s)ds
      \bigg\}\gamma_p(t) h(t)dt + C_0 \exp \bigg\{ -\int_r^a \gamma_p(t)dt \bigg\} \right]\,,
  \end{equation}
where $C_0$ is arbitrary.

 First we compute
 \begin{displaymath}
   \frac{\gamma_p}{r} \exp \bigg\{ -\int_r^a
   \gamma_p(t)dt \bigg\} \leq  \frac{\gamma_p}{r} \exp \bigg\{ -2\int_r^a
  \frac{dt}{t} \bigg\}=\frac{\gamma_pr}{a^2}\,. 
 \end{displaymath}
On the other hand, a similar computation gives 
\begin{displaymath}
   \frac{\gamma_p}{r} \exp \bigg\{ -\int_r^a \gamma_p(t)dt \bigg\}\geq  \frac{\gamma_p}{r} \bigg(\frac{r}{a}\bigg)^{\frac{2}{1-\epsilon_p}}\,.
\end{displaymath}
Therefore,
\begin{displaymath}
   \frac{2}{a^2}\bigg(\frac{r}{a}\bigg)^{2\big((1-\epsilon_p)^{-1}
     -1\big)}\leq \frac{\gamma_p}{r} \exp \bigg\{ -\int_r^a
   \gamma_p(t)dt \bigg\} 
\leq \frac{2}{a^2}\big(1+C\epsilon_p \big) \,.
\end{displaymath}
 Similarly,
\begin{multline*}
  \frac{\gamma_p}{r}\int_0^r \exp \bigg\{ \int_t^r \gamma_p(s)\,ds
      \bigg\} \gamma_p(t)h(t)\,dt \geq  \frac{\gamma_p}{r}\int_0^r
      \bigg(\frac{r}{t}\bigg)^2 \gamma_p(t)h(t)\,dt\\
 \geq 4\int_0^r \frac{h(t)}{t^3}\,dt\geq \Big(\frac{1}{4}r^2 -
      \frac{1}{24}r^4\Big)\big( 1-\epsilon_p\big) \,,
\end{multline*}
 and 
 \begin{displaymath}
    \frac{\gamma_p}{r}\int_0^r \exp \bigg\{ \int_t^r \gamma_p(s)\,ds
      \bigg\} \gamma_p(t)h(t)\,dt \leq \Big(\frac{1}{4}r^2 -
      \frac{1}{24}r^4\Big)\big( 1+C\epsilon_p\big)\,.
 \end{displaymath}

Combining the above with \eqref{eq:55} we obtain that
\begin{equation}
  \label{eq:56}
\frac{2}{p}\bigg[\tilde{C}_0r^{2\big((1-\epsilon_p)^{-1} -1\big)}
-\frac{1}{2}r^2 + \frac{1}{8}r^4-C\epsilon_p\bigg] 
 \leq g \leq
\frac{2}{p}\bigg[\tilde{C}_0 -\frac{1}{2}r^2 + \frac{1}{8}r^4+C\epsilon_p\bigg]  \,.
\end{equation}
Note that the above lower bound is unsatisfactory in some neighborhood of
$r=0$ where
\begin{displaymath}
  1-r^{2\big((1-\epsilon_p)^{-1} -1\big)}\sim \OO( \epsilon_p)\,,
\end{displaymath}
which is valid for $r\sim\OO(1)$ as $p\to\infty$. 

We defer the proof of convergence near $r=0$ to a later stage and
instead prove first the existence of
$\lim_{p\to\infty}\tilde{C}_0(p)$, and then obtain its value.  Clearly,
\begin{displaymath}
  \liminf_{p\to\infty}\tilde{C}_0(p)\geq \frac{1}{2},
\end{displaymath}
otherwise $g$ would become negative, for some sufficiently large $p$ and
a fixed $r_0<\sqrt{2}$ - a contradiction.  Suppose now to the
contrary, that a sequence $\{p_k\}_{k=1}^\infty$ exists such that
$\tilde{C}_0(p_k)=C_k\to b$, where $b\in(\frac{1}{2},\infty]$.  By (\ref{eq:43})
we have  
\begin{displaymath}
  \|g(\cdot,p_k)\|_{L^\infty(\R_+)} \leq C\,,
\end{displaymath}
where $C$ is independent of $k$. Hence, by (\ref{eq:56}) we have
\begin{equation}
\label{eq:50}
  C_k \leq Cp_k \,.
\end{equation}

Set
\begin{displaymath}
  g_{0,k} =  2\bigg[C_k -\frac{1}{2}r^2 +
  \frac{1}{8}r^4\bigg] \,.
\end{displaymath}
Note that by our supposition $\lim g_{0,k}(r)>0$ in $[0,\sqrt{2}+\delta]$ for
some $\delta>0$.
It follows from (\ref{eq:56}) and (\ref{eq:50}) that 
\begin{equation}
\label{eq:49}
 \frac{\ln (g_{0,k}-\epsilon_k)}{p_k} -2 \bigg|\frac{\ln (g_{0,k}-\epsilon_k)}{p_k}\bigg|^2 \leq  |\nabla u_p| -1 +
 \frac{\ln p_k}{p_k} \leq  \frac{\ln (g_{0,k}+\epsilon_k)}{p_k}+ 2 \bigg|\frac{\ln (g_{0,k}+\epsilon_k)}{p_k}\bigg|^2
\end{equation}
where $\epsilon_k(a)=\epsilon(p_k)(a)$. 

We argue from here by bootstrapping. Let $a\in(0,\sqrt{2}+\delta]$ be such
that 
\begin{equation}
\label{eq:51}
  \limsup \epsilon_k(a)\leq\limsup \frac{g_{0,k}(a)}{2}\,.
\end{equation}
For sufficiently large $k$ we have, in view of (\ref{eq:49}) and
(\ref{eq:50}) and the fact that $\epsilon_k(\sqrt{2}+\delta)$ is bounded , that
\begin{displaymath}
  2\frac{f_k}{r}f_k^\prime \leq |\nabla u_p|^2  \leq 1+ \frac{C}{p_k} \quad
  \forall r\in[0,\sqrt{2}+\delta]\,,
\end{displaymath}
where $f_k=f_{p_k}$, from which we obtain that
\begin{equation}
\label{eq:11}
  \frac{f_k}{r}  \leq \frac{1}{\sqrt{2}}+ \frac{C}{p_k} \quad
  \forall r\in[0,\sqrt{2}+\delta]\ \,.
\end{equation}
Consequently, by (\ref{eq:49}) and (\ref{eq:51}), we have for sufficiently large $k$ that
\begin{equation}
\label{eq:12}
  f_k^\prime \geq \frac{1}{\sqrt{2}} - C\frac{\ln p_k}{p_k}  \quad \forall r\in[0,a]\,,
\end{equation}
where $C$ is independent of $a$.
Since $f_k^\prime\leq f_k/r$, we have by (\ref{eq:53}b), for sufficiently large $k$, that 
\begin{equation}
\label{eq:61}
   \alpha_{p_k}\leq  \frac{\ln p_k}{p_k}  \quad \forall r\in[0,a]\,,
\end{equation}
where $C$ is independent of $a$. Furthermore, by (\ref{eq:53}c,d),
(\ref{eq:11}), (\ref{eq:12}), and the fact that $f_k/r>f_k^\prime$ there
exists $C>0$ which is independent of both $k$ and $a$ such that
\begin{displaymath}
  \beta_{p_k}(r) \leq C\frac{\ln p_k}{p_k} 
\end{displaymath}
for all $r\leq a$.  Combining the above and (\ref{eq:61}) we obtain for
sufficiently large $k$
\begin{equation}
\label{eq:74}
  \limsup \epsilon_k(a)\leq\lim \frac{g_{0,k}(a)}{2} \Rightarrow \limsup \,\frac{p_k}{\ln
    p_k}\epsilon_k \leq C  \,,
\end{equation}
where $C$ is independent of $a$. From (\ref{eq:48}) we thus have
$$\limsup \frac{p_k \epsilon_k}{\ln p_k} \leq C$$ 
for all $a<\sqrt{2}$.

Let then $a_0$ be such that  
\begin{displaymath}
  \limsup \epsilon_k(a_0)=\lim \frac{g_{0,k}(a_0)}{2}\,.
\end{displaymath}
Since by (\ref{eq:74}) we have $\lim g_{0,k}(a_0)=0$, it follows that
$a_0>\sqrt{2}+\delta$. Hence
\begin{displaymath}
  \limsup \frac{p_k}{\ln p_k} \epsilon_k(\sqrt{2}+\delta) \leq C \,.
\end{displaymath}
Substituting into (\ref{eq:56}) we obtain that
\begin{displaymath}
  \lim \frac{p_k}{\ln p_k} g(\sqrt{2}+\delta) >0 \,.
\end{displaymath}
Let $l> \sqrt{2}$. Then, $f_p^\prime(l)<f_p(l)/l<1/l$, and hence
$g(l)\leq(\sqrt{2}/l)^p$. Consequently, $g(l)$ is exponentially small for
all $l>\sqrt{2}$ as $p\to\infty$, and in particular at $l=\sqrt{2}+\delta$ - a contradiction.
Hence, we obtain that $\lim_{p\to\infty}\tilde{C}_0(p)=1/2$ .

To complete the proof of (\ref{eq:52}) we need to extend (\ref{eq:56})
to every neighborhood of $r=0$.  Since obtaining an $\OO(\epsilon_p)$ accuracy in
this neighborhood is a difficult task, we allow for an error of larger
magnitude. Thus, requiring that
\begin{equation}
  \label{eq:27}
\frac{2}{p}\bigg[\tilde{C}_0r^{2\big((1-\epsilon_p)^{-1} -1\big)}
-\frac{1}{2}r^2 + \frac{1}{8}r^4-C\epsilon_p^{\frac12}\bigg] 
 \leq g \leq
\frac{2}{p}\bigg[\tilde{C}_0 -\frac{1}{2}r^2 + \frac{1}{8}r^4+C\epsilon_p^{\frac12}\bigg] 
\,.
\end{equation}
It is easy to show that the lower bound in \eqref{eq:27} provides an
estimate which is $O(\epsilon_p^{\frac12})$-accurate  whenever
$r^2>e^{-\epsilon_p^{-\frac{1}{2}}}$. 
  To complete the proof of \eqref{eq:52}, we just need to obtain an
  $O(\epsilon_p^{\frac12})$-accurate estimate for $g$, valid for $r^2\leq e^{-\epsilon_p^{-\frac{1}{2}}}$.

We argue again by bootstrapping.  We may regroup the terms in
\eqref{eq:91} to get
 \begin{equation}
\label{eq:75}
   -\frac{2}{p}|\nabla u_p|^{2-p}f_p(1-f_p^2)=f_p''\Big( 1+\frac{p-2}{|\nabla
     u_p|^2}|f_p'|^2\Big)+\Big(\frac{f_p'}{r}-\frac{f_p}{r^2}\Big)\Big(1+\frac{p-2}{|\nabla
     u_p|^2}\frac{f_pf_p'}{r}\Big)\,.
 \end{equation}
By  Step~1 in the proof of \rprop{prop:at-zero} we have
$\frac{f_p'}{r}-\frac{f_p}{r^2}>0$, and by \rcor{cor:f''<0},
$f_p''<0$. Hence, 
\begin{displaymath}
   f_p^{\prime\prime} \geq -\frac{2}{p}|\nabla u_p|^{2-p}f_p(1-f_p^2) \,.
\end{displaymath}
It follows that  as long as
\begin{displaymath}
  g \geq g(0) \big(1-\epsilon_p^{\frac{1}{2}}\big) \,, 
\end{displaymath}
we must have, by \eqref{eq:48}  that
 \begin{displaymath}
   f_p^{\prime\prime} \geq - \frac{2}{p} \frac{r}{\left[g(0)
       \big(1-\epsilon_p^{\frac{1}{2}}\big)\right]^{(p-2)/p}}  \,.
 \end{displaymath}
Integrating the above yields, in view of \eqref{eq:58},
\begin{displaymath}
  f_p^\prime(r) \geq f_p^\prime(0) - 4r^2\big(1+2\epsilon_p^{\frac{1}{2}}\big) \,.
\end{displaymath}
 Note that we can replace the constant $4$ by any other constant
 greater than $3$.
Consequently,
\begin{displaymath}
  |\nabla u_p| \geq \sqrt{2}|f_p^\prime| \geq \sqrt{2}\big|f_p^\prime(0) - 4r^2\big(1+2\epsilon_p^{\frac{1}{2}}
  \big)\big| \geq \sqrt{2}|f_p^\prime(0)|\bigg|1 - \frac{4\sqrt{2}r^2}{|\nabla u_p(0)|}^2\big(1+2\epsilon_p^{\frac{1}{2}}
  \big)\bigg| \,.
\end{displaymath}
Hence, 
\begin{displaymath}
  g(r) \geq g(0) \big(1-\epsilon_p^{\frac{1}{2}}\big) \Rightarrow g(r) \geq g(0)\bigg[1- \frac{4\sqrt{2}r^2}{|\nabla u_p(0)|}\big(1+2\epsilon_p^{\frac{1}{2}} \big) \bigg]^p
\end{displaymath}

Applying again \eqref{eq:58} we obtain that as long as
\begin{displaymath}
  r^2 < \frac{1}{8p} \epsilon_p^{\frac{1}{2}}\,,
\end{displaymath}
we have
\begin{equation}
\label{eq:3}
   g (r)\geq g(0) \big(1-\epsilon_p^{\frac{1}{2}}\big)\,.
\end{equation}
On the other hand, 
\begin{equation}
\label{eq:46}
  g(r) \leq g(0) \,.
\end{equation}
Since \eqref{eq:3}, \eqref{eq:46},  and  \eqref{eq:27} are
simultaneously satisfied  at
$r^2=2e^{-\epsilon_p^{-\frac{1}{2}}}$, we obtain
\begin{displaymath}
  \begin{cases}
  \frac{2}{p}\tilde{C}_0(1+C\epsilon_p^{\frac{1}{2}}) \geq g(0)(1-\epsilon_p^{\frac{1}{2}})\\
  \frac{2}{p}\tilde{C}_0(1-C\epsilon_p^{\frac{1}{2}}) \leq  g(0)
  \end{cases}\,.
 \end{displaymath}
Consequently,
\begin{displaymath}
  \bigg|\tilde{C}_0 - \frac{p}{2}g(0)\bigg| \leq C\epsilon_p^{\frac{1}{2}}\,.
\end{displaymath}
Furthermore, in view of \eqref{eq:3} and (\ref{eq:27}) we can safely
state that
\begin{equation}
\label{eq:6}
  \frac{2}{p}\bigg[\tilde{C}_0 -\frac{1}{2}r^2 + \frac{1}{8}r^4\tilde{C}_0-C\epsilon_p^{\frac12}\bigg]
\leq g \leq
\frac{2}{p}\bigg[\tilde{C}_0 -\frac{1}{2}r^2 + \frac{1}{8}r^4+C\epsilon_p^{\frac12}\bigg] 
\,,\;\text{ in }[0,a],\,\forall a\in(0,\sqrt{2}).
\end{equation}
Or
\begin{displaymath}
  \|p(g-g_0)\|\leq |\tilde{C}_0(p)-1|+ C\epsilon_p^{1/2} \,.
\end{displaymath}

\end{proof}

\begin{remark}
  From \eqref{eq:52} we can obtain the next two terms in the
  asymptotic expansion of $f_p$ in the large $p$ limit
  \begin{equation}
\label{eq:78}
    f_p = \frac{r}{\sqrt{2}}\left[ 1 - \frac{\ln p}{p} + \frac{\ln g_0(r)}{p} + o \bigg(
    \frac{1}{p} \bigg)\right] \,.
  \end{equation}
The above expansion is valid in $[0,a]$ for every $a<\sqrt{2}$.
\end{remark}
\begin{remark}
  Note that \eqref{eq:6} is valid for all $r>0$. It is only because of
  \eqref{eq:54} that we have to confine the validity of \eqref{eq:52}
  to closed intervals in $[0,\sqrt{2})$ whose edges do not depend on
  $p$. Note further that by \eqref{eq:60} we have that $\epsilon_p\leq2$ for all
  $r\leq\sqrt{2}$. 
\end{remark}
We can now extend the validity of the above estimate to
$[0,\sqrt{2}-\OO(\sqrt{\ln p/p})]$.
\begin{proposition}
\label{prop-extend}
  There exists $C>0$, which is independent of $p$, such that the estimate \eqref{eq:52} holds for
  every $r\in[0,\sqrt{2}-C(\ln p /p)^{1/2}]$. 
\end{proposition}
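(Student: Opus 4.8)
The plan is to propagate outward in $r$ the estimate of the preceding lemma, all the way to $\rho_p:=\sqrt2-C(\ln p/p)^{1/2}$. Recall that \eqref{eq:6} holds for every $r>0$; subtracting $\frac p2 g_0=\frac12(1-\frac12 r^2)^2$ it reads
\[
p\,|g(r)-g_0(r)|\le C\Big(\big|\tilde C_0(p)-\tfrac12\big|+\epsilon_p(r)^{1/2}\Big),\qquad
\epsilon_p(r)=\max\big(\|\alpha_p\|_{L^\infty(0,r)},\|\beta_p\|_{L^\infty(0,r)}\big).
\]
Since $\tilde C_0(p)\to\frac12$ has already been established, \eqref{eq:52} on $[0,\rho_p]$ is equivalent to $\epsilon_p(\rho_p)\to0$. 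With $h=rf_p'/f_p$ as in \eqref{eq:1000} one has $\alpha_p=\dfrac{(f_p/r)^2(1-h)^2}{|\nabla u_p|^2}$, while $\beta_p$ is defined by $\int_0^r f_p^2(1-f_p^2)t\,dt=h_0(r)(1+\beta_p(r))$ with $h_0(r)=\frac18(r^4-\frac13r^6)\ge r^4/24$ on $(0,\sqrt2)$, and $\big|\int_0^r f_p^2(1-f_p^2)t\,dt-h_0(r)\big|\le C\int_0^r|t/\sqrt2-f_p|\,t^2\,dt$. Hence it suffices to prove that $\|f_p-r/\sqrt2\|_{L^\infty(0,\rho_p)}\to0$ (which kills $\beta_p$ and puts $f_p/r$ near $1/\sqrt2$) and $\|1-h\|_{L^\infty(0,\rho_p)}\to0$ (which kills $\alpha_p$, as $|\nabla u_p|$ is bounded below there).

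Put $w=r/\sqrt2-f_p$. By \eqref{eq:99}, $w\ge-C/p$, and by \rcor{cor:f''<0}, $w''=-f_p''\ge0$; thus $w$ is convex with $w(0)=0$, so $w'$ and $w/r$ are non-decreasing and $w_+=\max(w,0)$ is non-decreasing past its first zero. Expanding $\frac12\int_0^\infty(1-f_p^2)^2 r\,dr$ exactly as in the proof of \eqref{eq:48} and using the upper bound \eqref{eq:14} gives $\int_0^{\sqrt2}(1-\frac12 r^2)(\frac12 r^2-f_p^2)r\,dr\le C\ln p/p$; writing $\frac12 r^2-f_p^2=w(f_p+r/\sqrt2)$, using $w\ge-C/p$ and $f_p+r/\sqrt2\ge r/\sqrt2$, one gets
\[
\int_0^{\sqrt2}\Big(1-\tfrac12 r^2\Big)w_+(r)\,r^2\,dr\le C\,\frac{\ln p}{p}\,.
\]
Since $w_+(r_0)\le w_+(r)$ for $r\in[r_0,\sqrt2]$ and $1-\frac12 r^2\ge(\sqrt2-r)/\sqrt2$ on $[0,\sqrt2]$, this forces $w_+(r)\le C(\ln p/p)\,r^{-2}(\sqrt2-r)^{-2}$. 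Consequently, if $C$ is chosen large enough in $\rho_p$, then $\|w_+\|_{L^\infty(0,\rho_p)}$ is an arbitrarily small constant; with $w\ge-C/p$ and $w/r\le w(\rho_p)/\rho_p$ this makes $\|f_p-r/\sqrt2\|_{L^\infty(0,\rho_p)}$ and $\|f_p/r-1/\sqrt2\|_{L^\infty(0,\rho_p)}$ small, hence $\beta_p(\rho_p)\to0$.

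For $1-h$ we use the equation. Since $h'\le0$ by \rlemma{lem:monotonicity2}, and $0<h<1$ on $(0,\infty)$ (by Step~1 of the proof of \rprop{prop:at-zero} together with $f_p'>0$, see \rprop{le2a}), \eqref{eq:32} gives $\frac{(1-h)(1+(p-1)h)}{r}\le\frac2p|\nabla u_p|^{2-p}r(1-f_p^2)$, and as long as $h$ stays close to $1$ this reads $1-h\le \frac{C}{p^2}|\nabla u_p|^{2-p}r^2(1-f_p^2)$; so $1-h$ is small once $|\nabla u_p|^{2-p}$ is only polynomially large in $p$. But this is exactly what \eqref{eq:52} supplies, since it gives $|\nabla u_p|^{p-2}\asymp p^{-1}(1-\frac12 r^2)^2$, whence $|\nabla u_p|^{2-p}\asymp p(1-\frac12 r^2)^{-2}$ and $1-h\le C r^2\big(p(1-\frac12 r^2)\big)^{-1}=o(1)$ — valid precisely while $1-\frac12 r^2\gg(\ln p/p)^{1/2}$, i.e.\ $r\le\sqrt2-C(\ln p/p)^{1/2}$. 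One therefore runs a continuation argument: \eqref{eq:52} holds on a fixed interval $[0,a_0]$ by the preceding lemma; on the maximal subinterval of $[0,\rho_p]$ on which it holds (with explicit bounds) one controls $1-h$, hence $f_p/r-f_p'=\frac{f_p}{r}(1-h)$, hence $|\nabla u_p|$, and \eqref{eq:6} re-establishes \eqref{eq:52} a little further out, the process continuing up to $\rho_p$. The delicate point is precisely this step: keeping the ``bad'' factors ($|\nabla u_p|^{2-p}$, $(1-\frac12 r^2)^{-1}$, and the constant in the bound for $w_+$) polynomial in $p$ throughout $[0,\rho_p]$, so that they are absorbed by the gains $1/p$ and $(\ln p/p)^{1/2}$; it is exactly at the scale $1-\frac12 r^2\sim(\ln p/p)^{1/2}$ that this breaks down, which accounts for the form of the enlarged interval.
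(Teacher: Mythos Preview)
Your strategy is different from the paper's, and the crux of your argument --- the continuation/bootstrap step --- is not closed.

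First, the energy--convexity bound you derive for $w_+$, namely
\[
w_+(r)\le C\,\frac{\ln p}{p}\,r^{-2}(\sqrt2-r)^{-2},
\]
evaluated at $\rho_p=\sqrt2-C(\ln p/p)^{1/2}$ gives only $w_+(\rho_p)\le C'/C^2$, a \emph{fixed small constant} (depending on how large you take $C$), not a quantity tending to $0$ as $p\to\infty$. Hence your conclusion ``$\beta_p(\rho_p)\to0$'' is not justified by that argument; you get at best $\limsup_p\beta_p(\rho_p)$ small. Since the target is $p\|g-g_0\|_{L^\infty(0,\rho_p)}\to0$, which via \eqref{eq:6} requires $\epsilon_p(\rho_p)\to0$, this is a genuine gap.

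Second, the continuation argument you then invoke is the real content, but you only assert it. The danger is circularity: to bound $1-h$ via \eqref{eq:32} you need $|\nabla u_p|^{2-p}$ to be only polynomially large; that requires a \emph{lower} bound on $|\nabla u_p|$, which (since $|\nabla u_p|^2=(f_p/r)^2(1+h^2)$) needs both $f_p/r$ near $1/\sqrt2$ and $h$ near $1$ --- exactly what you are trying to establish. For a continuity argument to close you must exhibit a quantitative implication of the form ``$\epsilon_p(a)\le M$ with $M$ small and $g_{0}(a)\gg M^{1/2}$ $\Rightarrow$ $\epsilon_p(a)\le C\ln p/p$'', uniformly in $a\le\rho_p$. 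You have not done this, and it is precisely near $\rho_p$ (where $g_0\sim\ln p/p$) that the margin evaporates.

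The paper bypasses the bootstrap entirely. It lets $\Delta_p$ be defined by $g_{0,C}(\sqrt2-\Delta_p)=0$ (so $\Delta_p\to0$), writes $|\nabla u_p|(\sqrt2-\tfrac23\Delta_p)=1-\delta_p$, and uses the second--order equation \eqref{eq:75} directly: if $\delta_p$ were large, then $|\nabla u_p|^{2-p}$ would be huge on $[\sqrt2-\tfrac23\Delta_p,\sqrt2-\tfrac13\Delta_p]$, forcing $f_p''$ to be very negative there, which after one integration contradicts the boundedness of $f_p'$. This yields $(1-\delta_p)^p\ge C\Delta_p^2/p^{5/2}$, hence $\delta_p\le C\ln p/p$, hence $\epsilon_p(\sqrt2-\tfrac23\Delta_p)\le C\ln p/p$; positivity of $g$ together with \eqref{eq:56} then forces $\Delta_p\le C(\ln p/p)^{1/2}$. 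No continuation is needed because the ODE furnishes the lower bound on $|\nabla u_p|$ in one shot.
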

\begin{proof}
Let
\begin{displaymath}
  g_{0,C}= \tilde{C}_0 -\frac{1}{2}r^2 + \frac{1}{8}r^4\,.
\end{displaymath}
Suppose that $\tilde{C}_0 $ is such that $
g_{0,C}(\sqrt{2}-\Delta_p)=0$. From the previous lemma we have that
$\Delta_p\to0$ as $p\to\infty$. It is easy to show that,
\begin{displaymath}
  g_{0,C}\Big(\sqrt{2}-\frac{2}{3}\Delta_p\Big)\leq -C\Delta_p^2 \,,
\end{displaymath}
for all $C<1/6$ and for sufficiently large $p$.

Let 
\begin{displaymath}
  |\nabla u_p|\Big(\sqrt{2}-\frac{2}{3}\Delta_p\Big) = 1-\delta_p \,.
\end{displaymath}
Since $f_p/r\leq 1/ \sqrt{2}$, we have $f_p^\prime \big(\sqrt{2}-2\Delta_p/3\big)\geq 1-C\delta_p$. From here
it is easy to show that $\epsilon_p (\sqrt{2}-2\Delta_p/3)\leq C\delta_p$. 
By \eqref{eq:75} we have
\begin{equation}
\label{eq:76}
  f_p^{\prime\prime} \leq  -\frac{C}{p^2}|1-\delta_p|^{2-p}(1-\frac{r}{\sqrt{2}}) +
    Cp^{1/2} \quad \forall r\in[\sqrt{2}-2\Delta_p/3,\sqrt{2}-\Delta_p/3] \,, 
\end{equation}
where we have taken into account the fact that $|\nabla u_p|$ is decreasing
and that
\begin{displaymath}
  \frac{1+\frac{p-2}{|\nabla
     u_p|^2}\frac{f_p'f_p}{r}}{1+\frac{p-2}{|\nabla
     u_p|^2}|f_p'|^2} \leq C p^{1/2} \,.
\end{displaymath}
Integrating \eqref{eq:76} over $[\sqrt{2}-2\Delta_p/3,\sqrt{2}-\Delta_p/3]$
yields
\begin{displaymath}
  -\frac{1}{\sqrt{2}} \leq -C\frac{\Delta_p^2}{p^2}|1-\delta_p|^{-p} + Cp^{1/2}\Delta_p \,,
\end{displaymath}
from which we obtain
\begin{displaymath}
  (1-\delta_p)^p \geq C\frac{\Delta_p^2}{p^{5/2}} \,.
\end{displaymath}
Consequently,
\begin{displaymath}
  \delta_p \leq \frac{5}{2} \frac{\ln p}{p} - 2 \frac{\ln \Delta_p}{p} +
  \frac{C}{p} \,.
\end{displaymath}
We conclude from here that
\begin{equation}
\label{eq:77}
  \epsilon_p(\sqrt{2}-2\Delta_p/3) \leq C \delta_p \leq C \frac{\ln p}{p} \,.
\end{equation}
Since $g$ is positive we obtain by \eqref{eq:56} that
\begin{displaymath}
  \Delta_p \leq C \Big[\frac{\ln p}{p}\Big]^{1/2}\,.
\end{displaymath}
Since $\epsilon_p(a)$ is an increasing function of $a$ \eqref{eq:52} must be
valid in $[0,\sqrt{2}-2\Delta_p/3]$.
\end{proof}

\begin{proof}[Proof of Theorem  \ref{thplarge}]
In view of proposition
\ref{prop-extend} there exists
$C>0$ such that (\ref{eq:48}a) and hence \eqref{eq:78} hold for sufficiently large $p$
whenever $r<\sqrt{2}-C(\ln p/p)^{1/2}$.  From the monotonicity of $f_p$ it follows
that
\begin{displaymath}
  f_p(\sqrt{2}-C/(\ln p/p)^{1/2})\leq f_p(r) \leq 1 \,.
\end{displaymath}
\end{proof}

\section{Stability of the radial solution}
\label{sec:4}
In this section we prove our main stability result for
$u_p=f_p(r)e^{i\theta}$,  the degree one radially symmetric
  solution of
  \begin{equation}
    \frac{p}{2}\nabla\cdot(|\nabla u|^{p-2}\nabla u) + u(1-|u|^2)=0 \,.
  \end{equation}
 A simple computation gives the second variation of
  $E_p$ at $u_p$:
\begin{multline}
   \label{eq:100}
 J_2(\phi) = \int_{\R^2} \Bigg\{ \frac{p}{2}|\nabla u_p|^{p-2}\bigg[ |\nabla\phi|^2 +
   (p-2) \frac{|\Re(\nabla u_p\cdot\nabla\bar{\phi})|^2}{|\nabla u_p|^2} \bigg]\\ +
   2|\Re(u_p\bar{\phi})|^2 - (1-|u_p|^2)|\phi|^2 \Bigg\} \,.
 \end{multline}
 Because of \eqref{eq:100} and analogously to \cite{deetal04}, we
 consider perturbations in the ``natural'' Hilbert space ${\cal H}$
 consisting of functions $\phi\in H^1_{\text{loc}}(\R^2,\R^2)$ for which
 \begin{multline*}
   \int_{\R^2} \int_{\R^2} \Bigg\{ \frac{p}{2}|\nabla u_p|^{p-2}\bigg[ |\nabla\phi|^2 +
   (p-2) \frac{|\Re(\nabla u_p\cdot\nabla\bar{\phi})|^2}{|\nabla u_p|^2} \bigg]\\ +
   2|\Re(u_p\bar{\phi})|^2 +(1-|u_p|^2)|\phi|^2 \Bigg\}<\infty \,.
 \end{multline*}
Note that ${\cal H}$ contains all  ``admissible perturbations'' $\phi$,
i.e., any $\phi$ for which
$E_p(u_p+\phi)<\infty$. 
 Note also that in contrast with the case $p=2$, in our case $p>2$, constant
 functions do belong to ${\cal H}$. Thanks to  the invariance of
 the functional $E_p$ with respect to rotations and translations
 (see \cite{ovsi97}) we have
 \begin{equation}
 \label{eq:13}
  J_2(\phi)=0 \text{ for } \phi= \begin{cases} \frac{\partial u_p}{\partial\theta}=if_pe^{i\theta}\,,\\
 \frac{\partial u_p}{\partial x_1}=\frac{1}{2}(f_p'-\frac{f_p}{r})e^{2i\theta}+\frac{1}{2}(f_p'+\frac{f_p}{r})\,,\\
\frac{\partial u_p}{\partial
  x_2}=-\frac{i}{2}(f_p'-\frac{f_p}{r})e^{2i\theta}+\frac{i}{2}(f_p'+\frac{f_p}{r})\,.
\end{cases}
 \end{equation}
 Indeed, this leads to
 the equality cases in the next theorem.
\begin{theorem}
\label{th:stable}
 For every  $2<p\leq4$ the radially symmetric solution $u_p$ is stable in
 the sense that $J_2(\phi)\geq 0$ for all $\phi\in {\cal H}$. Moreover, we have
 $J_2(\phi)=0$ if and only if 
 \begin{equation}
   \label{eq:96}
\phi=c_0\frac{\partial u_p}{\partial\theta}+c_1\frac{\partial
   u_p}{\partial x_1}+c_2\frac{\partial u_p}{\partial x_2},\text{ for some constants $c_0,c_1,c_2\in\R$.}
 \end{equation}
\end{theorem}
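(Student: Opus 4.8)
\emph{Outline of the argument.} The plan is to diagonalize the quadratic form $J_2$ by a gauge change followed by Fourier analysis in the angle, in the spirit of Mironescu \cite{mi95} and del~Pino--Felmer--Kowalczyk \cite{deetal04}. Since $u_p=f_pe^{i\theta}$ and $|\nabla u_p|^2=|f_p'|^2+f_p^2/r^2$ depends on $r$ only, write an arbitrary $\phi\in\mathcal H$ as $\phi=e^{i\theta}\psi$, $\psi\colon\R^2\to\C$, and expand $\psi=\sum_{n\in\Z}\psi_n(r)e^{in\theta}$. A direct computation shows that all three parts of the integrand in \eqref{eq:100} diagonalize over $n$; separating each complex coefficient into real and imaginary parts, mode $n$ decouples further into two copies of a real quadratic form $R_n(u,v)$ on pairs $(u,v)$ of real functions of $r$ with weight $r\,dr$, of the form
\begin{multline*}
R_n(u,v)=2\pi\int_0^\infty\Big\{\tfrac p2|\nabla u_p|^{p-2}\big[|u'|^2+|v'|^2+\tfrac1{r^2}\big((nu-v)^2+(u-nv)^2\big)\big]\\
+\tfrac p2(p-2)|\nabla u_p|^{p-4}\big(f_p'u'+\tfrac{f_p}{r^2}(u-nv)\big)^2+2f_p^2u^2-(1-f_p^2)(u^2+v^2)\Big\}r\,dr\,.
\end{multline*}
Hence $J_2(\phi)=\sum_nR_n(\cdot)$, and it suffices to prove $R_n\ge0$ for all $n$ and to identify the null spaces. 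The invariances \eqref{eq:13} translate into distinguished null vectors: $\partial u_p/\partial\theta$ lives in the mode $n=0$ ($u\equiv0$, $v\equiv f_p$), while $\partial u_p/\partial x_1$ and $\partial u_p/\partial x_2$ occupy the two blocks of the modes $n=\pm1$, each with null vector $(u,v)=(\pm f_p',f_p/r)$. One checks, using $f_p'(0)>0$ (\rprop{prop:at-zero}) near the origin and $1-f_p^2\sim\tfrac p2\,r^{-p}$ (\rlemma{le5}) at infinity, that every $R_n$ is finite on $\mathcal H$.

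For $n=0$ the form splits into a $u$-part and a $v$-part. The $v$-part, $2\pi\int_0^\infty\{\tfrac p2|\nabla u_p|^{p-2}(|v'|^2+v^2/r^2)-(1-f_p^2)v^2\}r\,dr$, equals $2\pi\int_0^\infty v(\mathcal{L}v)\,r\,dr$ after an integration by parts, where $\mathcal{L}$ is a Schr\"odinger-type operator, and by \eqref{eq:30} the strictly positive function $f_p$ satisfies $\mathcal{L}f_p=0$; the ground-state substitution $v=f_p\xi$ then yields $2\pi\int_0^\infty\tfrac p2|\nabla u_p|^{p-2}f_p^2(\xi')^2\,r\,dr\ge0$, with equality iff $v\in\R f_p$. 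The $u$-part has the same structure with two extra nonnegative terms, $2f_p^2u^2$ and the anisotropic term, hence is positive definite since $f_p>0$. For $|n|\ge2$ the elementary bound $(nu-v)^2+(u-nv)^2\ge(|n|-1)^2(u^2+v^2)\ge u^2+v^2$ shows $R_n$ dominates the sum of two copies of the $n=0$ $v$-form plus $2f_p^2u^2$, so $R_n$ is positive definite. This leaves only $n=\pm1$.

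The heart of the matter is the mode $|n|=1$, and this is where the restriction $p\le4$ is used. Since the null vector $(f_p',f_p/r)$ is not of one sign, the plain ground-state substitution is unavailable; instead one makes a change of variables adapted to it (expressing $u$ and $v$ through $f_p'$ and $f_p/r$ times new unknowns) and uses \eqref{eq:30} together with the monotonicity and concavity of $f_p$ (\rprop{le2a}, \rcor{cor:f''<0}) and the bounds $-\tfrac1{p-1}<h<1$, $h'\le0$ for $h=rf_p'/f_p$ (\rprop{prop:at-zero}, \rlemma{lem:monotonicity2}) to rewrite $R_1$, after integration by parts, as a sum of manifestly nonnegative squares plus one remainder term whose pointwise coefficient is a rational function of $h$ and $p$. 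The main obstacle is to verify that this coefficient is $\ge0$ for all admissible $h\in(-\tfrac1{p-1},1)$ exactly when $2<p\le4$; this reduces to an elementary but delicate one-variable inequality (morally the condition $p-2\le2$ that lets the anisotropic term absorb the cross terms), and its failure for $p>4$ is why the theorem is stated only in that range. The same computation shows $R_1$ vanishes only on $\R\,(f_p',f_p/r)$.

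Finally, the equality case is read off mode by mode: if $J_2(\phi)=0$ then each $R_n$ vanishes on the corresponding component of $\phi$, so the modes $|n|\ge2$ and the $u$-part of $n=0$ vanish, the $v$-part of $n=0$ is a real multiple of $f_p$, and the two blocks of the modes $n=\pm1$ lie on the lines through $(\pm f_p',f_p/r)$. Unwinding $\phi=e^{i\theta}\psi$ and comparing with \eqref{eq:13}, this is precisely \eqref{eq:96}.
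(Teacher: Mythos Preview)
Your Fourier decomposition after the gauge change $\phi=e^{i\theta}\psi$ is equivalent to the paper's (your mode $n$ corresponds to the paper's pairing of $\phi_{n+1}$ with $\phi_{1-n}$), and your treatment of the easy modes---$n=0$ via the ground-state substitution $v=f_p\xi$, and $|n|\ge2$ via the pointwise inequality $(nu-v)^2+(u-nv)^2\ge u^2+v^2$ together with dropping the nonnegative anisotropic term---is correct and matches the paper's Lemma~\ref{lem:E1} and Proposition~\ref{prop:E3}.

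The gap is in the hard mode $|n|=1$ (the paper's $n=2$). First, your stated premise is wrong: both $f_p'$ and $f_p/r$ are strictly positive on $(0,\infty)$ by \rprop{le2a} and \rlemma{le1a}, so the null pair \emph{is} of one sign. The real obstruction is different: the anisotropic term produces a cross-derivative contribution of the type $A'B$ (in the paper's variables $A=\phi_0+\phi_2$, $B=\phi_0-\phi_2$), and a system Picone identity does not apply directly to a form containing such a term. The paper's mechanism, which your outline does not identify, has two parts. First (Lemma~\ref{lem:G2}), one subtracts a \emph{specific} perfect square---chosen so that it vanishes identically on the null pair $(f_p/r,f_p')$---to obtain a reduced form $G_2(A,B)=\int(\alpha (A')^2+\beta (B')^2+aA^2+2bAB+cB^2)$ with \emph{no} $A'B$ or $AB'$ terms, and for which $(f_p/r,f_p')$ still solves the Euler--Lagrange system. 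Second, one applies Picone's identity componentwise against the positive functions $f_p/r$ and $f_p'$; this yields $G_2(A,B)\ge\int(-b)\big(A\sqrt{\psi/\phi}-B\sqrt{\phi/\psi}\big)^2\ge0$, provided $\alpha,\beta>0$ and $b<0$. The restriction $p\le4$ enters precisely through
\[
\alpha(r)=\tfrac{p}{4}\,|\nabla u_p|^{p-2}\,r\Big(1-(p-2)\,\tfrac{h^2}{1+h^2}\Big),
\]
which is positive for all $h\in(0,1)$ if and only if $p\le4$.

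So you correctly locate where $p\le4$ is used, but the description ``sum of manifestly nonnegative squares plus one remainder term whose pointwise coefficient is a rational function of $h$ and $p$'' does not capture the actual argument and is not a proof. The essential point you are missing is that the square to be split off is not arbitrary: it must be chosen so that the null pair remains a critical point of the \emph{reduced} form, and the condition $p\le4$ is not about a leftover remainder coefficient but about the positivity of the leading coefficient $\alpha$ that Picone's identity requires.
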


Following \cite{mi95} we represent each $\phi$ by its Fourier expansion
\begin{equation}
\label{eq:95}
  \phi=\sum_{n=-\infty}^\infty \phi_n(r)e^{in\theta} \,.
\end{equation}
Substituting into \eqref{eq:100} we obtain
\begin{equation}
\label{eq:22}
  \frac{1}{2\pi} J_2(\phi) = E_1(\phi_1) + \sum_{n=2}^\infty  E_n(\phi_n,\phi_{2-n}) \,,
\end{equation}
in which
\begin{subequations}
  \begin{multline}
\label{eq:79}
    E_1(\phi_1) = \int_0^\infty \Bigg\{\frac{p}{2} |\nabla u_p|^{p-2} \bigg[ |\phi_1^\prime|^2 +
    \frac{1}{r^2}|\phi_1|^2 + (p-2) \frac{\big|\Re\big(f_p^\prime\phi_1^\prime
      +\frac{f_p\phi_1}{r^2}\big)\big|^2} {|\nabla u_p|^2} \bigg] \\
+  2f_p^2 |\Re \phi_1|^2 - (1-f_p^2)|\phi_1|^2 \Bigg\} rdr \,,
\end{multline}
and
\begin{multline}
\label{eq:80}
    E_n(\phi_n,\phi_{2-n}) = \int_0^\infty \Bigg\{\frac{p}{2} |\nabla u_p|^{p-2} \bigg[
    |\phi_n^\prime|^2 + |\phi_{2-n}^\prime|^2+
    \frac{n^2}{r^2}|\phi_n|^2+ \frac{(2-n)^2}{r^2}|\phi_{2-n}|^2
    + \\ \frac{1}{2}(p-2) \frac{\big|f_p^\prime(\bar{\phi}_n^\prime+\phi_{2-n}^\prime)
      +\frac{f_p}{r^2}(n\bar{\phi}_n+(2-n)\phi_{2-n})\big|^2} {|\nabla u_p|^2} \bigg] \\
+  f_p^2 | (\bar{\phi}_n+ \phi_{2-n})|^2 - (1-f_p^2)(|\phi_n|^2 + |\phi_{2-n}|^2) \Bigg\} rdr \,.
\end{multline}
\end{subequations}

A necessary and sufficient condition for the positive definiteness of
$J_2$ is that the $E_n$'s are all positive definite. An appropriate
Hilbert space for the study of the functionals $\{E_n\}$ is 
\begin{displaymath}
  {\cal S} = \{ \phi\in H^1_{loc}(\R_+,\C)\cap L^2_r(\R_+,\C)\,:\, \int_0^\infty \frac{p}{2} |\nabla u_p|^{p-2} \Big[ |\phi^\prime|^2 +
    \frac{1}{r^2}|\phi|^2\Big]  \,rdr<\infty \} \,.
\end{displaymath}
We also denote by $\tilde{\cal S}$ the space of real-valued functions
  in ${\cal S}$.
\subsection{$n\neq2$}
\label{sec:4.1}
We consider first the case $n=1$.
\begin{lemma}
\label{lem:E1}
  \begin{equation}
    \label{eq:101}
\inf_{\phi\in{\cal S} } E_1(\phi)= 0 \,.
  \end{equation}
Furthermore, the minimum in \eqref{eq:101} is attained only for
$\phi=cif_p$, for any real constant  $c$.
\end{lemma}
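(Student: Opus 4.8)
The plan is to decouple $E_1$ into its real and imaginary parts, treat the imaginary part by a ground--state (Hardy--type) substitution built on $f_p$ itself, and dominate the real part by the imaginary one. Writing $\phi_1=\rho+i\sigma$ with $\rho,\sigma$ real and using that $f_p,f_p'$ are real (so $\Re(f_p'\phi_1'+\tfrac{f_p\phi_1}{r^2})=f_p'\rho'+\tfrac{f_p\rho}{r^2}$ and $|\Re\phi_1|^2=\rho^2$), the expression \eqref{eq:79} splits with no cross terms as $E_1(\phi_1)=A(\rho)+B(\sigma)$, where
\[
B(\sigma)=\int_0^\infty\Big\{\tfrac p2|\nabla u_p|^{p-2}\big(\sigma'^2+\tfrac{\sigma^2}{r^2}\big)-(1-f_p^2)\sigma^2\Big\}r\,dr,
\]
and $A(\rho)$ is this same functional evaluated on $\rho$ plus the two manifestly nonnegative extra terms $\frac p2(p-2)|\nabla u_p|^{p-4}\big(f_p'\rho'+\tfrac{f_p\rho}{r^2}\big)^2 r$ and $2f_p^2\rho^2 r$. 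The upper bound $\inf_{\cal S}E_1\le 0$ is free: by \eqref{eq:13}, $E_1(if_p)=\tfrac1{2\pi}J_2(\partial u_p/\partial\theta)=0$, finiteness of $E_1(if_p)$ being clear from $I_p(f_p)<\infty$ together with $1-f_p^2\sim\frac p2 r^{-p}$ (\rlemma{le5}).

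For the imaginary part, the key observation is that $f_p$ is itself a positive solution of the Euler--Lagrange equation associated with $B$: multiplying \eqref{eq:30} by $\frac p2 r$ turns it exactly into $-(af_p')'+Wf_p=0$, where $a=\frac p2|\nabla u_p|^{p-2}r$ and $W=\frac p2|\nabla u_p|^{p-2}r^{-1}-(1-f_p^2)r$ are the coefficients in $B(\sigma)=\int_0^\infty(a\sigma'^2+W\sigma^2)\,dr$. I would therefore substitute $\sigma=f_p\psi$; expanding $a\sigma'^2+W\sigma^2$, writing $2af_pf_p'\psi\psi'=af_pf_p'(\psi^2)'$, integrating by parts, and using $(af_pf_p')'=(af_p')'f_p+af_p'^2=Wf_p^2+af_p'^2$ collapses everything to
\[
B(\sigma)=\int_0^\infty a f_p^2\,(\psi')^2\,dr+\big[a f_p f_p'\psi^2\big]_0^\infty .
\]
The boundary term vanishes: at infinity $af_pf_p'=O(r^{2-2p})$ by the asymptotics of \rlemma{le5}, while near $0$ one has $af_pf_p'=O(r^2)$ against $\psi^2=(\sigma/f_p)^2$, which grows slower than $r^{-2}$ for $\sigma$ in the form domain (there $\int_0^1 a\sigma^2/r^2\,dr<\infty$ forces decay of $\sigma$ at $0$, while $f_p\sim f_p'(0)r$). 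Hence $B(\sigma)\ge 0$, with equality iff $\psi'\equiv0$, i.e.\ $\sigma$ is a constant multiple of $f_p$.

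For the real part, note that pointwise in $r$ the integrand of $A(\rho)-B(\rho)$ is $\{\frac p2(p-2)|\nabla u_p|^{p-4}(f_p'\rho'+\tfrac{f_p\rho}{r^2})^2+2f_p^2\rho^2\}r\ge0$ as soon as $p\ge2$ (recall $|\nabla u_p|>0$ on $(0,\infty)$). Thus $A(\rho)\ge B(\rho)\ge0$, and $A(\rho)=0$ forces both $A(\rho)-B(\rho)=0$ — hence $f_p\rho\equiv0$, i.e.\ $\rho\equiv0$ since $f_p>0$ on $(0,\infty)$ by \rlemma{le1a} — and $B(\rho)=0$. Consequently $E_1(\phi_1)=A(\rho)+B(\sigma)\ge0$, with equality precisely when $\rho\equiv0$ and $\sigma=cf_p$, i.e.\ $\phi_1=cif_p$; together with $E_1(if_p)=0$ this gives \eqref{eq:101} and the stated characterization. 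Note that this argument uses only $p>2$, so the restriction $p\le4$ in \rth{th:stable} plays no role for the $n=1$ mode.

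The mathematical content here is light; the delicate part is functional-analytic. One must be precise about the space over which the minimization is understood — $if_p\notin L^2_r(\R_+)$, so $if_p$ is a minimizer in the natural form domain of $E_1$ rather than in ${\cal S}$ itself — and one must justify the substitution $\sigma=f_p\psi$ for a general $\sigma$ in that domain: that $\psi=\sigma/f_p$ is weakly differentiable with $af_p^2(\psi')^2$ integrable, and that the boundary terms genuinely vanish, the subtle endpoint being $r=0$ where $f_p$ degenerates linearly. The clean way to organize this is to first prove the substitution identity for $\sigma\in C_c^\infty(0,\infty)$, where everything is elementary, and then pass to the general case by a density argument in the $E_1$-form norm.
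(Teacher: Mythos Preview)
Your argument is correct and follows essentially the same route as the paper: reduce to the real-valued functional $\tilde E_1$ (your $B$) by noting that the extra terms carried by the real part are nonnegative, perform the ground-state substitution $\sigma=f_p\psi$ and integrate by parts against \eqref{eq:30} to obtain $\int_0^\infty\frac p2|\nabla u_p|^{p-2}f_p^2(\psi')^2\,r\,dr$, then pass from $C_c^\infty(0,\infty)$ to the full space by density. Your observation that $if_p\notin L^2_r(\R_+)$, hence $if_p\notin{\cal S}$ as literally defined, is a genuine imprecision in the paper's formulation that the authors do not address.
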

\begin{proof}
Since $E_1(i|\phi|) \leq E_1(\phi)$ for every $\phi$ for which $E_1(\phi)<\infty$, with
strict inequality unless $\phi$ takes only purely imaginary values, we may
consider instead of $E_1$ the following functional
\begin{displaymath}
  \tilde{E}_1(\phi) =  \int_0^\infty \bigg\{\frac{p}{2} |\nabla u_p|^{p-2} \Big[ |\phi^\prime|^2 +
    \frac{1}{r^2}|\phi|^2\Big] - (1-f_p^2)|\phi|^2 \bigg\} rdr \,,
\end{displaymath}
over $\tilde{\cal S}$.
  Consider first $\phi\in C_c^\infty(0,\infty)$  and set $  \phi = f_pw $.
Integration by parts, with the aid of \eqref{eq:30} yields
  \begin{equation}
  \label{eq:102}
F_1(w) = \tilde{E}_1(f_pw) = \int_0^\infty \frac{p}{2} |\nabla u_p|^{p-2} f_p^2|w^\prime|^2 rdr \,.
\end{equation}
 A standard use of cut-off functions yields that \eqref{eq:102} holds
 also for smooth $\phi=f_pw$ with compact support in $[0,\infty)$ (i.e, the
 support  may
 contain the origin). Finally, by density of smooth maps with compact
 support in $[0,\infty)$  in $\tilde{\cal S}$ it follows that
 \eqref{eq:102} continues to hold for $\phi=f_pw\in\tilde{\cal S}$. Therefore, 
  $\tilde{E}_1(\phi)\geq 0$ for all $\phi\in \tilde{\cal S}$ and 
   $F_1(w)=0$ if and only if $w\equiv\text{const}$.
\end{proof}

\noindent We now consider the case $n\geq3$. 
\begin{proposition}
\label{prop:E3}
 For each $n\geq 3$ we have
  \begin{displaymath}
   E_n(u_1,u_2)> 0 \text{ for all } (u_1,u_2)\in \tilde{\cal
      S}\times\tilde{\cal S}\setminus \{(0,0)\}\,.
\end{displaymath}
\end{proposition}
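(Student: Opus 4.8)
The plan is to show that each term in the integrand of $E_n$ in \eqref{eq:80} is, after suitable manipulation, manifestly nonnegative when $n\geq 3$, with strict positivity unless $(u_1,u_2)=(0,0)$. The decisive structural fact is that for $n\geq 3$ the ``angular'' coefficients $n^2$ and $(2-n)^2$ are both $\geq 1$, so the full gradient-type contribution $\frac{p}{2}|\nabla u_p|^{p-2}\big[|u_1'|^2+|u_2'|^2+\frac{n^2}{r^2}|u_1|^2+\frac{(2-n)^2}{r^2}|u_2|^2\big]$ dominates the same expression with $n^2,(2-n)^2$ replaced by $1$. First I would therefore bound $E_n$ from below by the functional obtained by this replacement, and then compare with the $n=1$ situation: the reduced quadratic form splits as a sum of two copies of the one-component form $\int_0^\infty \frac{p}{2}|\nabla u_p|^{p-2}\big[|v'|^2+\frac{1}{r^2}|v|^2\big] - (1-f_p^2)|v|^2\,rdr$ (dropping the manifestly nonnegative $(p-2)$-term and the $f_p^2|\bar\phi_n+\phi_{2-n}|^2$ term, both of which only help), applied to $u_1$ and to $u_2$ separately. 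By \rlemma{lem:E1} this reduced form is $\tilde E_1(u_1)+\tilde E_1(u_2)\geq 0$.

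Next I would recover strict positivity. The substitution $v=f_pw$ from the proof of \rlemma{lem:E1} gives, for each component, $\tilde E_1(f_pw)=\int_0^\infty \frac{p}{2}|\nabla u_p|^{p-2}f_p^2|w'|^2\,rdr$, which vanishes only when $w$ is constant, i.e. when $v$ is a constant multiple of $f_p$. So if $E_n(u_1,u_2)=0$ then necessarily $u_1=c_1 f_p$ and $u_2=c_2 f_p$ with $c_1,c_2\in\R$; but then I must also have equality in the discarded terms. The term $\frac{p}{2}|\nabla u_p|^{p-2}\big[(n^2-1)\frac{|u_1|^2}{r^2}+((2-n)^2-1)\frac{|u_2|^2}{r^2}\big]$ is strictly positive unless $u_1\equiv u_2\equiv 0$, since for $n\geq 3$ both $n^2-1>0$ and $(2-n)^2-1>0$ (note $(2-n)^2\geq 1$ with equality only at $n=1,3$, and $n^2-1\geq 8$); hence $c_1 f_p\equiv 0$ and $c_2 f_p\equiv 0$, forcing $u_1=u_2=0$ because $f_p>0$ on $(0,\infty)$ by \rlemma{le1a}. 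This establishes strict positivity on $\tilde{\cal S}\times\tilde{\cal S}\setminus\{(0,0)\}$.

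One point needs care: the appearance of $\bar\phi_n$ (complex conjugate) in \eqref{eq:80} versus the restriction to \emph{real-valued} functions $(u_1,u_2)\in\tilde{\cal S}\times\tilde{\cal S}$, where the conjugation is irrelevant; this is exactly why the proposition is stated over $\tilde{\cal S}$, and I would simply note that on real-valued inputs the mixed term becomes $f_p^2(u_1+u_2)^2\geq 0$ and can be dropped in the lower bound. Another routine point is the passage from $C_c^\infty(0,\infty)$ to all of $\tilde{\cal S}$ (including behavior at $r=0$ and $r=\infty$): this is handled exactly as in \rlemma{lem:E1}, by cutoff functions and density, and I would invoke that argument rather than repeat it. I do not expect a genuine obstacle here; the only mildly delicate step is making sure the boundary terms in the integration by parts leading to $\tilde E_1(f_pw)$ vanish for functions in $\tilde{\cal S}$ (using $f_p(0)=0$, the growth of $f_p$, and the decay encoded in finiteness of the $\cal S$-norm together with \rlemma{le5}), but this is already subsumed in the proof of \rlemma{lem:E1}.
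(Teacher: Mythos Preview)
Your approach is essentially the paper's own: drop the nonnegative $(p-2)$-term and the term $f_p^2(u_1+u_2)^2$, reduce the angular coefficients $n^2,(2-n)^2$ to $1$, and invoke \rlemma{lem:E1} to get $E_n(u_1,u_2)\ge \tilde E_1(u_1)+\tilde E_1(u_2)\ge 0$. The paper's proof is exactly this inequality (stated with $|u_j|$ in place of $u_j$, which is immaterial for real-valued inputs).

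There is one small slip in your strictness argument. You assert that for $n\ge 3$ one has $(2-n)^2-1>0$, but your own parenthetical correctly notes that $(2-n)^2=1$ when $n=3$; so at $n=3$ the angular excess alone forces only $u_1\equiv 0$, not $u_2\equiv 0$. The fix is immediate and uses a term you already discarded: once $u_1\equiv 0$, the dropped nonnegative term $f_p^2(u_1+u_2)^2=f_p^2u_2^2$ must also vanish for equality, hence $u_2\equiv 0$ since $f_p>0$ on $(0,\infty)$. (Alternatively, your intermediate conclusion $u_2=c_2 f_p\in\tilde{\cal S}$ already forces $c_2=0$, because $f_p\to 1$ implies $f_p\notin L^2_r(\R_+)$.) With this correction your proof is complete and matches the paper's.
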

\begin{proof}
  The result follows right away from the previous lemma  and the inequality
  \begin{displaymath}
    E_n(u_1,u_2) \geq \tilde{E}_1( |u_1| ) + \tilde{E}_1( |u_2| )\,,
  \end{displaymath}
 with strict inequality, unless $u_j\equiv 0,\,j=1,2$.
\end{proof}

\subsection{$n=2$}
\label{sec:4.2}
It is easy to reduce the analysis of $E_2$ to that of a functional
acting on real-valued functions.  Indeed,  writing  a complex-valued function $\phi$ as
$\phi=\phi^R+i\phi^I$, we have
\begin{displaymath}
  E_2(\phi_2,\phi_0) = E_2^R(\phi_2^R,\phi_0^R) + E_2^I(\phi_2^I,\phi_0^I) \,, 
\end{displaymath}
where
\begin{displaymath}
  E_2^R(\phi_2^R,\phi_0^R) = E_2(\phi_2^R,\phi_0^R), \quad E_2^I(\phi_2^I,\phi_0^I) =
  E_2(i\phi_2^I,i\phi_0^I) \,.
\end{displaymath}
Clearly,
\begin{displaymath}
  E_2(i\phi_2^I,i\phi_0^I)=E_2^R(-\phi_2^I,\phi_0^I) \,.
\end{displaymath}
Hence, 
\begin{equation}
  \label{eq:85}
 E_2(\phi_2,\phi_0)=E_2^I(-\phi_2^R,\phi_0^R) +E_2^I(\phi_2^I,\phi_0^I)\,,
\end{equation}
 and it suffices to study the minimization to  the functional $E_2^I$ over $\tilde{\cal S}\times \tilde{\cal S}$.

From \eqref{eq:13} and \eqref{eq:22} it follows that  the functions
\begin{equation}
\label{eq:86}
    \Phi_0 = f_p^\prime + \frac{f_p}{r}\mbox{ and }\Phi_2 = - f_p^\prime + \frac{f_p}{r}\,,
\end{equation}
 satisfy
\begin{displaymath}
E_2^R(-\Phi_2,\Phi_0)=E_2^I(\Phi_2,\Phi_0)=0\,.
\end{displaymath}
 We next claim:
 \begin{proposition}
 \label{prop:E2}
   For $p\in(2,4]$ we have $E_2^I(\phi_2,\phi_0)\geq 0$ for every $(\phi_2,\phi_0)\in\tilde{\cal S}\times
   \tilde{\cal S}$ with equality if and only if $(\phi_2,\phi_0)=c(\Phi_2,\Phi_0)$
   for some $c\in\R$ (see \eqref{eq:86}).
 \end{proposition}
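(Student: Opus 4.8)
The plan is to work throughout with the real-valued functional $E_2^I$, to which the study of $E_2$ was reduced in \eqref{eq:85}, and to organize the proof around the zero mode $(\Phi_2,\Phi_0)$ of \eqref{eq:86} coming from translation invariance (see \eqref{eq:13}). Note first that both components are strictly positive on $(0,\infty)$: $\Phi_0=f_p'+f_p/r>0$ is trivial, and $\Phi_2=f_p/r-f_p'>0$ follows from $h=rf_p'/f_p<1$ (Step~1 of the proof of \rprop{prop:at-zero}). By differentiating \eqref{eq:30} once in $r$ one checks directly that $(\Phi_2,\Phi_0)$ solves the Euler--Lagrange system associated with the quadratic form $E_2^I$, consistently with $E_2^I(\Phi_2,\Phi_0)=0$. (Unlike the cases $n\neq2$ handled above, here the centrifugal term $(2-n)^2/r^2$ degenerates to zero, so there is no reduction to $\tilde{E}_1$ and a genuinely two-component argument is required; note also that the second slot $\phi_0$ of $E_2^I$ ranges over the natural class in which no decay of $\phi_0$ at the origin is imposed, so that $(\Phi_2,\Phi_0)$ indeed lies in the admissible class.)

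The main step is a vector ``ground-state substitution.'' I would set $\phi_2=\Phi_2 v_2$, $\phi_0=\Phi_0 v_0$ with new real functions $v_2,v_0$, initially for $\phi_2,\phi_0$ smooth and compactly supported in $(0,\infty)$, substitute into $E_2^I$, and integrate by parts, using the linearized equations for $(\Phi_2,\Phi_0)$ to cancel every term that is of zeroth order in $(v_2,v_0)$ when $v_2,v_0$ are constant. I expect this to produce an identity of the schematic form
$$E_2^I(\Phi_2 v_2,\Phi_0 v_0)=\int_0^\infty\!\Big\{\tfrac{p}{2}|\nabla u_p|^{p-2}\big(\Phi_2^2(v_2')^2+\Phi_0^2(v_0')^2\big)+\tfrac{p(p-2)}{4}|\nabla u_p|^{p-4}(f_p')^2\big(\Phi_0v_0'-\Phi_2v_2'\big)^2+R_p(r)\,\Psi(v_2,v_0)\Big\}\,r\,dr,$$
in which the terms other than $R_p\,\Psi$ form a sum of squares with nonnegative weights (hence are $\ge0$ for every $p>2$), $\Psi$ is a fixed quadratic expression vanishing on the zero mode $v_2\equiv v_0\equiv\mathrm{const}$, and $R_p$ is an explicit weight. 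Positivity of $E_2^I$ then reduces to the pointwise inequality $R_p(r)\,\Psi(v_2,v_0)\ge0$, and this is exactly where the hypothesis $2<p\le4$ enters: the sign of the decisive coefficient of $R_p$ is controlled by a factor of type $4-p$, and comes out right only for $p\le4$. I expect the careful bookkeeping of the integration by parts --- identifying $\Psi$ and $R_p$ explicitly and then verifying this sign --- to be the main obstacle; it will draw on \eqref{eq:30}, the monotonicity $f_p'>0$ (\rprop{le2a}), the concavity $f_p''<0$ (\rcor{cor:f''<0}) and the bound $h<1$.

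Granting the identity, $E_2^I\ge0$ is immediate. For the equality statement, $E_2^I(\Phi_2 v_2,\Phi_0 v_0)=0$ forces $v_2'\equiv v_0'\equiv0$ (from the $|\nabla u_p|^{p-2}$ term, since $|\nabla u_p|^{p-2},\Phi_2^2,\Phi_0^2>0$ a.e.\ on $(0,\infty)$) together with $R_p\,\Psi\equiv0$; since $R_p>0$ a.e.\ this yields $v_2\equiv v_0\equiv c$, i.e.\ $(\phi_2,\phi_0)=c(\Phi_2,\Phi_0)$, which is the assertion.

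It remains to remove the compact-support restriction. Using the behavior $f_p(r)\sim f_p'(0)r$ as $r\to0$ (\rprop{prop:at-zero}) and $1-f_p^2\sim\frac{p}{2}r^{-p}$, $f_p'\sim\frac{p^2}{4}r^{-p-1}$ as $r\to\infty$ (\rlemma{le5}), a cut-off argument --- parallel to the density/cut-off step in the proof of \rlemma{lem:E1} --- shows that the boundary terms generated by the integration by parts vanish in the limit and that the identity, hence the conclusion $E_2^I\ge0$ with the stated equality cases, extends to the full admissible class.
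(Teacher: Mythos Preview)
Your overall strategy---a Picone/ground-state substitution built on the positive zero mode $(\Phi_2,\Phi_0)$---is the right instinct, and it is what the paper ultimately uses. But your proposal leaves precisely the decisive step undone, and your direct substitution $\phi_2=\Phi_2 v_2$, $\phi_0=\Phi_0 v_0$ into $E_2^I$ is not how the paper proceeds; as written, it is not clear your route reaches the goal.

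The difficulty is the off-diagonal kinetic term in $E_2^I$: the factor $\frac{p-2}{2}|\nabla u_p|^{-2}\big(f_p'(\phi_0'-\phi_2')-2\frac{f_p}{r^2}\phi_2\big)^2$ couples $\phi_0'$ and $\phi_2'$. The scalar Picone identity (which you invoke implicitly) relies on a \emph{diagonal} second-order part; with a genuine $\phi_0'\phi_2'$ term present, the substitution $\phi_j=\Phi_j v_j$ followed by integration by parts does not automatically reduce to a sum of squares plus a single signed remainder $R_p\Psi$. Your ``schematic identity'' is just a guess, and you yourself flag ``identifying $\Psi$ and $R_p$ explicitly and verifying the sign'' as the main obstacle---that is the whole proof.

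The paper handles this by an extra step you are missing. First it passes to the variables $A=\phi_0+\phi_2$, $B=\phi_0-\phi_2$, so that the zero mode becomes $(f_p/r,\,f_p')$. Then, and this is the key point, it proves an \emph{algebraic} identity (\rlemma{lem:G2}) splitting
\[
F_2(A,B)=G_2(A,B)+\int_0^\infty \frac{p(p-2)}{4}\,|\nabla u_p|^{p-2}\,\frac{\big(hA'-\tfrac{1}{r}(h^2A-B)\big)^2}{1+h^2}\,r\,dr,
\]
where the second integral is a manifest square and $G_2$ has \emph{diagonal} kinetic part $\alpha(r)(A')^2+\beta(r)(B')^2$ with no $A'B'$ term. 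The role of this splitting is exactly to absorb the troublesome cross term. Only after this does the paper apply the scalar Picone identity separately in each slot of $G_2$, using that $(f_p/r,f_p')$ solves the Euler--Lagrange system for $G_2$ and that the off-diagonal potential coefficient $b(r)$ is negative. The restriction $p\le 4$ enters not through any ``$4-p$'' factor in a remainder, but because
\[
\alpha(r)=\tfrac{p}{4}|\nabla u_p|^{p-2}r\Big(1-(p-2)\tfrac{h^2}{1+h^2}\Big)
\]
must be positive; since $h\to 1$ as $r\to0$, this forces $(p-2)\le 2$.

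So either you must carry out your direct substitution in full and exhibit the claimed identity (which, because of the off-diagonal kinetic coupling, is not a routine Picone computation), or you should insert the paper's missing ingredient: the change of variables $(A,B)$ and the algebraic decomposition of \rlemma{lem:G2} that diagonalizes the principal part before the ground-state argument is applied.
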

For the proof of \rprop{prop:E2} we shall need some preliminary
results. First,  by \eqref{eq:80} we have
\begin{multline}
    E_2^I(\phi_2,\phi_0) = \int_0^\infty \Bigg\{\frac{p}{2} |\nabla u_p|^{p-2} \bigg[
    (\phi_2^\prime)^2 + (\phi_0^\prime)^2 +  \frac{4}{r^2}(\phi_2)^2
    \\ + \frac{1}{2}(p-2) \frac{\big(f_p^\prime(\phi_0^\prime-\phi_2^\prime)
      - 2 \frac{f_p}{r^2}\phi_2)\big)^2} {|\nabla u_p|^2} \bigg] \\
+  f_p^2 (\phi_0- \phi_2)^2 - (1-f_p^2)\big((\phi_2)^2 + (\phi_0)^2\big) \Bigg\} rdr \,.
\end{multline}
It is more convenient to consider an alternative form by applying the transformation
\begin{displaymath}
  A= \phi_0 + \phi_2, \quad B=\phi_0-\phi_2,
\end{displaymath}
to obtain
\begin{multline}
  \label{eq:103}
  E_2^I(\phi_0,\phi_2) = F_2(A,B) := \int_0^\infty \Bigg\{\frac{p}{4} |\nabla u_p|^{p-2} \bigg[
    (A^\prime)^2 + (B^\prime)^2 +  \frac{2}{r^2}(A-B)^2
    \\ ~+ (p-2) \frac{\big(f_p^\prime B^\prime 
      - \frac{f_p}{r^2}(A-B)\big)^2} {|\nabla u_p|^2} \bigg] \\
+  f_p^2 B^2 - \frac{1}{2}(1-f_p^2)(A^2 + B^2) \Bigg\} rdr \,.
\end{multline}
Clearly,
\begin{equation}
\label{eq:81}
  F_2(f_p/r,f_p^\prime) =0\,.
\end{equation}
 The ``problematic term'' in \eqref{eq:103} is the one involving the
 mixed product $AB'$. The difficulty in handling this term is the
 obstacle for determining the positivity of $F_2$ for every $p>2$. We
 were able to overcome this  difficulty only in the case $p\in(2,4]$ thanks to the
 following lemma. 
\begin{lemma}
\label{lem:G2}
 We have 
 \begin{equation}
\label{eq:84}
F_2(A,B)=G_2(A,B)+\int_0^\infty \frac{p(p-2)}{4} |\nabla u_p|^{p-2}
    \frac{\big(hA^\prime -\frac{1}{r}(h^2A-B)\big)^2} {1+h^2} rdr   \,,
\end{equation}
 with
\begin{multline}
\label{eq:98}
G_2(A,B) = \int_0^\infty \Bigg\{\frac{p}{4} |\nabla u_p|^{p-2} \bigg[
    (A^\prime)^2 + (B^\prime)^2 +  \frac{2}{r^2}(A-B)^2
    \\ + (p-2)
    \frac{h^2((B^\prime)^2-(A^\prime)^2) +
      \frac{1-h^4}{r^2}A^2 -\frac{2}{r^2}(1-h^2)AB}  {1+h^2} \Bigg] \\
    + \frac{p(p-2)}{4r}[H^\prime(2AB- B^2) -(h^2H)^\prime A^2]  \\
+  f_p^2 B^2 - \frac{1}{2}(1-f_p^2)(A^2 +
  B^2) \Bigg\} rdr\,,
  \end{multline}
where
\begin{displaymath}
  H= \frac{h}{1+h^2}|\nabla u_p|^{p-2} ~\text{ and }~ h=rf_p^\prime/f_p\text{ (as in \eqref{eq:1000})}\,.
\end{displaymath}
Moreover, $G_2(f_p/r,f_p')=0$ and the pair $(f_p/r,f_p')$ solves the
Euler-Lagrange equations associated with $G_2$.
\end{lemma}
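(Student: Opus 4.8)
The plan is to establish \eqref{eq:84} as a pointwise algebraic identity followed by a single integration by parts, proved first for pairs $(A,B)$ that are smooth with compact support in $[0,\infty)$ and then extended to $\tilde{\cal S}\times\tilde{\cal S}$ by density. The starting point is to rewrite the one term of $F_2$ that couples $B'$ with the undifferentiated $A$ --- the ``problematic'' term singled out above --- in terms of $h$. Since $f_p'=hf_p/r$, and hence $|\nabla u_p|^2=\frac{f_p^2}{r^2}(1+h^2)$, one has
\begin{equation*}
  (p-2)\,\frac{\big(f_p'B'-\frac{f_p}{r^2}(A-B)\big)^2}{|\nabla u_p|^2}
  =(p-2)\,\frac{\big(hB'-\frac1r(A-B)\big)^2}{1+h^2}\,,
\end{equation*}
so that all of $F_2$ is expressed through $h$, $|\nabla u_p|^{p-2}$ and $f_p$ alone.

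Next I would verify the pointwise identity
\begin{multline*}
  \frac{\big(hB'-\frac1r(A-B)\big)^2}{1+h^2}
  =\frac{\big(hA'-\frac1r(h^2A-B)\big)^2}{1+h^2}
  +\frac{h^2\big((B')^2-(A')^2\big)+\frac{1-h^4}{r^2}A^2-\frac{2(1-h^2)}{r^2}AB}{1+h^2}\\
  +\frac{h}{r(1+h^2)}\Big[h^2(A^2)'+(B^2)'-2(AB)'\Big]\,,
\end{multline*}
obtained by expanding the two squares on the right, collecting the cross terms through $A'B+AB'=(AB)'$, $2BB'=(B^2)'$, $2h^2AA'=h^2(A^2)'$, and checking that what remains, after removing the ``new square'' $\big(hA'-\frac1r(h^2A-B)\big)^2/(1+h^2)$, is exactly the displayed collection of undifferentiated terms together with the total-derivative bracket. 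Multiplying this identity by $\frac{p(p-2)}{4}|\nabla u_p|^{p-2}$ and integrating with weight $r\,dr$ turns its left-hand side into precisely the problematic $(p-2)$-term of $F_2$, while on the right the first term gives the residual integral in \eqref{eq:84}, the second gives the $(p-2)$-contribution inside the bracket of \eqref{eq:98}, and the third, using $\frac{h}{1+h^2}|\nabla u_p|^{p-2}=H$, becomes $\int_0^\infty\frac{p(p-2)}{4}H\big[h^2(A^2)'+(B^2)'-2(AB)'\big]\,dr$, which after integration by parts equals $\int_0^\infty\frac{p(p-2)}{4r}\big[H'(2AB-B^2)-(h^2H)'A^2\big]r\,dr$ --- exactly the extra term of \eqref{eq:98}. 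Since all the remaining terms of $F_2$ and of $G_2$ coincide term by term, this proves \eqref{eq:84} for compactly supported $(A,B)$. The boundary terms in the integration by parts vanish because $H$ is bounded near $r=0$ (recall $h\to1$ there) and decays at infinity by \rlemma{le5}; combined with $f_p\in C^\infty(0,\infty)$ (Proposition~\ref{prop:regularity}) and the resulting integrability of $H$ and $(h^2H)'$ at both ends, the density of $C_c^\infty([0,\infty))$ in $\tilde{\cal S}$ then upgrades \eqref{eq:84} to all of $\tilde{\cal S}\times\tilde{\cal S}$.

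For the last two assertions, note that at $(A,B)=(f_p/r,f_p')$ one has $A'=\frac{f_p}{r^2}(h-1)$ and $h^2A-B=\frac{hf_p}{r}(h-1)$, so $hA'-\frac1r(h^2A-B)\equiv 0$; hence the residual integral in \eqref{eq:84} vanishes at $(f_p/r,f_p')$, and together with $F_2(f_p/r,f_p')=0$ (see \eqref{eq:81}) this gives $G_2(f_p/r,f_p')=0$. Finally, $(f_p/r,f_p')$ is a critical point of $F_2$ --- up to the reductions made above it is the $n=2$ Fourier component of the translation field $\partial u_p/\partial x_1$, which solves the linearized (Jacobi) equation of $J_2$, or one checks this directly using \eqref{eq:30} --- and the residual integral, being a non-negative quadratic functional whose integrand vanishes identically at $(f_p/r,f_p')$, has vanishing first variation there. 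Therefore $(f_p/r,f_p')$ is a critical point of $G_2=F_2-(\text{residual})$, i.e.\ it solves the Euler-Lagrange system associated with $G_2$.

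The step I expect to be the main obstacle is the bookkeeping in the algebraic identity: one must carry the $(p-2)$ factors and the weights $|\nabla u_p|^{p-2}$, $(1+h^2)^{-1}$ correctly through the expansion and make sure the cross terms regroup precisely into the total-derivative bracket that, after integration by parts, produces the exact $H'$- and $(h^2H)'$-terms appearing in \eqref{eq:98}. A secondary technical point is the justification of the integration by parts and of the limiting argument in $\tilde{\cal S}\times\tilde{\cal S}$, for which the behavior of $h$ at $0$ and $\infty$ together with the asymptotics in \rlemma{le5} are exactly what is needed.
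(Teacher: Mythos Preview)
Your proposal is correct and follows essentially the same route as the paper: the paper too establishes the pointwise algebraic identity (your displayed identity is exactly the paper's \eqref{eq:82}, since $h^2(A^2)'+(B^2)'-2(AB)'=-2[(AB)'-BB'-h^2AA']$), then integrates by parts to produce the $H'$- and $(h^2H)'$-terms, and finally observes that the residual integrand vanishes identically at $(f_p/r,f_p')$, from which both $G_2(f_p/r,f_p')=0$ and the Euler--Lagrange assertion follow. Your added remarks on boundary terms, density in $\tilde{\cal S}$, and the first-variation argument for the Euler--Lagrange system are more explicit than the paper's treatment but do not depart from it.
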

\begin{proof}
   First, a direct computation gives the identity
   \begin{multline}
\label{eq:82}
  \frac{(f_p^\prime B^\prime 
      - \frac{f_p}{r^2}(A-B))^2} {|\nabla u_p|^2} \\=
 \frac{h^2(|B^\prime|^2-|A^\prime|^2) -\frac{2h}{r}[(AB)^\prime - BB^\prime -h^2AA^\prime]
     + \frac{1-h^4}{r^2}|A|^2 -\frac{2}{r^2}(1-h^2)AB}  {1+h^2}\\
+\frac{(hA^\prime -\frac{1}{r}(h^2A-B))^2} {1+h^2}\,.
   \end{multline}
Next, integration by parts yields
\begin{multline}
\label{eq:83}
  \int_0^\infty  |\nabla u_p|^{p-2} \Big\{ \frac{ -\frac{2h}{r}[(AB)^\prime - BB^\prime -h^2AA^\prime]}
   {1+h^2}\Big\}r\,dr\\=
 \int_0^\infty \Big\{H^\prime(2AB- B^2) -(h^2H)^\prime A^2 \Big\}\,dr
\end{multline}
Using \eqref{eq:82}--\eqref{eq:83} in conjunction with \eqref{eq:103}
leads to \eqref{eq:84}--\eqref{eq:98}. Finally, a direct computation
shows that the integrand in the integral on the right-hand-side of
\eqref{eq:84} is identically zero for $A=f_p/r$ and $B=f_p'$, and the
last assertion of the lemma follows.
\end{proof}
\begin{proof}[Proof of \rprop{prop:E2}] In view of \rlemma{lem:G2} it
  suffices to show that
  \begin{multline}
    \label{eq:87}
 G_2(u,v)\geq 0,\,\forall(u,v)\in\tilde{\cal S}\times\tilde{\cal
   S},\text{ with equality iff: }\\u=\phi:=f_p/r\text{ and }v=\psi:=f_p'\,.
  \end{multline}
 We write $G_2$ in the form
  \begin{equation*}
 G(u,v)=\int_0^\infty \big(\alpha(r)u'^2 +\beta(r)v'^2+a(r)u^2+2b(r)uv+c(r)v^2\big)\,dr\,.
\end{equation*}
 The properties of the coefficients which are important to us are
 \begin{equation}
   \label{eq:89}
\alpha(r),\beta(r)>0\text{ and }b(r)<0,~\text{ for }r>0\,.
 \end{equation}
Indeed, clearly $\beta(r)>0$. Next,
\begin{equation*}
  \alpha(r)=\frac{p}{4} |\nabla u_p|^{p-2} r\Big(1-(p-2)\frac{h^2}{1+h^2}\Big)>0\,,
\end{equation*}
 provided $p\leq 4$, since $0<h<1$ by Step~1 of \rprop{prop:at-zero} and \rprop{le2a}.
Finally, 
\begin{equation*}
   b=r\Big\{ \frac{p}{4}|\nabla u_0|^{p-2}\big[-\frac{2}{r^2}-\frac{(p-2)}{r^2}(1-h^2)\big]+\frac{p(p-2)}{4r}H'\Big\}<0\,,
 \end{equation*}
since $0<h(r)<1$,  and 
\begin{equation*}
  H'=|\nabla
  u_p|^{p-2}\frac{(1-h^2)h'}{(1+h^2)^2}+(p-2)\frac{h}{1+h^2}\Big(\big(\frac{f_p}{r}\big)
\big(\frac{f_p}{r}\big)'+f_p'f_p''\Big)<0\,,
\end{equation*}
since $h'\leq 0$ by \rlemma{lem:monotonicity2} and both $f_p^\prime$ and
$f_p/r$ are decreasing (as we noted already before, by \rlemma{lem:monotonicity2} and Step~2
of the proof of  \rprop{prop:at-zero}).

 By \rlemma{lem:G2} we know that $\phi$ and $\psi$ satisfy
 \begin{equation}
\label{eq:88}
  \left\{ 
 \begin{aligned}
   -(\alpha\phi')'+a\phi+b\psi&=0,\\
   -(\beta\psi')'+c\psi+b\phi&=0.
\end{aligned}
\right.
 \end{equation}
\end{proof}
 We consider first $u,v\in C^\infty_c(0,\infty)$. By Picone's identity
\begin{align}
  (u')^2-(\frac{u^2}{\phi})'\phi'&=(u'-(u/\phi)\phi')^2\geq 0 \label{eq:90}\\
 (v')^2-(\frac{v^2}{\psi})'\psi'&=(v'-(v/\psi)\psi')^2\geq 0\,.\label{eq:92}
\end{align}
Multiplying  \eqref{eq:90}--\eqref{eq:92} by $\alpha$ and $\beta$ respectively,  
 applying integration by parts and using \eqref{eq:88} we obtain
\begin{equation}
\label{eq:104}
\begin{aligned}
  0&\leq \int_0^\infty \alpha(u')^2-\alpha(\frac{u^2}{\phi})'\phi'+\beta(v')^2-\beta(\frac{v^2}{\psi})'\\
&=\int_0^\infty
  \alpha(u')^2+\frac{u^2}{\phi}(a\phi+b\psi)+\beta(v')^2+\frac{v^2}{\psi}(c\psi+b\phi)\\
&=\int_0^\infty \alpha u'^2 +\beta v'^2+au^2+cv^2+b(u^2\frac{\psi}{\phi}+v^2\frac{\phi}{\psi})\\
&=G(u,v)+\int_0^\infty b\Big(u\big(\frac{\psi}{\phi}\big)^{1/2}-v\big(\frac{\phi}{\psi}\big)^{1/2}\Big)^2\,.
\end{aligned}
\end{equation}
From \eqref{eq:104} and a density argument we conclude that
\begin{equation*}
 G(u,v)\geq \int_0^\infty
 (-b)\Big(u\big(\frac{\psi}{\phi}\big)^{1/2}-v\big(\frac{\phi}{\psi}\big)^{1/2}\Big)^2\,,~\forall u,v\in\tilde{\cal S}\,,
\end{equation*}
and \eqref{eq:87} follows.

Next we are ready to present the proof of our main stability theorem.
\begin{proof}[Proof of \rth{th:stable}] Representing each $\phi\in{\cal H}$
  by its Fourier expansion \eqref{eq:95}, we have by \eqref{eq:22},
  \rlemma{lem:E1}, \rprop{prop:E3}, \eqref{eq:85} and \rprop{prop:E2}
  that  $J_2(\phi)\geq 0$. Furthermore, by the equality cases in
  \rlemma{lem:E1}, \rprop{prop:E3} and \rprop{prop:E2} we have
  $J_2(\phi)=0$ iff $\phi=\phi_0+\phi_1e^{i\theta}+\phi_2e^{2i\theta}$  where 
  \begin{equation*}
    \phi_1=a_1if_p\,,\quad (\phi_2^I,\phi_0^I)=a_2(\Phi_2,\Phi_0)\,\text{ and }
    (-\phi_2^R,\phi_0^R)=a_3(\Phi_2,\Phi_0)\,,\quad\text{ with }a_1,a_2,a_3\in\R\,.
  \end{equation*}
  It is easy to verify that these relations
  are equivalent to \eqref{eq:96}.
  \end{proof}
\bibliography{pGL}

\end{document}